\numberwithin{equation}{section} 
\newtheorem{define}{Definition}[section]
\newtheorem{proposition}{Proposition}[section]
\newtheorem{theorem}{Theorem}
\newtheorem{remark}{Remark}[section]
\newtheorem{lemma}{Lemma}[section]
\newcommand{\E}{\mathbb{E}}
\newcommand{\Proba}{\mathbb{P}}
\newcommand\tld[1]{\widetilde{#1}}
\newcommand\myhat[1]{\overline{#1}}
\newcommand*\myref[1]{{\normalfont (\ref{#1})}}
\newcommand*\mycite[1]{{\normalfont \cite{#1}}}
\title{Existence of martingale solutions for stochastic flocking models with local alignment}
\date{}
\author{Arnaud Debussche, Angelo Rosello {\vspace{2mm}} \\
\textit{
Univ Rennes, CNRS, IRMAR - UMR 6625, F- 35000 Rennes, France }
}
\begin{document}

\maketitle

\begin{abstract}
We establish the existence of martingale solutions to a class of stochastic conservation equations. The underlying models correspond to random perturbations of kinetic models for collective motion such as the Cucker-Smale \cite{cs1,cs2} and Motsch-Tadmor \cite{MT} models. By regularizing the coefficients, we first construct approximate solutions obtained as the mean-field limit of the corresponding particle systems. We then establish the compactness in law of this family of solutions by relying on a stochastic averaging lemma. This extends the results obtained in \cite{KMT1, KMT2} in the deterministic case.
~~\\ ~~\\
\textbf{Keywords:} Stochastic partial differential equations, mean-field limit, collective motion.
\end{abstract}

\vspace{10mm}

\tableofcontents


\clearpage

\begin{section}{Introduction, main results}

\begin{subsection}{Collective motion with local alignment}

The emergence of a consensus or ordered motion amongst a population of interacting agents has been drawing a fair amount of attention among the scientific community in recent years. This phenomenon, consistently observed in nature, from schooling fish to swarming bacteria, is usually referred to as flocking. One of the earliest and most commonly studied mathematical models describing this kind of behavior is the celebrated Cucker-Smale model, introduced in \cite{cs1, cs2}.
In this model, agents interact in a mean-field manner: for $1 \le i \le N$, denoting by $X^{i,N}, V^{i,N} \in \mathbb{R}^d$ the position and velocity of the $i$-th individual, the evolution of the system is given by
\begin{equation*}
\left\{
\begin{array}{l}
\displaystyle{ \frac{d}{dt} } X^{i,N}_t = V^{i,N}_t,
\\
\displaystyle{ \frac{d}{dt} } V^{i,N}_t = \frac{1}{N} \sum_{j=1}^N \psi(X^{i,N}_t - X^{j,N}_t) (V_t^{j,N} - V^{i,N}_t),
\end{array}
\right.
\end{equation*}
where the weight function $\psi : \mathbb{R}^d \to \mathbb{R}^+$ is even, typically of the form
\begin{align*}
\psi(x-y) = \frac{\lambda}{(1 + |x-y|^2)^\gamma}, \hspace{5mm} \lambda, \gamma > 0.
\end{align*}
Equivalently, one may consider the conservation equation
\begin{align}
\partial_t f + v \cdot \nabla_x f + \nabla_v \cdot ( L^{CS}[f] f ) = 0
\label{CS_PDE}
\end{align}
where the Cucker-Smale alignment term $L^{CS}[f]$ is given by the convolution
\begin{align}
L^{CS}[f](x,v) = \int_{\mathbb{R}^d \times \mathbb{R}^d} \psi(x-y)(w-v) f(y,w) dy dw.
\label{chap3-cucker_smale}
\end{align}
Equation \myref{CS_PDE} is naturally associated to the particle system, since it is satisfied by the empirical measure 
$$\mu^N_t = \frac{1}{N} \sum_{i=1}^N \delta_{(X_t^{i,N}, V_t^{i,N})}$$ 
in the sense of distributions. In \cite{MT}, Motsch and Tadmor brought to light several drawbacks regarding the physical relevance of the Cucker-Smale model when confronted to strongly non-homegeneous distributions of agents, due to the normalizing constant $\frac{1}{N}$ in the alignment force, which involves the whole group of individuals. To remedy these issues, they proposed a new model where the influence between two agents is normalized by the total influence:
\begin{equation*}
\left\{
\begin{array}{l}
\displaystyle{ \frac{d}{dt} } X^{i,N}_t = V^{i,N}_t,
\\
\displaystyle{ \frac{d}{dt} } V^{i,N}_t = \frac{1}{\sum_{j=1}^N \phi(X^{i,N}_t - X^{j,N}_t)} \sum_{j=1}^N \phi(X^{i,N}_t - X^{j,N}_t) (V_t^{j,N} - V^{i,N}_t).
\end{array}
\right.
\end{equation*}
Considering some weight function $\phi : \mathbb{R}^d \to \mathbb{R}^+$ with compact support, we may naturally consider a hybrid model, letting the Cucker-Smale forcing dictate the long-range interaction and the Motsch-Tadmor term dictate the short-range interaction:
\begin{align}
\partial_t f + v \cdot \nabla_x f + \nabla_v \cdot ( (L^{CS}[f] + L^{MT}[f] ) f ) = 0
\label{MT_PDE}
\end{align}
where the $L^{MT}[f]$ is given by
\begin{align}
L^{MT}[f](x,v) =  \frac{ 
\displaystyle{
\int_{\mathbb{R}^d \times \mathbb{R}^d} \phi(x-y) (w-v) \, f(y,w) dy dw } }
{ \displaystyle{ \int_{\mathbb{R}^d \times \mathbb{R}^d} \phi(x-y) f(y,w) dy dw }  } 
.
\label{motsch_tadmor}
\end{align}
Note that the Motsch-Tadmor forcing term can also be written 
$$L^{MT}[f](x,v) = u[f](x) - v,$$
expressing the alignment of the speed with the local average velocity $u[f]$, defined as
\begin{align*}
& u[f](x) = \frac{ 
\displaystyle{
\int_{\mathbb{R}^d \times \mathbb{R}^d} \phi(x-y) w \, f(y,w) dydw } }
{ \displaystyle{ \int_{\mathbb{R}^d \times \mathbb{R}^d} \phi(x-y) f(y,w) dy dw }  }.
\end{align*}
As suggested in \cite{KMT1}, we may also consider \myref{MT_PDE} in the singular limit where the weight function $\phi$ governing the short-range interaction converges to the Dirac function $\delta_0$, leading to
\begin{align}
\partial_t f + v \cdot \nabla_x f + \nabla_v \cdot ( (L^{CS}[f] + L^{SLA}[f] ) f ) = 0.
\label{SLA_PDE}
\end{align}
In \myref{SLA_PDE}, the Strong Local Alignment term $L^{SLA}[f]$ is given by
\begin{align}
L^{SLA}[f](x,v) = u_0[f](x) - v,
\label{sla}
\end{align}
where the local velocity $u_0[f]$ is given by
\begin{align*}
u_0[f](x) = \displaystyle{ \frac{ \int_{\mathbb{R}^d} w f(x,w) dw}{\int_{\mathbb{R}^d} f(x,w) dw   }}.
\end{align*}
The existence of solutions to the kinetic equations \myref{MT_PDE} and \myref{SLA_PDE} has been established in \cite{KMT1}. Moreover, in \cite{KMT2}, the authors rigorously explore the limit $\phi \to \delta_0$: considering some $\phi_1 \in C_c(\mathbb{R}^d)$ and weight functions of the form $\phi^r(x) = r^{-d} \phi_1(x/r)$, solutions $(f^r)_{r \ge 0}$ of \myref{MT_PDE} converge (up to some subsequence) to a solution $f$ of \myref{SLA_PDE}.

In order to take into account random phenomena emerging from the environment, or unpredictable interactions between the agents, it is rather natural to perturb the deterministic equations \myref{MT_PDE} and \myref{SLA_PDE} with some noise, driven by a Wiener process $dW(z) = \sum_k K_k[f](z) d\beta^k_t$, leading to the stochastic conservation equation
$$
d f_t + \Big[ v \cdot \nabla_x f_t + \nabla_v \cdot ( (L[f_t] f_t ) \Big] dt + 
\sum_k \nabla_v \cdot ( K_k[f_t] f_t) \circ d\beta_t^k
= 0
$$
with $L[f] = L^{CS}[f] + L^{MT}[f]$ or $L[f] = L^{CS}[f] + L^{SLA}[f]$.
Here, for simplicity purposes, we choose to only consider a "one-dimensional" noise driven by a real valued Brownian motion $\beta = (\beta_t)_{t \ge 0}$, leading to equations
\begin{align}
d f_t + \Big[ v \cdot \nabla_x f_t + \nabla_v \cdot ( (L^{CS}[f_t] + L^{MT}[f_t] ) f_t ) \Big] dt + 
\nabla_v \cdot ( K[f_t] f_t) \circ d\beta_t
= 0
\label{chap3-spde}
\end{align}
and
\begin{align}
d f_t + \Big[ v \cdot \nabla_x f_t + \nabla_v \cdot ( (L^{CS}[f_t] + L^{SLA}[f_t] ) f_t ) \Big] dt + 
\nabla_v \cdot ( K[f_t] f_t) \circ d\beta_t
= 0.
\label{spde2}
\end{align}
The methods developed in the present paper shall not rely on this particular form, so that the results may be easily generalized to SPDEs with multiple Brownian motions.
Note that these stochastic conservation equations are written in Stratonovitch form, since it is the most physically relevant form. As in the deterministic case, equation  \myref{chap3-spde} is naturally associated to the stochastic particle system
\begin{equation}
\left\{
\begin{array}{l}
dX^{i,N}_t = V^{i,N}_t dt,
\\
dV^{i,N}_t = L[\mu^N_t] (X^{i,N}_t, V^{i,N}_t) dt + K[\mu^N_t](X^{i,N}_t, V^{i,N}_t) \circ d\beta_t
\end{array}
\right.
\label{sto_particle}
\end{equation}
where $\mu^N_t = \frac{1}{N} \sum_{i=1}^N \delta_{(X^{i,N}_t , V^{i,N}_t)}$ and $L[\mu] = L^{CS}[\mu] + L^{MT}[\mu]$.
\vspace{3mm}

The mean-field convergence of \myref{sto_particle} to the corresponding limiting SPDE has been studied in the litterature in the case of the Cucker-Smale interaction, that is with $L[\mu] = L^{CS}[\mu]$. The diffusion coefficient
 $K[\mu](x,v)~=~D(v-v_c)$ for some constant $v_c \in \mathbb{R}^d$ is considered in \cite{ahn_mi_ha} ; the coefficient $K[\mu](x,v) = \sqrt{2\sigma}(v - \bar v)$, where $\bar v = \int w d\mu(y,w)$, is looked upon in \cite{choi-salem} and \cite{jung-ha} ; some more general (non linear) diffusion coefficients are considered in \cite{rosello}.
 
As for the Motsch-Tadmor model, that is for $L[\mu] = L^{MT}[\mu]$, the flocking phenomenon for the particle system \myref{sto_particle} (alignment of speeds, distance between the individuals bounded over time) is studied in \cite{liu} in the case of a multiplicative noise $K[\mu](x,v) = Dv$.
However, even in the deterministic case, due to the singular ratio involved in the non-linear term $L^{MT}[\mu]$, the mean-field limit of the Motsch-Tadmor particle system is a very delicate question (as suggested in \mycite{MT} and \mycite{consensus}) to which the authors could not find a proper answer in the litterature. It should also be noted that the strong local alignment term $L^{SLA}[f]$ given by \myref{sla} is ill-defined when $f$ is a general measure, so that the particle system associated with equation \myref{spde2} cannot in fact be written.
\vspace{3mm}

In the present work, we shall consider a diffusion coefficient of the form
\begin{align}
K[f](x,v) = F(x) + \int_{\mathbb{R}^d \times \mathbb{R}^d} \tld{\psi}(x-y) (w-v) f(y,w) dy dw
\label{K}
\end{align}
which corresponds to some random environmental forcing $F(x) \circ d \beta_t$, as well as a random perturbation $\psi \leftarrow \psi + \tld \psi \circ d\beta_t$ of the weight function involved in the Cucker-Smale alignment term \myref{chap3-cucker_smale} (as considered in \mycite{choi-salem} and \mycite{rosello} in the case of the Cucker-Smale model only). Note that the choice $F=0$ and $\tld \psi = \sqrt{2\sigma}$ leads in particular to the simpler coefficient $K[\mu](x,v) = \sqrt{2\sigma}(v-\bar v)$. The arguments developed in the present work also easily apply to the case $K[\mu](x,v) = Dv$ looked upon in \mycite{liu}.

In this paper, we extend the work developed in \cite{KMT1} and \cite{KMT2} in the deterministic case to establish the existence of martingale solutions (see Definition \ref{martingale_def} below) for the stochastic conservation equations \myref{chap3-spde} and \myref{spde2}. To this intent, we start by regularizing the coefficients: in section \ref{sec2}, we prove the existence of a unique solution of equation \myref{chap3-spde} with regularized coefficients, which is naturally constructed as the mean-field limit of the corresponding stochastic particle system. Then, in section \ref{sec3}, we prove the tightness of these approximate solutions with respect to the regularizing parameter, and rigorously pass to the limit in the martingale problem associated with \myref{chap3-spde}.

\end{subsection}

\begin{subsection}{Assumptions and main results}
The weight functions $\psi, \tld \psi : \mathbb{R}^d \to \mathbb{R}^+$ involved in \myref{chap3-cucker_smale} and \myref{K} are assumed to satisfy
\begin{align}
& |\psi(x)| + |\tld \psi(x)| \lesssim 1, \hspace{5mm} 
|\partial^\alpha_x \psi(x)| + |\partial^\alpha_x \tld \psi(x)| \lesssim 1 \text{ for $1 \le |\alpha| \le 4$}.
\label{psi}
\end{align}
The weight function $\phi : \mathbb{R}^d \to \mathbb{R}^+$ involved in \myref{motsch_tadmor} is assumed to be smooth and compactly supported around zero: $\phi \in C^\infty(\mathbb{R}^d)$ and for some $0 < r_1 < r_2 < \infty$,
\begin{align}
\inf_{x \in B(0,r_1)} \phi(x) > 0, \hspace{10mm} Supp(\phi) \subset B(0,r_2) .
\label{phi}
\end{align}
The forcing $F$ involved in \myref{K} is assumed to be smooth and sublinear:
\begin{align*}
|F(x)| \lesssim 1 + |x|.
\end{align*}
Simple calculations show that the proper Itô form associated with SPDE \myref{chap3-spde} is
\begin{align*}
df_t + \nabla_v \cdot \Big( \Big( L^{CS}[f_t] + L^{MT}[f_t] + S[f_t] \Big) f_t \Big) dt 
+ \nabla_v \cdot \left( K[f_t] f_t \right)  d\beta_t = \frac{1}{2} \nabla_v^2 (K[f_t]K[f_t]^T f_t) dt 
\end{align*}
where we have used the notation
$$
\nabla_v^2 (K[f] K[f]^T f) = \sum_{1 \le i, j \le d} \partial^2_{v_i v_j} \Big[ K[f]^i K[f]^j f \Big]
$$
and the additional drift forcing term $S[f]$ is given by
\begin{align*}
& S[f](z) = \frac{1}{2} \int_{\mathbb{R}^{2d}}
\tld \psi(x-y) \Big( K[f](y,w) - K[f](x,v) \Big) f(y,w) dydw.
\end{align*}
This motivates the following definition.

\begin{define} \label{martingale_def}
Let $T > 0$ and  $(\Omega, {\cal F}, ({\cal F}_t)_{0 \le t \le T}, \Proba, \beta)$ be a filtered probability space equipped with an $({\cal F}_t)$-brownian motion $\beta$. Let $f_0 : \mathbb{R}^{2d} \to \mathbb{R}_+$ with $\int_{\mathbb{R}^{2d}} f_0(z) dz = 1$. \\
A process $f \in L^\infty \Big( \Omega ; L^\infty([0,T] ; L^1(\mathbb{R}^{2d}))\Big)$ with
\begin{align}
\Big[\int_{\mathbb{R}^{2d}} | f(\omega)(t,z) | dz \le 1, \; \; dt\text{-a.e} \Big], \; \; \Proba\text{-a.s} 
\label{mart_est1}
\end{align}
satisfying the estimate
\begin{align}
\E \Big[ \int_0^T \int_{\mathbb{R}^{2d}} (1 + |v|^2) | f(t,z) |  dz dt \Big] < \infty
\label{mart_est2}
\end{align}
is said to be a solution of \myref{chap3-spde} on $(\Omega, \beta)$ (with initial value $f_0$) when, for any test function $\Psi \in C^\infty_c(\mathbb{R}^{2d})$, the process $(\langle f(t), \Psi \rangle)_{0 \le t \le T}$ is adapted with a continuous version and satisfies
$$
\langle f(t) , \Psi \rangle = \langle f_0, \Psi \rangle + \int_0^t \bigl< {\cal L}[f(s)] \Psi, f(s) \bigr> ds
+ \int_0^t \bigl< K[f(s)] \cdot \nabla_v \Psi, f(s) \bigr> d\beta_s, \; \; \; t \in [0,T]
$$
where ${\cal L}[f]$ denotes the second order operator
\begin{align}
{\cal L}[f] \Psi = v \cdot \nabla_x \Psi +  \Big( L^{CS}[f] + L^{MT}[f] + S[f] \Big) \cdot \nabla_v \Psi 
+ \frac{1}{2} \sum_{1 \le i, j \le d} K[f]^i K[f]^j \partial^2_{v_i v_j} \Psi.
\label{limit_operator}
\end{align}
If there exists some probability tuple $(\myhat \Omega, \myhat {\cal F}, (\myhat {\cal F}_t)_{0 \le t \le T}, \myhat \Proba, \myhat \beta)$ and a solution $\myhat f$ of \myref{chap3-spde} on $(\myhat \Omega, \myhat \beta)$, we say that equation $\myref{chap3-spde}$ has a martingale solution (in the sense of {\normalfont \cite{da_prato}},  Chapter 8).
\end{define}
Estimates \myref{mart_est1} and \myref{mart_est2} are quite natural since we expect solutions of \myref{chap3-spde} to be densities. On can easily check that these estimates guarantee that the process $(\langle f(t), \Psi \rangle)_{t \ge 0}$ is well defined, the stochastic integral being a square integrable martingale.
Solutions of equation \myref{spde2} are defined similarly. We now state our main results.

\begin{theorem}[Stochastic Motsch-Tadmor flocking] \label{chap3-thm1} ~~\\
Let $f_0 : \mathbb{R}^{2d} \to \mathbb{R}_+$ with $\int_{\mathbb{R}^{2d}} f_0(z) dz = 1$
such that, for some $\delta > 1$ and $\theta \in (0,1)$,
\begin{align}
\int_{\mathbb{R}^{2d}} |f_0(z)|^p dz + \int_{\mathbb{R}^{2d}} ( |x|^\delta + |v|^k ) f_0(z) dz < \infty,
\hspace{5mm} p = 1 + \frac{1}{\theta}, \; \; k > \frac{ \max(d+2,4) }{1- \theta}.
\label{hyp}
\end{align}
Then there exists a martingale solution $f$ of equation \myref{chap3-spde} with initial data $f_0$.
\end{theorem}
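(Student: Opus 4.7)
The plan is to follow the regularize-and-pass-to-the-limit strategy outlined in the introduction. First I would regularize the Motsch-Tadmor coefficient by replacing its denominator with $\int \phi(x-y) f(y,w)\,dy\,dw + \varepsilon$ (equivalently, mollifying the density at scale $\varepsilon$) and smoothing the diffusion coefficient $K$ in the same way, so that the resulting coefficients become globally Lipschitz bounded functionals of $f$. For every fixed $\varepsilon > 0$, the corresponding stochastic particle system \myref{sto_particle} then has global strong solutions; standard propagation-of-chaos arguments for the empirical measure, combined with the well-posedness of the It\^o SPDE with Lipschitz coefficients, produce an approximate solution $f^\varepsilon$ of the regularized equation. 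This is the content of section~\ref{sec2}.

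Next I would derive uniform-in-$\varepsilon$ a priori estimates on $f^\varepsilon$. Conservation of mass and the divergence-in-$v$ structure yield \myref{mart_est1}; applying It\^o's formula to the particle system (or directly to $f^\varepsilon$) with the weights $|x|^\delta$ and $|v|^k$ propagates the moment hypothesis \myref{hyp}, giving both \myref{mart_est2} and a uniform bound on $\E \int \!\!\int |v|^k f^\varepsilon$. The $L^p$ hypothesis transfers to $f^\varepsilon$ via an $L^p$ estimate on the conservation equation, since the drift is divergence-free up to a bounded correction coming from $\nabla_v \cdot L^{CS}[f^\varepsilon]$, $\nabla_v \cdot L^{MT}[f^\varepsilon]$ and $\nabla_v \cdot S[f^\varepsilon]$, and the It\^o--Stratonovich correction exactly compensates the $L^p$-growth of the stochastic term up to a Gronwall factor.

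The crucial step is tightness and the passage to the limit. Spatial compactness for $f^\varepsilon$ alone is not available (one only has an $L^p$ bound), so I would rely on the stochastic velocity averaging lemma promised in the abstract: starting from the It\^o form of the SPDE satisfied by $f^\varepsilon$, the uniform $L^\infty_t L^p_{x,v}$ bound together with the velocity moment of order $k$ yields, by interpolation, fractional Sobolev regularity in $x$ for averages $\int f^\varepsilon(x,v)\, \chi(v)\, dv$ uniformly in $\varepsilon$. The exponents in \myref{hyp}, namely $p = 1 + 1/\theta$ and $k > \max(d+2,4)/(1-\theta)$, are precisely those needed for this interpolation to produce positive Sobolev regularity. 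Kolmogorov's criterion applied to the martingale formulation provides the time regularity of $\langle f^\varepsilon, \Psi\rangle$, and together these estimates give tightness of the laws of $(f^\varepsilon, \beta)$ in a suitable Jakubowski space (weak topology in $(t,x,v)$, strong local topology for the velocity averages).

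Finally I would invoke the Jakubowski--Skorohod representation on a new probability space $(\myhat \Omega, \myhat{\cal F}, \myhat \Proba)$ to obtain a.s.\ converging copies $\myhat f^\varepsilon \to \myhat f$, $\myhat \beta^\varepsilon \to \myhat \beta$ and identify $\myhat f$ as a martingale solution in the sense of Definition~\ref{martingale_def}. The main obstacle is passing to the limit in the singular term $L^{MT}[f^\varepsilon] f^\varepsilon$: the ratio \myref{motsch_tadmor} is unstable under weak convergence and may degenerate where $\int \phi(x-y) f\, dy\, dw$ vanishes. This is where the averaging lemma is decisive, since it upgrades the weak convergence of $f^\varepsilon$ to strong local convergence of the numerator and denominator in \myref{motsch_tadmor}, which together with a lower-semicontinuity argument on $\{ \int \phi f > 0\}$ allows one to identify the limit. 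The Cucker-Smale drift $L^{CS}$, the It\^o--Stratonovich correction $S[f]$, and the diffusion $K[f]$ are comparatively easy since they depend linearly and regularizingly on $f$ and pass to the limit by straightforward weak convergence.
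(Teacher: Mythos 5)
Your proposal follows the same regularize--estimate--compactify--pass-to-the-limit strategy as the paper, with the same key ingredients: truncated/mollified coefficients with a small parameter added to the Motsch--Tadmor denominator, well-posedness via the mean-field limit of the particle system, uniform $L^p$ and moment estimates driven by the bounded-$\nabla_v$ structure of the drift and the cancellation in the It\^o correction, a stochastic velocity averaging lemma to obtain strong $L^2_{t,x}$ compactness of $\rho_\varphi$, Kolmogorov-type time-regularity estimates, Skorokhod representation, and identification of the limit martingale problem with the averaging lemma resolving the singular quotient on $\{\phi*\rho>0\}$.

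Two small clarifications worth keeping in mind. First, regularizing only the denominator is not enough: the Cucker--Smale and noise coefficients also need their velocity factor $(w-v)$ truncated (via $\theta_R$) so that they become globally bounded and Lipschitz in $z$ uniformly over probability measures, which is what makes the particle system well posed and the quantities $\nabla_v\!\cdot L^{CS}_R$, $\nabla_v\!\cdot K_R$, $\nabla_v\!\cdot S_R$ bounded uniformly in $R$ for the $L^p$ estimate. Second, $L^{CS}[f]f$, $S[f]f$ and $K[f]K[f]^Tf$ are respectively quadratic and cubic in $f$, not linear; passing them to the limit is not bare weak convergence but the same add-and-subtract decomposition with uniform-integrability control that is used for the Motsch--Tadmor term, only without the zero-denominator issue. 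Neither of these affects the soundness of your plan.
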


We also prove the existence of a martingale solution of \myref{spde2}, which can be constructed as a weak limit of solutions of \myref{chap3-spde} as the function $\phi$ involved in the Motsch-Tadmor alignment term \myref{motsch_tadmor} properly approaches the Dirac function $\delta_0$, similarly to the result established in \cite{KMT2}.

\begin{theorem}[Stochastic flocking with strong local alignment] \label{chap3-thm2} ~~\\
Let $\phi_1 \in C_c^\infty(\mathbb{R}^d)$ satisfy \myref{phi}. Let us consider the sequence of functions $(\phi_r)_{r > 0}$ given by
$$
\forall x \in \mathbb{R}^d, \; \; \; \phi_r(x) = r^d \phi_1(x/r).
$$
Assuming \myref{hyp}, the martingale solution $f^r$ of equation \myref{chap3-spde} with $\phi = \phi_r$ constructed in Theorem \ref{chap3-thm1} satisfies, along some subsequence $r_n \to 0$,
$$
f^{r_n} \to f \text{ in law}, \text{ in } C([0,T] ; H^{-\sigma}_{W^{-1}}(\mathbb{R}^{2d}))
\text{ for all $\sigma > 0$},
$$
where $f$ defines a martingale solution of \myref{spde2}. The weighted Sobolev space $H^{-\sigma}_{W^{-1}}(\mathbb{R}^{2d})$, with $W^{-1}(z) = (1 + |z|)^{-1}$ is introduced in \myref{weight2} below.
\end{theorem}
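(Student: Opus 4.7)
The approach is to mirror the deterministic singular-limit analysis of \mycite{KMT2}, combining it with the stochastic compactness machinery already deployed in the proof of Theorem \ref{chap3-thm1}. The starting point is the family $(f^r)_{r > 0}$ of martingale solutions of \myref{chap3-spde} with weight $\phi_r$, defined on the probability spaces produced by Theorem \ref{chap3-thm1}. The first task is to derive bounds on $f^r$ that are uniform as $r \to 0$: the mass bound \myref{mart_est1} and the energy bound \myref{mart_est2} are insensitive to $\phi$ and transfer directly, while the $L^p$ estimate and higher moments $|x|^\delta$, $|v|^k$ coming from \myref{hyp} require tracking that the constants in the corresponding It\^o computation do not blow up with $r$. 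The key observation is that $L^{MT}[f^r]$ only enters the estimates through integrals of the form $\int |L^{MT}[f^r]| |v|^{k-1} f^r$, which after a Cauchy--Schwarz type manipulation can be controlled by the kinetic energy of $f^r$ itself, independently of $\phi_r$.

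Next, I would establish tightness of the laws of $(f^r)_r$ in the path space $C([0,T] ; H^{-\sigma}_{W^{-1}}(\mathbb{R}^{2d}))$. Temporal regularity follows from the martingale formulation of Definition \ref{martingale_def}: the drift terms are controlled in a negative weighted Sobolev norm thanks to the uniform moment bounds, and the stochastic integral is controlled via Burkholder-Davis-Gundy, both giving uniform H\"older estimates in $t$ strong enough for Aldous-type tightness. Spatial compactness is produced by the weighted Sobolev embedding combined with the uniform $L^p$ and moment control. Crucially, the stochastic velocity averaging lemma already used in section \ref{sec3} gives strong $L^1_{loc}$ precompactness of averages such as the local density $\rho^r = \int f^r \, dv$ and the local flux $j^r = \int v f^r \, dv$, which is precisely what will be needed to handle the nonlinear terms.

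Once tightness is known, Skorokhod-Jakubowski furnishes a subsequence $r_n \to 0$, a new probability space, and processes $(\tld f^{r_n}, \tld \beta^{r_n})$ with the same laws as $(f^{r_n}, \beta^{r_n})$, converging almost surely to some $(\tld f, \tld \beta)$. The task is then to pass to the limit in the martingale identity. The Cucker-Smale drift $L^{CS}[\tld f^{r_n}]$, the diffusion coefficient $K[\tld f^{r_n}]$, and the It\^o corrector $S[\tld f^{r_n}]$ all involve smooth convolutions against $\psi, \tld \psi, F$, so they pass to the limit by weak continuity combined with the uniform $|v|^k$ bound. The stochastic integral is identified as a square-integrable martingale against $\tld \beta$ by a standard cross-variation argument, using the continuous embedding provided by the weighted Sobolev framework.

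The main obstacle is the identification
\begin{align*}
\bigl< L^{MT}[\tld f^{r_n}] \tld f^{r_n} , \Psi \bigr>
\; \longrightarrow \;
\bigl< L^{SLA}[\tld f] \tld f , \Psi \bigr>
\end{align*}
in probability for $\Psi \in C^\infty_c(\mathbb{R}^{2d})$. Writing
$$
L^{MT}[f^r](x,v) f^r = f^r \cdot \frac{(j^r - v \rho^r) * \phi_r}{\rho^r * \phi_r},
$$
one exploits three ingredients: the strong local convergence of $\rho^{r_n}$ and $j^{r_n}$ furnished by the averaging lemma; the fact that $\phi_r \to \delta_0$ with uniform $L^1$ mass, so that $\rho^r * \phi_r \to \rho$ and $j^r * \phi_r \to j$ in $L^1_{loc}$; and the delicate cancellation from \mycite{KMT2}, namely that the prefactor $f^r$ vanishes on the set $\{\rho^r * \phi_r = 0\}$ sufficiently fast that the ratio does not produce spurious singularities in the limit. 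This is the step where the $L^p$ integrability in \myref{hyp} is consumed, via a uniform control of $\int f^r \log(f^r)$ or a Dunford-Pettis type argument to avoid concentration on level sets of $\rho$. The limiting object is precisely $\tld f \cdot (j - v \rho)/\rho = \tld f \, L^{SLA}[\tld f]$ on $\{\tld f > 0\}$, and is zero elsewhere. Together with the smooth terms handled above, this shows that $\tld f$ satisfies the martingale problem for \myref{spde2}, completing the proof.
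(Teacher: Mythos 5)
Your overall architecture matches the paper exactly: uniform estimates on $(f^r)_r$ as $r\to 0$, tightness in $C([0,T]; H^{-\sigma}_{W^{-1}})$ via Arzel\`a--Ascoli combined with the Kolmogorov-type time-regularity bound, strong $L^2_{t,x}$ precompactness of the velocity averages $\rho^r_\varphi$ from the averaging lemma, Skorokhod representation, and identification of the limiting martingale problem with the $L^{MT}\to L^{SLA}$ term handled last via $\rho^{r_n}, j^{r_n}$ and the degenerate set $\{\phi_r * \rho = 0\}$. The paper's proof is literally a one-page reduction to the arguments of Section 3, so your reconstruction is structurally on target.

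However, the decisive ingredient is misattributed. You write that the MT contribution ``after a Cauchy--Schwarz type manipulation can be controlled by the kinetic energy of $f^r$ itself, independently of $\phi_r$.'' The independence of $\phi_r$ is the whole point, and it does \emph{not} come from Cauchy--Schwarz. It comes from the fact that the constant $C(\phi)$ in Proposition~\ref{ineq_u} --- which arises from the KMT inequality $\int \rho(x)\,(\phi*\rho)(x)^{-1}\,\phi(x-y)\,dx \le C(\phi)$ --- is \emph{scale invariant} under $\phi\mapsto \phi_r = r^d \phi_1(\cdot/r)$: from \myref{C_phi},
\begin{align*}
C(\phi_r) \;\propto\; \frac{\sup_{B(0,r_2 r)} \phi_r}{\inf_{B(0,r_1 r)} \phi_r}\Big(\frac{r_2 r}{r_1 r}\Big)^d
\;=\; \frac{\sup_{B(0,r_2)} \phi_1}{\inf_{B(0,r_1)} \phi_1}\Big(\frac{r_2}{r_1}\Big)^d \;\propto\; C(\phi_1),
\end{align*}
uniformly in $r>0$. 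Every estimate in Sections~\ref{sec:unif_estimates}--\ref{sec:av_lemma} that involves $\phi$ does so only through this constant (via Propositions~\ref{ineq_u} and~\ref{chap3-moments}); once the scale invariance is noted, uniformity in $r$ is automatic. Without this specific computation the uniform-in-$r$ program you outline would stall, since the constant in Proposition~\ref{ineq_u} is a priori $\phi$-dependent. Your Cauchy--Schwarz step ($v\cdot(u-v)\le \tfrac14|u|^2$ in the proof of Proposition~\ref{chap3-moments}) is a separate, auxiliary manipulation, not what kills the $\phi_r$-dependence.

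A secondary point: the entropy control $\int f^r\log f^r$ and the Dunford--Pettis idea you float are not needed and are not used. The identification $L^{MT}[\hat f^{r_n}]\hat f^{r_n}\to L^{SLA}[\hat f]\hat f$ proceeds exactly as in the proof of Proposition~\ref{martingale_problem}: the averaging lemma gives strong $L^2_{t,x}$ convergence of $\hat\rho^n, \hat j^n$ and of their $\phi_{r_n}$-convolutions (Lemma~\ref{rho_j}, whose proof transfers verbatim since $\|\phi_r\|_{L^1}$ is bounded and $\phi_r\to\delta_0$); the quantities $\hat J^n = u_{R_n}[\hat f^n]\cdot\hat\rho^n_{\nabla\Psi_2}$ are bounded in $L^q_\omega L^\infty_t L^q_x$ for some $q>2$ thanks again to Proposition~\ref{ineq_u}; the weak-$*$ limit vanishes on $\{\phi_r*\hat\rho = 0\}$ by the $\frac12$--$\frac12$ splitting in the proof; and on the complement the limit is identified pointwise. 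The $L^p$ and $|v|^k$ hypotheses of \myref{hyp} are precisely what make these bounds close, with no entropy needed.

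So: correct skeleton and a correct instinct that the MT term is the obstruction, but the mechanism that makes the whole thing work --- the explicit $r$-invariance of $C(\phi_r)$ --- is missing, and a couple of speculative detours (entropy, Dunford--Pettis) should be replaced by the concrete averaging-lemma/identification machinery already set up in Section~\ref{sec3}.
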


Note that, although $f$ only converges weakly in space, the use of a stochastic averaging lemma (developed in Proposition \ref{averaging_lemma} below) will guarantee that integrated quantities of the form
$$
\rho_\varphi = \int_{\mathbb{R}^d} \varphi(v) f dv
$$
converge in the strong space $L^2([0,T] ; L^2(\mathbb{R}^{d}))$. This allows to properly pass to the limit in the non-linear equation \myref{chap3-spde}.

\end{subsection}

\begin{subsection}*{Acknowledgment}

A. Debussche and A. Rosello are partially supported by the French government thanks to the "Investissements d'Avenir"
program ANR-11-LABX-0020-01.

\end{subsection}

\end{section}

\begin{section}{Regularized equation} \label{sec2}

In this section, we prove  existence and uniqueness of a solution of equation \myref{chap3-spde} with regularized coefficients. This solution will be naturally obtained as the mean-field limit of the corresponding particle system.
\vspace{3mm}

For $R > 0$ let us introduce smooth, compactly supported truncation functions 
$$
\chi_R \in C^\infty_c(\mathbb{R}^d ; \mathbb{R}), \hspace{5mm} \theta_R \in C^\infty_c(\mathbb{R}^d ; \mathbb{R}^d)
$$
satisfying
\begin{equation}
\left\{
\begin{array}{l}
 \chi_R(x) = 1 \; \text{ if } \; |x| \le R, \hspace{10mm} |\chi_R(x)| \le 1,
\vspace{1mm} \\
 \theta_R(v) = v  \; \text{ if }  \; |v| \le R, \hspace{10mm} |\theta_R(v)| \le |v|,
\vspace{1mm} \\
  | \nabla_v \cdot \theta_R(v) | \lesssim 1 \; \; \text{ uniformly in } R > 0. 
\end{array}
\right.
\label{theta}
\end{equation}
We may then introduce the following regularized coefficients:
\begin{equation}
\left\{
\begin{array}{l}
 L^{CS}_R[\mu](z) = \displaystyle{ \int  \chi_R(x-y) \psi(x-y) \theta_R(w-v) d\mu(y,w),  }
\vspace{2mm} \\
u_{R} [\mu](x) = \frac{ 
\displaystyle{
\int_{\mathbb{R}^d \times \mathbb{R}^d} \phi(x-y)  \theta_R(w) \, d\mu(y,w) } }
{  \displaystyle{ R^{-1} + \int_{\mathbb{R}^d \times \mathbb{R}^d} \phi(x-y) d\mu(y,w) }  }, 
\vspace{2mm} \\
 L^{MT}_R[\mu](z) = u_{R}[\mu](x) - v,
\vspace{2mm} \\
 K_R[\mu](z) = \displaystyle{ \chi_R(x) F(x) + 
\int_{\mathbb{R}^d \times \mathbb{R}^d} \chi_R(x-y) \tld \psi(x-y) \theta_R(w-v) d\mu(y,w),  }
\vspace{2mm} \\
 S_R[\mu](z) = \displaystyle{ \frac{1}{2} \int_{\mathbb{R}^{2d}} 
\tld \psi(x-y) \Big( K_R[\mu](y,w) - K_R[\mu](x,v) \Big) d\mu(y,w). }
\end{array}
\right.
\label{reg}
\end{equation}
Simple calculations show that, for fixed $R$, these regularized coefficients are globally Lipschitz continuous in the following sense: for all $z, z' \in \mathbb{R}^{2d}$, $\mu, \nu \in {\cal P}(\mathbb{R}^{2d})$,
\begin{equation}
\begin{array}{l}
\Big| L^{CS}_R[\mu](z) - L^{CS}_R[\nu](z') \Big| + 
\Big| L^{MT}_R[\mu](z) - L^{MT}_R[\nu](z') \Big| 
\lesssim |z-z'| + W_1[\mu,\nu],
\\
\Big| K_R[\mu](z) - K_R[\nu](z') \Big| + \Big| S_R[\mu](z) - S_R[\nu](z') \Big|
\lesssim |z-z'| + W_1[\mu,\nu]
\end{array}
\label{glob_lip}
\end{equation}
where the constants involved in $\lesssim $ in \myref{glob_lip} depend on $R$, and $W_1$ denotes the Wasserstein distance. The assumptions made also guarantee the uniform sub-linearity of some coefficients: for all $z \in \mathbb{R}^{2d}$, $\mu \in {\cal P}(\mathbb{R}^{2d})$,
\begin{align}
\Big| L^{CS}_R[\mu](z) \Big| + \Big| K_R[\mu](z) \Big| + \Big| S_R[\mu](z) \Big| \lesssim 1 + |z| + \int |z'| d\mu(z')
\label{sublinear}
\end{align}
where the constant involved in $\lesssim$ in \myref{sublinear} does not depend on $R$.

\begin{subsection}{Mean-field limit of the associated particle system}

We may now consider the associated mean-field particle system on $Z^{i,N} = (X^{i,N}, V^{i,N}) \in \mathbb{R}^{2d}$, $i\in \{1, \ldots, N \}$:
\begin{equation}
\left\{
\begin{array}{l }
dX^{i,N}_t  =  V^{i,N}_t dt
\\
dV^{i,N}_t  =  \Big( L^{CS}_R[\mu^N_t] + L^{MT}_R[\mu^N_t] + S_R[\mu^N_t] \Big)(Z^{i,N}_t) dt  + K_R[\mu^N_t](Z^{i,N}_t) d\beta_t,
\end{array}
\right.
\label{part_reg}
\end{equation}
where $\mu_t^N$ denotes the empirical measure
\begin{align*}
\mu_t^N = \frac{1}{N} \sum_{i=1}^N \delta_{Z^{i,N}_t}.
\end{align*}
From the sub-linearity of the coefficients, we easily deduce the following result.

\begin{proposition} \label{estim1}
For any $T > 0$ and $z_0^{i,N} \in \mathbb{R}^{2d}$, $i \in \{1, \ldots, N \}$, the SDE system \myref{part_reg} with initial condition $Z_0^{i,N} = z_0^{i,N}$ has a unique global solution $(Z^{i,N}_t)_{t \in [0,T]}^{i \in \{1, \ldots, N \}}$ which satisfies the following estimates: for any $p \ge 1$,
\begin{align*}
& \mathbb{E} \Big[ \sup_{t \in [0,T]} \int_{\mathbb{R}^{2d}} |z|^p d\mu^N_t(z) \Big] \lesssim 1 + \int_{\mathbb{R}^{2d}} |z|^p d\mu_0^N(z),
\\
& \mathbb{E} \Big[ \sup_{t \in [0,T]} \Big| Z^{i,N}_t \Big|^p \Big] \lesssim 1+ |z_0^{i,N}|^p +  \int_{\mathbb{R}^{2d}} |z|^p d\mu_0^N(z).
\end{align*}
The constants involved in $\lesssim$ depend on $R, p$ and $T$ only.
\end{proposition}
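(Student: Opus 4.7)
The proposition splits naturally into two tasks: strong well-posedness of the SDE system, and the moment bounds.

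For well-posedness, the plan is to view \myref{part_reg} as a single SDE on $\mathbb{R}^{2dN}$ for the configuration $\mathbf{Z} = (Z^{1,N},\ldots,Z^{N,N})$. Given two configurations $\mathbf{Z},\mathbf{Z}'$, coupling the empirical measures by identity gives
$$
W_1(\mu^N_\mathbf{Z},\mu^N_{\mathbf{Z}'}) \le \frac{1}{N}\sum_{j=1}^N |Z^j - Z'^j| \le N^{-1/2}\|\mathbf{Z}-\mathbf{Z}'\|,
$$
so the Lipschitz bounds \myref{glob_lip} translate into global Lipschitz continuity (with constants depending on $R$ and $N$) for the drift and diffusion of the $\mathbb{R}^{2dN}$-valued SDE. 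Classical strong existence and uniqueness for SDEs with globally Lipschitz coefficients then yields a unique global solution.

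For the first moment estimate, I would apply It\^o's formula to $\phi_p(Z^{i,N}_t):=(1+|Z^{i,N}_t|^2)^{p/2}$ (the regularization at the origin avoids differentiability issues for $p<2$). Using $|\nabla\phi_p(z)|\lesssim (1+|z|^2)^{(p-1)/2}$ and $|\nabla^2\phi_p(z)|\lesssim (1+|z|^2)^{(p-2)/2}$ together with the sublinear bound \myref{sublinear} and the elementary estimate $|L^{MT}_R[\mu](z)|\le |v|+C_R\int|z'|d\mu(z')$ (which follows from $|\theta_R(w)|\le|w|$ and the lower bound $R^{-1}$ in the denominator of $u_R$), one obtains drift and diffusion contributions controlled by $C_R(1+|Z^{i,N}_t|+m_1^N(t))$ with $m_1^N(t):=\int|z|d\mu^N_t$. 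Summing over $i$, dividing by $N$, and writing $m_p^N(t):=\int(1+|z|^2)^{p/2}d\mu^N_t$, I would use Young's inequality to handle cross terms and Jensen's inequality $(m_1^N)^p\le m_p^N$ (this is where one avoids any $N$-dependent factor entering the estimate) to reach a Gronwall-ready inequality of the form
$$
m_p^N(t) \le m_p^N(0) + C\int_0^t (1 + m_p^N(s))\,ds + \mathcal{M}_t,
$$
with $\mathcal{M}$ a local martingale whose quadratic variation is likewise bounded by $\int_0^t (1+m_p^N(s))^2 ds$. Burkholder--Davis--Gundy followed by a localization argument and Gronwall's lemma then produces the first inequality.

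With $\mathbb{E}[\sup_{t\le T} m_p^N(t)]$ now controlled a priori, the second (particle-wise) estimate follows from a second application of It\^o to $(1+|Z^{i,N}_t|^2)^{p/2}$: since the only coupling with the other particles enters through $m_1^N(s)$, which is integrated and controlled in expectation, BDG together with Gronwall directly closes the bound on $\mathbb{E}[\sup_{t\le T}|Z^{i,N}_t|^p]$ in terms of $|z_0^{i,N}|^p$ and $m_p^N(0)$. The only mildly delicate point in the whole argument is verifying that Jensen's inequality indeed allows one to absorb the empirical-measure terms without introducing an $N$-dependent constant; everything else is routine It\^o/BDG/Gronwall bookkeeping.
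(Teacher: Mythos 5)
Your proposal is correct and follows essentially the same strategy as the paper: It\^o's formula applied to $p$-th powers of the particle positions, averaging over $i$ to get an inequality for the empirical $p$-moment, a Burkholder--Davis--Gundy bound on the martingale term, and Gronwall to close, followed by a second particle-wise pass once the empirical moment is controlled. Your two refinements --- replacing $|z|^p$ by the smooth surrogate $(1+|z|^2)^{p/2}$ to avoid differentiability issues at the origin for $p<2$, and spelling out that Jensen's inequality $(\int|z|\,d\mu)^p \le \int|z|^p\,d\mu$ is what keeps the constants $N$-independent --- are points the paper leaves implicit (it writes $A^i_t \lesssim 1 + |Z^{i,N}_t|^p + \int|z|^p\,d\mu^N_t$ without derivation), and your observation that the global Lipschitz bound \myref{glob_lip} plus the coupling $W_1(\mu^N_{\mathbf Z},\mu^N_{\mathbf Z'})\le N^{-1/2}\|\mathbf Z-\mathbf Z'\|$ makes the $\mathbb{R}^{2dN}$-valued SDE globally Lipschitz is a slightly more direct route to well-posedness than the paper's ``locally Lipschitz $+$ non-explosion via the moment estimate'' argument; but none of this changes the substance of the proof.
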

\begin{proof}[Proof]
The coefficients of \myref{part_reg} being locally Lipschitz-continuous, the local existence and uniqueness of solutions is guaranteed.
The estimates of Proposition \ref{estim1} should first be established with the stopping time 
$$ \tau_M = \inf \left\{ t \ge 0, \; \; \max_{1 \le i \le N} \Big| Z^{i,N} \Big| \ge M \right\} \wedge T,$$
which should then be sent to $T$, as $M$ goes to infinity. For the sake of simplicity, we omit this stopping time in the following. Given the sub-linearity of the coefficients, Itô's formula gives
\begin{align}
\Big| Z^{i,N}_t \Big|^p = |z_0^{i,N} |^p + \int_0^t A^i_s ds + \int_0^t B^i_s d\beta_s
\label{ito1}
\end{align}
with
\begin{align*}
A^i_t \lesssim 1 + |Z^{i,N}_t|^p + \int |z|^p d\mu^N_t, \hspace{10mm}
B^i_t = p \Big|Z_t^{i,N} \Big|^{p-2} Z_t^{i,N} \cdot K_R[\mu_t^N](Z_t^{i,N}).
\end{align*}
Averaging \myref{ito1} over $i$, we are led to
\begin{align}
\int |z|^p d\mu^N_t = \int |z|^p d\mu_0^N + \int_0^t \Big( \frac{1}{N} \sum_{i=1}^N A^i_s  \Big) ds 
+ \int_0^t \Big( \frac{1}{N} \sum_{i=1}^N B^i_s \Big) d\beta_s
\label{ito2}
\end{align}
from which we easily deduce, for all $0 \le r \le T$
\begin{align*}
\E \Big[ \sup_{t \in [0,r]} \int |z|^p d\mu^N_t \Big] 
 \lesssim 1 +  \E \Big[ \int |z|^p d\mu_0^N \Big]  
& +  \int_0^r \E \Big[ \sup_{t \in [0,s]} \int |z|^p d\mu_t^N \Big]  ds 
\\
&
+ \E \Big[ \sup_{t \in [0,r]}  \int_0^t \Big( \frac{1}{N} \sum_{i=1}^N B^i_s \Big) d\beta_s \Big].
\end{align*}
Burkholder-Davis-Gundy's inequality (from \cite{bdg}) gives
\begin{align*}
\E \Big[ \sup_{t \in [0,r]}  \int_0^t \Big( \frac{1}{N} \sum_{i=1}^N B^i_s \Big) d\beta_s \Big] 
& \lesssim
\E \Big[ \Big( \int_0^r \Big| \frac{1}{N} \sum_{i=1}^N B^i_s \Big|^2 ds \Big)^{1/2} \Big]
\lesssim \E \Big[ \Big( \int_0^r \Big| 1 + \int |z|^p d\mu_s^N \Big|^2 ds \Big)^{1/2} \Big]
\\
& \lesssim 1 + \E \Big[ \Big( \sup_{t \in [0,r]} \int |z|^p d\mu_t^N  \Big)^{1/2} \Big( \int_0^r \int |z|^p d\mu_s^N ds \Big)^{1/2}  \Big]
\\
& \lesssim 1 + \E \Big[ \sup_{t \in [0,r]} \int |z|^p d\mu_t^N   \Big]^{1/2} \int_0^r \E \Big[ \sup_{t \in [0,s]} \int |z|^p d\mu^N_t \Big]^{1/2}
\end{align*}
so that we may come back to \myref{ito2} and get
\begin{align*}
\E \Big[ \sup_{t \in [0,r]} \int |z|^p d\mu^N_t \Big] 
 \lesssim 1
 +  \E \Big[ \int |z|^p d\mu_0^N \Big]   +  \int_0^r \E \Big[ \sup_{t \in [0,s]} \int |z|^p d\mu_t^N \Big]  ds.
\end{align*}
We may now apply Gronwall's lemma to derive the first estimate. Coming back to \myref{ito1}, a similar reasoning leads to the second estimate.

\end{proof}

Let us now consider the space of trajectories 
$${\cal C} = C([0,T] ; \mathbb{R}^{2d}), \hspace{5mm} \| z \|_\infty = \sup_{t \in [0,T]} |z_t|$$ 
and view the empirical measure as a (random) probability over ${\cal C}$: 
\begin{align*}
\mu^N = \frac{1}{N} \sum_{i=1}^N \delta_{Z^{i,N}} \in {\cal P}({\cal C}).
\end{align*}
More precisely, for $p \ge 1$, we introduce the Wasserstein space
$$
{\cal P}_p({\cal C}) = \left\{ \mu \in {\cal P}({\cal C}), \; \; \int_{z \in {\cal C}} \| z \|_\infty^p d\mu < \infty \right\}
$$
equipped with the usual distance
$$
W_p[\mu,\nu] = \inf_{\pi \in \Pi(\mu,\nu)} \Big( \int_{(z_1,z_2) \in {\cal C}^2} \| z_1 - z_2 \|_\infty^p d\pi(z_1,z_2) \Big)^{1/p}
$$
where 
$$\Pi(\mu,\nu) = \left\{ \pi \in {\cal P}({\cal C}^2), \; \; \int_{z_2 \in {\cal C}} \pi( dz_1, dz_2) = \mu(dz_1), \; 
\int_{z_1 \in {\cal C}} \pi( dz_1, dz_2) = \nu(dz_2)
 \right\}.$$ 
 We may now state the following mean-field limit result.
 
\begin{proposition} \label{mean_field} Let $p > 1$.
As $N \to \infty$, provided that $\mu_0^N \to \mu_0$ in ${\cal P}_p(\mathbb{R}^{2d})$, we have 
$\mu^N \to \mu \text{ in } L^p(\Omega ; {\cal P}_p({\cal C}))$
where $\mu$ solves the regularized SPDE
\begin{align}
& d\mu_t + v \cdot \nabla_x \mu_t dt +  \nabla_v \cdot \Big( \Big( L^{CS}_R[\mu_t] + L^{MT}_{R}[\mu_t] + S_R[\mu_t] \Big) \mu_t \Big) dt 
+ \nabla_v \cdot \left( K_R[\mu_t] \mu_t \right) d\beta_t 
\nonumber \\ & \hspace{60mm}
= \frac{1}{2} \nabla_v \cdot \Big( \nabla_v \cdot ( K_R[\mu_t] K_R[\mu_t]^T \mu_t ) \Big) dt
\label{spde_reg}
\end{align}
in the following sense: denoting the operator
\begin{align}
{\cal L}_R[\mu] \Psi = v \cdot \nabla_x \Psi +  \Big( L_R^{CS}[\mu] + L_R^{MT}[\mu] + S_R[\mu] \Big) \cdot \nabla_v \Psi + \frac{1}{2} \sum_{1 \le i, j \le d} K_R[\mu]^i K_R[\mu]^j \partial^2_{v_i v_j} \Psi
\label{dual_operator}
\end{align}
we have, for any test function $\Psi \in C_c^\infty(\mathbb{R}^{2d})$, 
\begin{align*}
\langle \mu_t, \Psi \rangle = \langle \mu_0, \Psi \rangle + 
\int_0^t \Bigl< {\cal L}_R[\mu_s] \Psi , \mu_s \Bigr> ds +
\int_0^t \Bigl< K_R[\mu_s] \cdot \nabla_v \Psi , \mu_s \Bigr> d\beta_s, \; \; \; t \in [0,T].
\end{align*}
More precisely, $\mu$ is the unique element of $L^2(\Omega ; {\cal P}_p({\cal C}))$  given by the push-forward measure of the initial data by the non-linear characteristics:
\begin{align}
\mu = (Z^\mu)^* \mu_0 \text{ in } {\cal P}({\cal C}), \; \; a.s \label{fix_point}
\end{align}
where $Z^\mu : z \in \mathbb{R}^{2d} \mapsto (X^\mu_t(z), V^\mu_t(z))_{t \in [0,T]} \in {\cal C}$ is the flow associated with the SDE
\begin{equation}
\left\{
\begin{array}{l}
dX^\mu_t(z) = V^\mu_t(z) dt,
\\
dV^\mu_t(z) =  \Big( L^{CS}_R[\mu_t]+ L^{MT}_R[\mu_t] + S_R[\mu_t] \Big)(Z^\mu_t(z))  dt + K_R[\mu_t](Z^{\mu}_t(z)) d\beta_t,
\\
Z^\mu_0(z) = z.
\end{array}
\right.
\label{chap3-chara}
\end{equation}

\end{proposition}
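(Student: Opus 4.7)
The plan is to construct $\mu$ directly as the fixed point \myref{fix_point}, verify it solves \myref{spde_reg} in the weak sense via It�'s formula applied to the characteristics, and then prove $W_p$-convergence of $\mu^N$ via a synchronous coupling against this candidate limit.

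\textbf{Step 1: fixed point.} Work on the complete metric space $E = \{ \mu \in L^p(\Omega; {\cal P}_p({\cal C})) : \mu \text{ adapted}\}$ equipped with $d_T(\mu,\nu)^p = \E W_p^p[\mu,\nu]$, and define $\Phi(\mu) = (Z^\mu)^\ast \mu_0$ where $Z^\mu$ is the flow of \myref{chap3-chara}. For each fixed adapted $\mu$, the $z$-Lipschitz bound in \myref{glob_lip} together with \myref{sublinear} give existence, uniqueness and $p$-th moment bounds for $Z^\mu(z)$ as in Proposition \ref{estim1}, so $\Phi:E \to E$. For $\mu, \nu \in E$, drive $Z^\mu(z)$ and $Z^\nu(z)$ with the same Brownian motion and the same initial point $z$; the Wasserstein-Lipschitz part of \myref{glob_lip}, combined with BDG and Gronwall in $t$, yields, uniformly in $z$,
\[
\E \sup_{s \le t} |Z^\mu_s(z) - Z^\nu_s(z)|^p \lesssim \int_0^t \E W_p^p[\mu_s,\nu_s] ds.
\]
Using the synchronous coupling via initial conditions as an admissible coupling for the push-forwards, integrating against $\mu_0(dz)$, and noting that marginal $W_p$ is bounded by path-space $W_p$, one obtains
\[
\E W_p^p\bigl[\Phi(\mu)_{[0,t]},\Phi(\nu)_{[0,t]}\bigr] \lesssim \int_0^t \E W_p^p\bigl[\mu_{[0,s]}, \nu_{[0,s]}\bigr] ds.
\]
Picard iteration (or equivalently a weighted norm $e^{-\lambda t} d_t$ for $\lambda$ large) then produces a unique fixed point $\mu \in E$.

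\textbf{Step 2: SPDE.} Apply It�'s formula to $\Psi(Z^\mu_t(z))$ for $\Psi \in C^\infty_c(\mathbb{R}^{2d})$: the drift in \myref{chap3-chara} produces exactly $({\cal L}_R[\mu_s]\Psi)(Z^\mu_s(z))$ and the diffusion produces $K_R[\mu_s](Z^\mu_s(z)) \cdot \nabla_v \Psi(Z^\mu_s(z)) d\beta_s$. Integrating against $\mu_0(dz)$, using stochastic Fubini (justified by the $L^p$ moment bounds and compact support of $\Psi$) and $\mu_s = (Z^\mu_s)^\ast \mu_0$, one recovers
\[
\langle \mu_t, \Psi \rangle = \langle \mu_0, \Psi \rangle + \int_0^t \langle {\cal L}_R[\mu_s] \Psi, \mu_s \rangle ds + \int_0^t \langle K_R[\mu_s] \cdot \nabla_v \Psi, \mu_s \rangle d\beta_s,
\]
which is the claimed weak formulation with the operator \myref{dual_operator}.

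\textbf{Step 3: mean-field limit.} Introduce the synchronous copies $\tld Z^{i,N}_t := Z^\mu_t(z_0^{i,N})$ driven by the same $\beta$ as the particle system, and set $\tld \mu^N := \frac{1}{N} \sum_i \delta_{\tld Z^{i,N}} = (Z^\mu)^\ast \mu_0^N$. Subtracting the SDEs for $Z^{i,N}$ and $\tld Z^{i,N}$ and applying \myref{glob_lip} with BDG,
\[
\E \sup_{s \le t} |Z^{i,N}_s - \tld Z^{i,N}_s|^p \lesssim \int_0^t \E W_p^p[\mu^N_s,\mu_s] ds + \int_0^t \E \sup_{r \le s} |Z^{i,N}_r - \tld Z^{i,N}_r|^p ds.
\]
Averaging in $i$, using $W_p^p[\mu^N, \tld \mu^N] \le \frac{1}{N} \sum_i \|Z^{i,N} - \tld Z^{i,N}\|_\infty^p$ and the triangle inequality $W_p[\mu^N_s,\mu_s] \le W_p[\mu^N_s, \tld \mu^N_s] + W_p[\tld \mu^N_s, \mu_s]$, Gronwall bounds $\E W_p^p[\mu^N, \mu]$ by a multiple of $\int_0^T \E W_p^p[\tld \mu^N_s, \mu_s] ds$. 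The latter tends to zero since $\tld \mu^N = (Z^\mu)^\ast \mu_0^N$, $\mu = (Z^\mu)^\ast \mu_0$, $\mu_0^N \to \mu_0$ in ${\cal P}_p(\mathbb{R}^{2d})$ and $Z^\mu(\cdot)$ is continuous with a uniform $L^p$ modulus in $z$.

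\textbf{Main obstacle.} The non-trivial point is closing the Gronwall loop in Step 3: $\mu^N$ enters every particle SDE non-locally through $L^{CS}_R, L^{MT}_R, S_R, K_R$, so the pathwise coupling error must be controlled by a quantity that itself involves the whole configuration. The global Wasserstein-Lipschitz bound \myref{glob_lip} (used with $W_1 \le W_p$) is precisely what makes the loop self-consistent. A second delicate ingredient is that the Motsch-Tadmor coefficient $L^{MT}_R$ is only Lipschitz in $\mu$ thanks to the regularization $R^{-1}$ in the denominator of \myref{reg}; without this lower bound the fixed-point and coupling strategies would both break down, which is precisely why one must pass to the limit $R \to \infty$ only after obtaining compactness at the SPDE level (as carried out in Section \ref{sec3}).
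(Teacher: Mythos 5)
Your proof is correct, and it takes a genuinely different route from the paper's, namely the canonical McKean--Vlasov ordering: you first construct $\mu$ intrinsically by running Banach's fixed point for $\Phi(\mu) = (Z^\mu)^*\mu_0$ on a complete metric space of adapted random path measures, verify the weak formulation via It\^o and stochastic Fubini, and only then prove the mean-field limit by coupling each particle $Z^{i,N}$ to the synchronous copy $\tld Z^{i,N} := Z^\mu(z_0^{i,N})$ driven by the already-constructed nonlinear flow. The paper proceeds in the dual order: it first shows $(\mu^N)_N$ is Cauchy in $L^2(\Omega;{\cal P}_2({\cal C}))$ by coupling $\mu^N$ and $\mu^M$ through an optimal plan on the initial data, obtains the limit $\mu$ by completeness, and then identifies $\mu$ with the fixed point by comparing $\mu^N$ against $\nu := (Z^\mu)^*\mu_0$ (with a final separate argument for uniqueness of the fixed point). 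The technical core is the same in both — synchronous coupling, the Lipschitz bounds \myref{glob_lip} with $W_1\le W_p$, BDG and Gronwall — so neither route is harder; yours has the advantage of making well-posedness of the nonlinear characteristics SDE self-contained and independent of the particle system, while the paper's avoids having to specify and verify closedness of the adapted subset $E$ and that $\Phi$ preserves it. Two small points worth tightening if you write this out: (i) the Gronwall step in your Step 3 controls $\E W_p^p[\mu^N,\mu]$ by $\E W_p^p[\tld\mu^N,\mu]$ (after swallowing the $\E W_p^p[\mu^N,\tld\mu^N]$ term into the loop via $J_t$), and the final convergence to zero relies on the flow Kolmogorov estimate $\E\sup_t|Z^\mu_t(z_1)-Z^\mu_t(z_2)|^p\lesssim|z_1-z_2|^p$, which is the content of \myref{kolmo1} and should be established directly from \myref{glob_lip} before Step 3 rather than cited forward; (ii) your closing observation about the $R^{-1}$ regularization in the denominator of $u_R$ being what makes $L_R^{MT}$ Lipschitz in the measure argument is exactly the point that makes \myref{glob_lip} hold, and hence what makes both the contraction and the coupling loop close.
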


\begin{proof}
For simplicity, let us consider the case $p=2$.
We start by noticing that, for fixed $N \ge 1$, the empirical measure $\mu^N$ naturally satisfies the fixed-point-like equation \myref{fix_point}. For all $N, M \ge 1$, introducing an optimal plan  $\pi \in \Pi(\mu_0^N, \mu_0^M)$ (one may refer to \cite{villani} for details) such that
$$
W_2^2[\mu_0^N, \mu_0^M] = \int_{(z_1,z_2) \in (\mathbb{R}^{2d})^2} | z_1 - z_2 |^2 d\pi(z_1,z_2),
$$
it follows that
\begin{align}
W_2^2[\mu^N, \mu^M] \le  \int_{(z_1,z_2) \in (\mathbb{R}^{2d})^2} \| Z^{\mu^N}(z_1) - Z^{\mu^M}(z_2) \|^2_\infty d\pi(z_1,z_2) =: J^{N,M}_T
\label{JNM1}
\end{align}
and we may simply re-write
\begin{align}
J^{N,M}_T = \sum_{1 \le i \le N} \sum_{1 \le j \le M} \sup_{t \in [0,T]} \Big| Z^{i,N}_t - Z^{j,M}_t \Big|^2 \pi(\{z_0^{i,N}, z_0^{j,M} \})
\label{JNM}
\end{align}
where $Z^{i,N}$ is the solution of \myref{part_reg}. Itô's formula easily leads to
\begin{align*}
d \Big| Z^{i,N}_t - Z^{j,M}_t \Big|^2 = \Big( \zeta^1_t + \zeta^2_t + \zeta^3_t + \zeta^4_t  \Big) dt + \zeta^5_t d\beta_t
\end{align*}
where
\begin{align*}
& \zeta^1_t =  2 \Big( Z^{i,N}_t - Z^{j,M}_t \Big) \cdot \Big( L^{CS}_R[\mu_t^N](Z^{i,N}_t) - L^{CS}_R[\mu_t^M](Z^{j,M}_t) \Big),
\\
& \zeta^2_t =
2 \Big( Z^{i,N}_t - Z^{j,M}_t \Big) \cdot \Big( L^{MT}_R[\mu_t^N](Z^{i,N}_t) - L^{MT}_R[\mu_t^M](Z^{j,M}_t) \Big),
\\
& \zeta^3_t = 
2 \Big( Z^{i,N}_t - Z^{j,M}_t \Big) \cdot \Big(  S_R[\mu_t^N](Z^{i,N}_t) - S_R[\mu_t^M](Z^{j,M}_t) \Big),
\\
& \zeta^4_t =  \Big| K_R[\mu_t^N](Z^{i,N}_t) - K_R[\mu_t^M](Z^{j,M}_t) \Big|^2,
\\
& \zeta^5_t = 
2 \Big( Z^{i,N}_t - Z^{j,M}_t \Big) \cdot \Big(  K_R[\mu_t^N](Z^{i,N}_t) - K_R[\mu_t^M](Z^{j,M}_t) \Big).
\end{align*}
Using the Lipschitz estimate \myref{glob_lip}, we deduce (for fixed $R > 0$)
\begin{align}
d \Big| Z^{i,N}_t - Z^{j,M}_t \Big|^2 &  \lesssim \Big( \Big| Z^{i,N}_t - Z^{j,M}_t \Big|^2  + W_1[ \mu_t^N, \mu_t^M ]^2 \Big) dt + \zeta^5_t d\beta_t \nonumber
\\ &
\lesssim \Big( \Big| Z^{i,N}_t - Z^{j,M}_t \Big|^2  + J_t^{N,M} \Big)dt + \zeta^5_t d\beta_t.
\label{this1}
\end{align}
Taking the expectation in \myref{this1} and applying Gronwall's lemma leads to
\begin{align*}
\mathbb{E} \Big| Z^{i,N}_t - Z^{j,M}_t \Big|^2 \lesssim \Big( | z_0^{i,N} - z_0^{j,M} |^2 + \mathbb{E} [ J_t^{N,M} ]  \Big).
\end{align*}
Coming back to \myref{this1}, we may write
\begin{align*}
\sup_{\sigma \in [0,t]} \Big| Z^{i,N}_\sigma - Z^{j,M}_\sigma \Big|^2 \lesssim 
| z_0^{i,N} - z_0^{j,M} |^2 + \int_0^t \Big( \Big| Z^{i,N}_s - Z^{j,M}_s \Big|^2 + J_s^{N,M}  \Big) ds +
\sup_{\sigma \in [0,t]} \int_0^\sigma \zeta^5_s d\beta_s 
\end{align*}
and therefore
\begin{align}
\E \Big[ \sup_{\sigma \in [0,t]} \Big| Z^{i,N}_\sigma - Z^{j,M}_\sigma \Big|^2 \Big] \lesssim 
| z_0^{i,N} - z_0^{j,M} |^2 + \int_0^t \E [ J_s^{N,M} ]  ds +
\E \Big[ \sup_{\sigma \in [0,t]} \int_0^\sigma \zeta^5_s d\beta_s  \Big].
\label{this2}
\end{align}
Burkholder-Davis-Gundy's inequality gives
\begin{align*}
& \E \Big[ \sup_{\sigma \in [0,t]} \int_0^\sigma \zeta^5_s d\beta_s  \Big]  \lesssim \E \Big[ \Big( \int_0^t | \zeta^5_s|^2 ds \Big)^{1/2} \Big]
\\
& \hspace{15mm} \lesssim \E \Big[ 
\sup_{s \in [0,t]} \Big| Z^{i,N}_s - Z^{j,M}_s \Big| \Big] \Big(
 \int_0^t \Big|  K_R[\mu_s^N](Z^{i,N}_s) - K_R[\mu_s^M](Z^{j,M}_s)  \Big|^2 ds \Big)^{1/2}
   \Big]
\\
& \hspace{15mm} \lesssim \E \Big[ \sup_{s \in [0,t]} \Big| Z^{i,N}_s - Z^{j,M}_s \Big|^2 \Big]^{1/2} 
\Big(   \int_0^t \E \Big|  K_R[\mu_s^N](Z^{i,N}_s) - K_R[\mu_s^M](Z^{j,M}_s)  \Big|^2 ds
\Big)^{1/2}.
\end{align*}
Making use of \myref{glob_lip} again, we get
\begin{align*}
\E \Big[ \sup_{\sigma \in [0,t]} \int_0^\sigma \zeta^5_s d\beta_s  \Big] 
& \lesssim
 \E \Big[ \sup_{s \in [0,t]} \Big| Z^{i,N}_s - Z^{j,M}_s \Big|^2 \Big]^{1/2}
  \Big( \int_0^t  \E \Big| Z^{i,N}_s - Z^{j,M}_s \Big|^2 + E [ J_s^{N,M}] ds  \Big)^{1/2}  
\end{align*}
and we may come back \myref{this2} to obtain
\begin{align*}
\E \Big[ \sup_{\sigma \in [0,t]} \Big| Z^{i,N}_\sigma - Z^{j,M}_\sigma \Big|^2 \Big] \lesssim 
| z_0^{i,N} - z_0^{j,M} |^2 +\int_0^t \Big( \E \Big| Z^{i,N}_s - Z^{j,M}_s \Big|^2 + E [ J_s^{N,M}]  \Big) ds.
\end{align*}
Summing over $i$ and $j$ as in \myref{JNM} finally leads to
\begin{align*}
\E \Big[ J^{N,M}_t \Big] \lesssim W_2^2[\mu_0^N, \mu_0^M] + \int_0^t \E [ J^{N,M}_s ] ds.
\end{align*}
Gronwall's lemma hence gives $\E [ J_T^{N,M} ] \lesssim W_2^2[\mu_0^N, \mu_0^M]$ so that, coming back to \myref{JNM1}, we get
\begin{align*}
\E \Big[ W_2^2[\mu^N, \mu^M] \Big] \lesssim W_2^2[\mu_0^N, \mu_0^M] \to 0
\end{align*}
as $N, M$ are sent to infinity. We have shown that $\mu^N$ converges to some $\mu$ in the complete space $L^2(\Omega ; {\cal P}_2({\cal C}))$.
Let us now prove that $\mu$ satisfies the fixed-point identity \myref{fix_point}. First, since $\E \Big[ \int_{z \in {\cal C}} \| z \|_\infty^2 d\mu \Big]~<~\infty$, one could easily deduce from the sub-linearity of the coefficients that
\begin{align*}
\E [ \sup_{t \in [0,T]} \Big| Z^\mu_t(z) \Big|^2 \Big] < \infty,
\end{align*}
thereby guaranteeing that the solution $Z^\mu_t(z)$ of \myref{chap3-chara} is unique and global. Moreover, it is a well known fact (see e.g \cite{kunita}) that the flow $Z^\mu : z \in \mathbb{R}^{2d} \mapsto (Z^\mu_t(z))_{t \in [0,T]}$ is almost-surely continuous, so that the push-forward measure involved in \myref{fix_point} is indeed well-defined. This could be seen in this case by establishing a Kolmogorov estimate
$$
\E \Big[ \sup_{t \in [0,T]} \Big| Z^{\mu}_t(z) - Z^{\mu}_t(z')  \Big|^2 \Big] \lesssim |z-z'|^2, \; \; \forall z,z' \in \mathbb{R}^{2d}.
$$
Let us introduce the measure $\nu = (Z^\mu)^* \mu_0$. Introducing an optimal plan $\pi~\in~\Pi(\mu_0^N, \mu_0)$ so that
$$
W_2^2[\mu_0^N, \mu_0] = \int_{(z_1,z_2) \in (\mathbb{R}^{2d})^2} |z_1-z_2|^2 d\pi(z_1,z_2)
$$
we have this time
$$ 
W_2^2[\mu^N, \nu] \le \int_{(z_1,z_2) \in (\mathbb{R}^{2d})^2}
\sup_{t \in [0,T]} \Big| Z_t^{\mu^N}(z_1) - Z_t^{\mu}(z_2) \Big| ^2 d\pi(z_1,z_2) =: J^N_T.
$$
As in \myref{this1}, Itô's formula gives an expression of the form
\begin{align*}
d  \Big| Z_t^{\mu^N}(z_1) - Z_t^{\mu}(z_2)  \Big|^2 & \lesssim 
\Big( \Big| Z_t^{\mu^N}(z_1) - Z_t^{\mu}(z_2)  \Big|^2 + W_1^2[\mu_t^N, \mu_t] \Big) dt + \zeta_t d\beta_t
\\
& \lesssim 
\Big( \Big| Z_t^{\mu^N}(z_1) - Z_t^{\mu}(z_2)  \Big|^2 + W_2^2[\mu^N, \mu] \Big) dt + \zeta_t d\beta_t.
\end{align*}
Proceeding as in the first part of the proof, we eventually obtain
$$
\E \Big[  W_2^2[\mu^N, \nu ] \Big] \lesssim \E \Big[ W_2^2[\mu^N, \mu] \Big].
$$
Letting $N$ go to infinity, we conclude that $\mu = \nu$ a.s, that is exactly \myref{fix_point}. 

We may once again use the same arguments to prove that the fixed-point-like equation \myref{fix_point} has a unique solution: considering $\mu$ and $\nu$ such that $\mu = (Z^\mu)^* \mu_0$ a.s and $\nu = (Z^\nu)^* \nu_0$ a.s, we are led to 
$$
\mathbb{E} \Big[ W_2^2[\mu,\nu] \Big] \lesssim W_2^2[\mu_0, \nu_0]
$$
so that $\mu_0 = \nu_0$ implies $\mu = \nu$ a.s.

Finally, let us notice that any $\mu$ satisfying \myref{fix_point} defines a solution of \myref{spde_reg}. Indeed, for any test function $\Psi \in C_c^\infty(\mathbb{R}^{2d})$, Itô's formula gives exactly
$$
\Psi(Z^\mu_t(z)) = \Psi(z) + \int_0^t {\cal L}_R[\mu_s] \Psi(Z^\mu_s(z)) ds 
+ \int_0^t \nabla \Psi(Z^\mu_s(z)) \cdot K_R[\mu_s](Z^\mu_s(z)) d\beta_s.
$$
where ${\cal L}_R[\mu] \Psi$ is given by \myref{dual_operator}.
Since $\langle \Psi, \mu_t \rangle = \int_{z \in \mathbb{R}^{2d}} \Psi(Z^\mu_t(z)) d\mu_0(z)$, integrating with respect to $d\mu_0(z)$ using a stochastic Fubini theorem gives the expected result.
\end{proof}

\end{subsection}

\begin{subsection}{Flow of characteristics, regular solutions}

For some fixed $R > 0$, let us consider the unique solution $\mu$ of \myref{spde_reg} constructed in Proposition~\ref{mean_field}. Although this measure, as well as the associated characteristics $(Z^{\mu}_t)_{t \ge 0}$, depend on $R > 0$, we shall hide this dependence in the following expressions to avoid cluttering notation.
One may note from expressions \myref{reg}  that the coefficients 
$$
L^{CS}_R[\mu_t](z), \; \; \; L^{MT}_R[\mu_t](z), \; \; \;  S_R[\mu_t](z), \; \; \;  K_R[\mu_t](z)
$$
involved in the SDE \myref{chap3-chara} have the regularity $C^4(\mathbb{R}^{2d})$ in the $z$ variable. More precisely, assumption \myref{psi} guarantees that, for fixed $R > 0$, for $1 \le |\alpha| \le 4$,
\begin{align*}
\Big| \partial_z^\alpha L^{CS}_R[\mu_t] \Big| +\Big| \partial_z^\alpha  L^{MT}_R[\mu_t] \Big|
+ \Big| \partial_z^\alpha S_R[\mu_t] \Big| + \Big| \partial_z^\alpha K_R[\mu_t] \Big| \lesssim 1
\end{align*}
uniformly in $t \in [0,T]$ and $\omega \in \Omega$. In particular, the first, second and third order $z$-derivatives of the coefficients are globally Lipschitz-continuous.
As a result, Theorem 4.4 of \cite{kunita}, Chapter~II, yields
\begin{align}
\Big[  \; \forall t \in [0,T], \; \;  Z^\mu_t : z \in \mathbb{R}^{2d} \mapsto Z^\mu_t(z) \in \mathbb{R}^{2d}  \text{ is a $C^3$-diffeomorphism} \;  \Big], \; \; \Proba- a.s. 
\label{kunita_dif}
\end{align}

More precisely, for $0 \le s \le t$, denoting by $Z^\mu_{s,t}(z)$ the solution of the SDE
\begin{equation*}
\left\{
\begin{array}{l}
X^\mu_{s,t}(z) = x + \displaystyle{ \int_{r=s}^t V^\mu_{s,r}(z) dr },
\\ \vspace{-3mm} \\
V^\mu_{s,t}(z) = v + \displaystyle{ \int_{r=s}^t \Big( L^{CS}_R[\mu_r]+ L^{MT}_R[\mu_r] + S_R[\mu_r] \Big)(Z^\mu_{s,r}(z)) dr + 
\int_{r=s}^t K_R[\mu_r](Z^{\mu}_{s,r}(z)) d\beta_r }, 
\end{array}
\right.
\end{equation*}
the inverse map $(Z^\mu_{s,t})^{-1}(z)$ satisfies the corresponding backward SDE 
\begin{equation*}
\left\{
\begin{array}{l l}
(X^\mu_{s,t})^{-1}(z) & = x - \displaystyle{ \int_{r=s}^t (V^\mu_{r,t})^{-1}(z) dr },
\\ \vspace{-3mm} \\
(V^\mu_{s,t})^{-1}(z) & = v - \displaystyle{ \int_{r=s}^t \Big( L^{CS}_R[\mu_r]+ L^{MT}_R[\mu_r] + S_R[\mu_r] - \tld  S_R[\mu_r] \Big)((Z ^\mu_{r,t})^{-1}(z)) dr }
\\
& \hspace{50mm} - \displaystyle{
\int_{r=s}^t K_R[\mu_r]((Z^{\mu}_{r,t})^{-1}(z)) \widehat{d\beta_r} }, 
\end{array}
\right.
\end{equation*}
where $\tld S_R[\mu] =  \nabla_v K_R[\mu] K_R[\mu] $ and $\int_{s}^t \cdot \; \widehat{d\beta_t}$ denotes the backward Stratonovich integral (see again \cite{kunita} Theorem 7.3 for this result and p.194 for the definition of the backward integral). When $s=0$, we simply denote 
\begin{align*}
Z^\mu_t(z) = Z^\mu_{0,t}(z), \; \; \; (Z^\mu_t)^{-1}(z) = (Z^\mu_{0,t})^{-1}(z).
\end{align*}
In the particular case where the initial measure $\mu_0$ admits a density $f_0~\in~L^1(\mathbb{R}^{2d})$ with respect to the Lebesgue measure on $\mathbb{R}^{2d}$, for any test function $\Psi \in C_b(\mathbb{R}^{2d})$ we may write
\begin{align*}
\int_{z \in \mathbb{R}^{2d}} \Psi(z) d\mu_t(z)  =\int_{z \in \mathbb{R}^{2d}} \Psi(Z^\mu_t(z)) f_0(z) dz
= \int_{z \in \mathbb{R}^{2d}} \Psi(z) |J_t(z)^{-1} | f_0((Z^\mu_t) ^{-1}(z)) dz 
\end{align*}
where $J_t(z)^{-1}$ denotes the jacobian determinant
\begin{align}
J_t(z)^{-1} =  \det \Big[ D_z ((Z^{\mu}_t)^{-1})(z) \Big].
\label{chap3-J}
\end{align}
Considering a countable separating family of such test functions $\Psi$, from $\mu = (Z^\mu)^* f_0$ in ${\cal P}({\cal C})$, we deduce that
$$
\Big[  \forall t \in [0,T], \; \; d\mu_t(z) = f(t,z) dz \Big], \; \; \Proba-a.s,
$$
where
\begin{align}
f(t,z) = J_t(z)^{-1} f_0((Z^\mu_t) ^{-1}(z)).
\label{density}
\end{align}
Note that we may drop the absolute value in \myref{density} since $J_t(z)^{-1} \ge 0$ a.s.
Let us now give some estimates regarding the forward and backward characteristics.

\begin{proposition} \label{estim_reg}
~~\\
For all $p \ge 1$, let $f_0 \in L^1(\mathbb{R}^{2d})$ such that $ \int |z'|^p f_0(z) ds' < \infty$. 
Let $\mu \in L^2(\Omega ; {\cal P}_2({\cal C}))$ such that $\mu = (Z^\mu)^*f_0$ as in \myref{fix_point}.
Then for all $z,z_1,z_2 \in \mathbb{R}^{2d}$, $t,t_1,t_2 \in [0,T]$ and $1 \le k \le 3$,
\begin{align}
& \E \Big[ \sup_{t \in [0,T]} |Z_t^\mu(z)|^p   \Big] \lesssim 1 + |z|^p + \int |z'|^p f_0(z) dz', \label{bound1}
\\
&   \E \Big[ |D^k_z(Z_t^\mu)(z)|^p   \Big] \lesssim 1 + \int |z'|^p f_0(z) dz', \nonumber
\\
& \E \Big[ \sup_{t \in [0,T]} |Z_t^\mu(z_1) - Z_t^\mu(z_2)|^p \Big] \lesssim |z_1-z_2|^p,  \label{kolmo1}
\\
& \E \Big[ |(Z_t^\mu)^{-1}(z)|^p   \Big] \lesssim |z|^p +  \int |z'|^p f_0(z) dz', \nonumber
\\
&  \E \Big[ \Big| (Z^\mu_{t_1})^{-1})(z_1) -  (Z^\mu_{t_2})^{-1})(z_2)  \Big|^p \Big] \lesssim |z_1-z_2|^p + \Big( 1 + |z_1|^p  +  \int |z'|^p f_0(z) dz' \Big) |t_1-t_2|^{p/2}, \label{kolmo2}
\\
&  \E \Big[ |D^k_z((Z_t^\mu)^{-1})(z)|^p   \Big] \lesssim 1 + \int |z'|^p f_0(z) dz', \label{bound2}
\\
& \E \Big[ \Big| D^k_z ((Z^\mu_{t_1})^{-1})(z) -  D^k_z ((Z^\mu_{t_2})^{-1})(z')  \Big|^p \Big] \lesssim 
|z-z'|^p + |t_1 - t_2|^{p/2}.  \label{kolmo3}
\end{align}
The constants involved in $\lesssim$ depend on $p,k,T,R$ only.
\end{proposition}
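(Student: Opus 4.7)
The plan is to derive every estimate by the same pattern as in the proof of Proposition~\ref{estim1}: apply It\^o's formula (in its forward or backward variant), dominate the drift using \myref{sublinear} and \myref{glob_lip}, control the martingale part via Burkholder-Davis-Gundy, and close with Gronwall. The crucial preliminary observation is that, for fixed $R > 0$, the truncation by $\chi_R, \theta_R$, assumption \myref{psi} on $\psi, \tld \psi$, the smoothness of $\phi$ and of $F$, and the $R^{-1}$ shift in the denominator of \myref{reg} together force the coefficients $L^{CS}_R[\nu], L^{MT}_R[\nu], S_R[\nu], K_R[\nu]$ and all their $z$-derivatives up to order four to be bounded \emph{uniformly} in $\nu \in {\cal P}(\mathbb{R}^{2d})$ by a constant depending only on $R$. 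This turns the SDEs satisfied by the variations $D^k_z Z^\mu_t$ into linear systems with uniformly bounded coefficients.

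I would first establish \myref{bound1}: applying It\^o to $|Z^\mu_t(z)|^p$, using \myref{sublinear} and the identity $\mu_t = (Z^\mu_t)^* f_0$ to rewrite $\int|z'|^p d\mu_t(z') = \int |Z^\mu_t(z')|^p f_0(z') dz'$, and then averaging against $f_0(z') dz'$ yields a closed Gronwall inequality for $\E \big[\sup_{s \le t}\int |Z^\mu_s(z')|^p f_0(z') dz'\big]$ exactly of the form \myref{ito2}; reinjecting into the pointwise estimate closes \myref{bound1}. The spatial Lipschitz estimate \myref{kolmo1} is direct: the two trajectories feel the \emph{same} random measure $\mu$, so only the spatial part of \myref{glob_lip} contributes and a BDG+Gronwall argument on $|Z^\mu_t(z_1) - Z^\mu_t(z_2)|^p$ closes the bound. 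The $L^p$ estimates on $D^k_z Z^\mu_t$ for $1 \le k \le 3$ follow by induction on $k$: the $k$-th variation satisfies a linear SDE with uniformly bounded coefficients $D_z L_R(Z^\mu_t), D_z K_R(Z^\mu_t)$ plus a Fa\`a di Bruno forcing that is polynomial in the lower-order variations whose $L^p$ norms have just been controlled.

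All bounds on the backward flow $(Z^\mu_{s,t})^{-1}$, including the derivative estimates \myref{bound2}, are obtained by copying these arguments onto the backward SDE recalled just before the statement: its drift differs from the forward one only by the smooth sub-linear term $\tld S_R[\mu] = \nabla_v K_R[\mu] K_R[\mu]$, which enjoys identical uniform bounds. For the joint space-time H\"older estimates \myref{kolmo2} and \myref{kolmo3} I would use the decomposition
$$(Z^\mu_{t_1})^{-1}(z_1) - (Z^\mu_{t_2})^{-1}(z_2) = \bigl[(Z^\mu_{t_1})^{-1}(z_1) - (Z^\mu_{t_1})^{-1}(z_2)\bigr] + \bigl[(Z^\mu_{t_1})^{-1}(z_2) - (Z^\mu_{t_2})^{-1}(z_2)\bigr].$$
The first bracket is controlled in $L^p$ by $|z_1-z_2|^p$ through the spatial bound just obtained for the inverse; the second is handled via the semi-group identity $(Z^\mu_{0,t_1})^{-1} = (Z^\mu_{0,t_2})^{-1} \circ (Z^\mu_{t_2,t_1})^{-1}$, which together with the Lipschitz property of $(Z^\mu_{0,t_2})^{-1}$ reduces it to the $L^p$-norm of the SDE-increment $(Z^\mu_{t_2,t_1})^{-1}(z_2)-z_2$; this in turn is bounded by $C(1 + |z_2|^p + \int |z'|^p f_0(z')dz')|t_1-t_2|^{p/2}$ after BDG and \myref{sublinear}. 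The same splitting applied to the linear SDE for $D^k_z(Z^\mu)^{-1}$ gives \myref{kolmo3}. The one genuinely non-mechanical step, and the main obstacle I anticipate, is verifying the uniform (in $\nu$) boundedness of the higher $z$-derivatives of $L^{MT}_R[\nu]$: because of the ratio in \myref{reg}, one must control iterated Leibniz and quotient rule expansions, and it is precisely the $R^{-1}$ lower bound on the denominator (together with \myref{phi}) that prevents unbounded negative powers of $\int \phi(x-y)d\nu(y,w)$ from appearing.
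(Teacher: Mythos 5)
The paper gives essentially no proof of this proposition: it simply records, just above the statement, that the truncated coefficients have uniformly bounded $z$-derivatives up to order four (hence globally Lipschitz derivatives up to order three), invokes Theorem 4.4 of Kunita's Chapter II for the $C^3$-diffeomorphism property, and then declares the estimates to be ``deduced in a classical manner from the sub-linearity and the global Lipschitz continuity of the coefficients.'' Your proposal reconstructs precisely this classical argument, and the one step you flag as non-mechanical --- the uniform-in-$\nu$ bound on $\partial^\alpha_z L^{MT}_R[\nu]$ owing to the $R^{-1}$ shift in the denominator of \myref{reg} together with \myref{phi} --- is exactly the observation the authors state verbatim before the proposition; so you have isolated the same key technical point. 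The remaining details (It\^o plus BDG plus Gronwall for \myref{bound1} and \myref{kolmo1}, induction on the order of the variation for $D^k_z Z^\mu_t$, the backward SDE for the inverse flow, and the semigroup splitting for the joint space-time H\"older estimates \myref{kolmo2}--\myref{kolmo3}) are consistent with the paper's intent, and your averaging trick against $f_0(z')\,dz'$ to close the Gronwall loop mirrors the argument of Proposition \ref{estim1}. In short, the proposal is correct and takes the same route the paper takes, merely making explicit what the authors leave implicit.
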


These estimates are deduced in a classical manner from the sub-linearity and the global Lipschitz continuity of the coefficients of the equations satisfied by $Z_t^\mu(z)$ and $(Z_t^\mu)^{-1}(z)$.
Applying Kolmogorov's lemma to \myref{kolmo2} and \myref{kolmo3}, we deduce that, for all $0 \le k \le 3$,
\begin{align}
(t,z) \in [0,T] \times \mathbb{R}^{2d} \mapsto D^k((Z^\mu_t)^{-1})(z)
\label{chap3-continuity}
\end{align}
is continuous $\Proba-a.s$. Additionally, we may deduce from \myref{bound1} and \myref{kolmo1} the following estimate: given a compact set $K \subset \mathbb{R}^{2d}$ and $z_0 \in K$, for any $p \ge 1$ and $\alpha \in (0,1)$,
\begin{align}
\E \Big[ \sup_{z \in K} \sup_{t \in [0,T]} | Z_t^\mu(z) |^p \Big] \lesssim \E \Big[ \sup_{t \in [0,T]} | Z_t^\mu(z_0) |^p \Big] + \E \Big[ \| Z^\mu \|_{C^\alpha_z L^\infty_t}^p \Big] \text{diam}(K)^{\alpha p} \le C_{K,T,p} < \infty.
\label{ad_estimate}
\end{align}
In \myref{ad_estimate}, 
$\text{diam}(K) = \sup \left\{| z_1 - z_2 | , \, z_1, z_2 \in K\right\}$ 
and $\| Z^\mu \|_{C^\alpha_z L^\infty_t}$ denotes the $\alpha$-Hölder semi-norm 
$$
\| Z^\mu \|_{C^\alpha_z L^\infty_t} = \sup_{z_1 \neq z_2 \in \mathbb{R}^{2d}} \sup_{t \in [0,T]}   \frac{ | Z^\mu_t(z_1) - Z^\mu_t(z_2) | }{|z_1-z_2|^\alpha}
$$
which satisfies indeed $\E \Big[ \| Z^\mu \|_{C^\alpha_z L^\infty_t}^p \Big] \lesssim 1$ by applying Kolmogorov's lemma to \myref{kolmo1}. We may now establish the following result.

\begin{proposition}[Regular solution] \label{regular}
Let $f_0 \in C_c^2(\mathbb{R}^{2d})$. Let $\mu \in L^2(\Omega ; {\cal P}_2({\cal C}))$ such that 
$\mu~=~(Z^\mu)^* f_0$ as in \myref{fix_point}, and let $f(t,z)$ be defined as in \myref{density}, so that $\mu$ may be represented as $d\mu_t(z) = f(t,z) dz$. Then $f$ is a regular solution of \myref{spde_reg} in the following sense: 
\begin{itemize}
\item
$\Proba$-almost surely, $f(t, \cdot) \in C^2(\mathbb{R}^{2d})$ for all $t \in [0,T]$ and the maps
$$
(t, z) \in [0,T] \times \mathbb{R}^{2d} \mapsto \partial_z^\alpha f(t,z), \; \; 0 \le |\alpha| \le 2
$$
are continuous.

\item Denoting the operator
\begin{align}
({\cal L}_R[f])^* g = - v \cdot \nabla_x g - \nabla_v \cdot \Big( (L_R^{CS}[f] + L_R^{MT}[f] + S_R[f] ) g  \Big)
+ \frac{1}{2} \nabla_v \cdot \nabla_v \Big( K_R[f] K_R[f]^T g \Big),
\label{chap3-operator}
\end{align}
we have for all $z \in \mathbb{R}^{2d}$, $t \in [0,T]$, $\Proba$-almost surely,
\begin{align}
& f(t,z)   =  f_0(z) + \int_0^t ({\cal L}_R[f(s)])^* f(s,z) ds  -\int_0^t  \nabla_v \cdot \Big( K_R[f(s)] f(s,z) \Big) d\beta_s. \label{chap3-strong}
\end{align}
\end{itemize}

\end{proposition}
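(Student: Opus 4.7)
The plan is to derive the pointwise identity \myref{chap3-strong} from the weak formulation already established in Proposition~\ref{mean_field}, by integrating by parts against arbitrary test functions, once the required $C^2$-regularity of $f(t,z)$ is secured from the representation \myref{density}. For the regularity part, I would start from $f(t,z) = J_t(z)^{-1} f_0((Z^\mu_t)^{-1}(z))$. Thanks to assumptions \myref{psi} and \myref{phi}, the coefficients of the SDE \myref{chap3-chara} are $C^4$ in $z$ with globally Lipschitz derivatives up to order three, so Kunita's theorem yields \myref{kunita_dif}, and in particular $(Z^\mu_t)^{-1} \in C^3_z$ together with the bound \myref{bound2}. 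Combining this with $f_0 \in C_c^2$, the chain rule gives $f(t,\cdot) \in C^2(\mathbb{R}^{2d})$ $\Proba$-a.s. Joint continuity of $(t,z) \mapsto \partial^\alpha_z f(t,z)$ for $|\alpha| \le 2$ then follows by applying Kolmogorov's continuity criterion to the moment estimates \myref{kolmo2} and \myref{kolmo3}, exactly as in \myref{chap3-continuity}: these give joint Hölder regularity of $(Z^\mu_t)^{-1}$ and of its spatial derivatives up to order three, hence of $J_t(z)^{-1}$, $f_0((Z^\mu_t)^{-1}(z))$ and their first two $z$-derivatives.

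For the SPDE, I would fix $\Psi \in C_c^\infty(\mathbb{R}^{2d})$ and start from the weak identity furnished by Proposition~\ref{mean_field},
$$\langle \mu_t, \Psi\rangle = \langle \mu_0, \Psi\rangle + \int_0^t \bigl\langle \mathcal{L}_R[\mu_s]\Psi, \mu_s\bigr\rangle ds + \int_0^t \bigl\langle K_R[\mu_s]\cdot \nabla_v \Psi, \mu_s\bigr\rangle d\beta_s.$$
Substituting $d\mu_s(z) = f(s,z)\, dz$ and integrating by parts in $z$ pointwise in $(s,\omega)$ — legitimate now that $f(s,\cdot)\in C^2$ and $\Psi$ has compact support, so no boundary terms arise — transfers $v\cdot \nabla_x$, $(L^{CS}_R+L^{MT}_R+S_R)\cdot\nabla_v$ and $\tfrac12 K_R^i K_R^j \partial^2_{v_i v_j}$ from $\Psi$ onto the $f$-factor, rewriting $\bigl\langle \mathcal{L}_R[f(s)]\Psi, f(s)\bigr\rangle$ as $\bigl\langle \Psi, (\mathcal{L}_R[f(s)])^* f(s)\bigr\rangle$, and likewise turns $\bigl\langle K_R[f(s)]\cdot \nabla_v \Psi, f(s)\bigr\rangle$ into $-\bigl\langle \Psi, \nabla_v\cdot(K_R[f(s)] f(s))\bigr\rangle$. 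A stochastic Fubini argument, justified by the $L^2$-integrability coming from Proposition~\ref{estim_reg} combined with the smoothness of $\Psi$, then allows to pull the spatial integration inside the stochastic integral and produces
$$\int_{\mathbb{R}^{2d}} \Psi(z)\left[f(t,z) - f_0(z) - \int_0^t (\mathcal{L}_R[f(s)])^* f(s,z)\, ds + \int_0^t \nabla_v\!\cdot\!\bigl(K_R[f(s)] f(s,z)\bigr) d\beta_s\right] dz = 0.$$

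To conclude, I would select a countable dense family of test functions $\Psi$ in $C_c^\infty(\mathbb{R}^{2d})$, deduce that on a single $\Proba$-full event the bracket above vanishes $dz$-almost everywhere, and then upgrade this into a genuinely pointwise identity for every $(t,z)$ by continuity of each of the three terms: the drift integral is continuous in $(t,z)$ by the joint regularity established in the first step, while the Itô integral admits a jointly continuous version in $(t,z)$ thanks to a Kolmogorov estimate applied to BDG bounds on the increments of $z \mapsto \int_0^t \nabla_v\!\cdot\!(K_R[f(s)] f(s,z))\, d\beta_s$, using again the $C^2$-regularity of $f$ and the sub-linearity of $K_R$. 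I expect this final step — promoting an a.e.\ identity into a pointwise one, via sufficient joint regularity in $(t,z)$ of the stochastic integral — to be the main technical obstacle, as it requires careful bookkeeping of the $z$-dependence through the backward flow $(Z^\mu_t)^{-1}$ and the BDG control of $z$-increments uniformly in $t$.
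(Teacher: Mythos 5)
Your proposal takes essentially the same route as the paper: first deduce $C^2$-regularity of $f$ from the representation $f(t,z)=J_t(z)^{-1}f_0((Z^\mu_t)^{-1}(z))$ together with Kunita's flow estimates and the Kolmogorov bounds \myref{kolmo2}--\myref{kolmo3}, then obtain \myref{chap3-strong} from the weak formulation of Proposition~\ref{mean_field} by a stochastic Fubini argument (justified via \myref{bound2}) and an upgrade from almost-every $z$ to every $z$ by continuity in $z$. You are somewhat more explicit than the paper about the countable-dense-family step and the Kolmogorov/BDG argument producing a $z$-continuous version of the stochastic integral, both of which the paper compresses into a brief appeal to \myref{this_ineq}, but the underlying argument is the same.
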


\begin{proof}

As a consequence of \myref{chap3-continuity}, it is clear from expression \myref{density} that the first condition is met. Furthermore, $f(t, \cdot)$ is almost surely compactly supported, uniformly in $t \in [0,T]$, with
\begin{align*}
\forall t \in [0,T], \; \; \text{Supp}(f(t, \cdot)) \subset \left\{ z \in \mathbb{R}^{2d}, \; \;  |z| \le \sup_{z' \in K} \sup_{t \in [0,T]} | Z^\mu_t(z') | \right\}.
\end{align*}
Since $f$ is a solution of \myref{spde_reg}, for any $\Psi \in C_c^\infty(\mathbb{R}^{2d})$, $\Proba$-almost surely,
$$ \langle f(t) , \Psi \rangle   =  \langle f_0, \Psi \rangle 
+  \int_0^t \Bigl< ({\cal L}_R[f(s)])^* f(s) , \Psi \Bigr> ds
 - \int_0^t \langle \nabla_v \cdot \Big( K_R[f(s)] f(s) \Big) , \Psi \rangle d\beta_s.
$$
We can then interchange the integrals
$$
\int_0^t \langle \cdot , \Psi \rangle ds = \langle \int_0^t  \cdot , \Psi \rangle ds, \hspace{5mm}
\int_0^t \langle \cdot , \Psi \rangle d\beta_s = \langle \int_0^t  \cdot , \Psi \rangle d\beta_s.
$$
Since all functions are compactly supported (for fixed $\omega \in \Omega$), the integrals with respect to $ds$ cause no issue. As for the stochastic integral, we may use a stochastic Fubini theorem, as long as
$$
\int_0^t \int_{z \in \mathbb{R}^{2d}} 
\E \Big[ \Big| \nabla_v \cdot \Big( K_R[f(s)] f(s) \Big) \Psi(z) \Big|^2 \Big] dz ds < \infty.
$$
From expressions \myref{reg}, we deduce (for fixed $R > 0$)
\begin{align*}
& |K_R[f(t)](z)| + | \nabla_v \cdot (K_R[f(t)])(z) | \lesssim 1,
\end{align*}
From expression \myref{density}, (and $|\det[A]| \lesssim |A|^{2d}$) it is clear that
$$
|f(t,z)|^2 + |\nabla_z f(t,z)|^2 \lesssim \Big( 1+  \max_{k=1,2} \Big| D^k_z((Z^\mu_t)^{-1})(z) \Big|^q \Big) \Big(  \| f_0 \|_{L^\infty}^2 + \| \nabla_z f_0 \|_{L^\infty}^2 \Big)
$$
for some $q = q(d) \ge 1$. We may hence write
\begin{align}
 \mathbb{E} \Big| \nabla_v \cdot \Big( K_R[f(s)] f(s) \Big)(z) \Big|^2
& \lesssim
1 + \E \Big[   \max_{k=1,2} \Big| D^k_z((Z^\mu_t)^{-1})(z) \Big|^q   \Big]  \lesssim 1
\label{this_ineq}
\end{align}
thanks to the bound \myref{bound2}. It follows that
$$
\int_0^t \int_{z \in \mathbb{R}^{2d}} 
\E \Big[ \Big| \nabla_v \cdot \Big( K_R[f(s)] f(s) \Big) \Psi(z) \Big|^2 \Big] dz ds 
\lesssim \int_{z \in \mathbb{R}^{2d}} | \Psi(z) |^2 dz < \infty.
$$
Consequently, $\Proba$-a.s, \myref{chap3-strong} holds when integrated against any $\Psi \in C_c^\infty(\mathbb{R}^{2d})$. We deduce that \myref{chap3-strong} holds for almost every $z \in \mathbb{R}^{2d}$. Since both sides of \myref{chap3-strong} are continuous with respect to $z$ (thanks to \myref{this_ineq} again), we conclude that the equality holds for every $z \in \mathbb{R}^{2d}$.

\end{proof}

\end{subsection}

\end{section}

\begin{section}{Weak convergence of approximate solutions} \label{sec3}

From now on, let us fix some initial data $f_0$ satisfying (at least) for some $\delta > 1$,
$$
f_0 \ge 0, \hspace{5mm} \int_{z \in \mathbb{R}^{2d}} f_0(z) dz = 1, \hspace{5mm}  \int_{z \in \mathbb{R}^{2d}} |z|^\delta f_0(z) dz < \infty.
$$
For any $R > 0$, considering the particle system \myref{part_reg}, with initial data satisfying 
$$\mu_0^N(dz) \to f_0(z) dz \text{ in } {\cal P}_\delta(\mathbb{R}^{2d}),$$
we may introduce the solution $\mu_{R}$ of \myref{spde_reg} constructed in Proposition~\ref{mean_field}. As previously discussed, we naturally identify $\mu^R$ with its density ${f^{R}}~=~({f^{R}}(t,z))_{t \in [0,T], z \in \mathbb{R}^{2d}}$ defined by \myref{density}.

\begin{subsection}{Uniform estimates} \label{sec:unif_estimates}

In this section, we shall establish some estimates on  ${f^{R}}$  uniformly on the regularization parameter~$R$.

\begin{proposition} \label{Lp_estimate}
Let $p \ge 1$ and $f_0 \in L^p(\mathbb{R}^{2d})$. Then ${f^{R}} \in L^\infty([0,T] ; L^p(\mathbb{R}^{2d}))$ a.s, with the estimate
$$
\E \Big[ \sup_{t \in [0,T]} \| {f^{R}}(t) \|_{L^p}^p \Big] \lesssim \| f_0 \|_{L^p}^p.
$$
The constant involved in $\lesssim$ depends on $p$ and $T$ only.
\end{proposition}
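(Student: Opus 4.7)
The strategy is to apply It\^o's formula to $\int (f^R(t,z))^p dz$ using the strong form \myref{chap3-strong} provided by Proposition~\ref{regular}, and to derive a Gronwall-type inequality whose constant does not depend on $R$. I first reduce to smooth initial data $f_0 \in C_c^2(\mathbb{R}^{2d})$, for which Proposition~\ref{regular} produces a classical solution $f^R(t,\cdot) \in C^2$ that is compactly supported in $z$ uniformly on $[0,T]$, justifying all subsequent integrations by parts; the general case $f_0 \in L^p$ follows by approximation in $L^p \cap L^1$, the stability of the flow-based solution from Proposition~\ref{mean_field}, and Fatou's lemma.

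Applying It\^o's formula pointwise in $z$ to $\phi(f)=f^p$, integrating in $z$, and performing one integration by parts in $v$ on the first-order drift and the stochastic integrand yields
\begin{align*}
d \! \int (f^R)^p dz & = -(p-1) \! \int (\nabla_v \cdot b_R) (f^R)^p dz \, dt + \frac{p}{2} \! \int (f^R)^{p-1} \nabla_v \cdot \nabla_v (K_R K_R^T f^R) dz \, dt \\
& \quad + \frac{p(p-1)}{2} \! \int (f^R)^{p-2} \bigl( \nabla_v \cdot (K_R f^R) \bigr)^2 dz \, dt - (p-1) \! \int (\nabla_v \cdot K_R) (f^R)^p dz \, d\beta_t,
\end{align*}
where $b_R = L_R^{CS}[f^R] + L_R^{MT}[f^R] + S_R[f^R]$ and the transport contribution vanishes since $\nabla_x \cdot v = 0$. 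The decisive uniform ingredient is that, inspecting \myref{reg} and using the bound $|\nabla_v \cdot \theta_R| \lesssim 1$ from \myref{theta}, the quantities $|\nabla_v \cdot b_R|$, $|\partial_{v_i} K_R^j|$ and $|\nabla_v \cdot K_R|$ are all bounded by constants independent of $R$, $f$ and $z$.

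The delicate step is the combined treatment of the second-order drift and the It\^o correction. After two further integrations by parts in $v$, the second-order drift produces a term $-\frac{p(p-1)}{2} \int (f^R)^{p-2} (K_R \cdot \nabla_v f^R)^2 dz$ plus lower-order contributions, while expanding the square in the It\^o correction produces $+\frac{p(p-1)}{2} \int (f^R)^{p-2} (K_R \cdot \nabla_v f^R)^2 dz$ plus similar lower-order contributions. The a priori uncontrollable quadratic form in $\nabla_v f^R$ cancels exactly, and further integrations by parts reduce what remains to
$$ \frac{p-1}{2} \! \int (f^R)^p \sum_{i,j}(\partial_{v_i} K_R^j)(\partial_{v_j} K_R^i) dz + \frac{(p-1)^2}{2} \! \int (f^R)^p (\nabla_v \cdot K_R)^2 dz, $$
which is bounded by $C(p) \int (f^R)^p dz$ in view of the uniform bounds above.

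Putting everything together and writing $N_t$ for the resulting local martingale (whose quadratic variation satisfies $\langle N \rangle_t \leq C \int_0^t ( \int (f^R)^p dz )^2 ds$ by the uniform boundedness of $\nabla_v \cdot K_R$), one obtains $\int (f^R(t))^p dz \leq \int f_0^p dz + C(p) \int_0^t \int (f^R(s))^p dz \, ds + N_t$. Taking a supremum in $t$, applying Burkholder-Davis-Gundy together with Young's inequality to absorb $\frac{1}{2} \E \sup_{s \leq t} \int (f^R)^p dz$ into the left-hand side, and then Gronwall's lemma yield the desired estimate with a constant depending only on $p$ and $T$. The main technical obstacle is precisely the algebraic cancellation described above: without it, both the second-order drift and the It\^o correction contain an $\int f^{p-2} |K_R \cdot \nabla_v f|^2$ contribution that cannot be controlled by $\int f^p dz$, and it is only the precise structural agreement between the It\^o form of a continuity-type SPDE and the quadratic variation of its divergence-form noise which ensures the estimate is uniform in $R$.
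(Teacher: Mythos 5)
Your proposal follows essentially the same route as the paper: reduce to $f_0\in C^2_c$, apply It\^o's formula to $(f^R)^p$ pointwise using the strong form of Proposition~\ref{regular}, integrate in $z$, isolate the exact cancellation of the $(f^R)^{p-2}(K_R\cdot\nabla_v f^R)^2$ contributions between the second-order drift and the It\^o correction, observe that what remains is controlled uniformly in $R$ because the relevant $v$-derivatives of the regularized coefficients are uniformly bounded, and close with Burkholder--Davis--Gundy, Young's inequality and Gronwall, extending to general $f_0\in L^p$ by approximation. In fact, your final expression for the sum $I_2+I_3$, namely
$\frac{p-1}{2}\int (f^R)^p\sum_{i,j}(\partial_{v_i}K_R^j)(\partial_{v_j}K_R^i)\,dz+\frac{(p-1)^2}{2}\int (f^R)^p(\nabla_v\cdot K_R)^2\,dz$,
is the correct one: the paper's stated identity $I_2+I_3=\frac{p(p-1)}{2}\int (f^R)^p|\nabla_v\cdot K_R|^2\,dz$ drops the trace term $\sum_{i,j}(\partial_{v_i}K_R^j)(\partial_{v_j}K_R^i)$ and holds as written only if this equals $(\nabla_v\cdot K_R)^2$ (true in dimension $d=1$, not in general). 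The conclusion is unaffected either way, but do note that controlling your extra trace term requires the full matrix $\nabla_v\theta_R$ to be bounded uniformly in $R$, which is natural from the construction but slightly stronger than the stated hypothesis $|\nabla_v\cdot\theta_R|\lesssim 1$ in \myref{theta}.
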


\begin{proof}
Let us start by considering $f_0 \in C^2_c(\mathbb{R}^{2d})$ supported in some compact $K \subset \mathbb{R}^{2d}$. Then ${f^{R}}$ is a regular solution of \myref{spde_reg} in the sense of Proposition \ref{regular}. For any $z \in \mathbb{R}^{2d}$, applying Itô's formula to $| {f^{R}}(t,z) |^p$ hence gives
\begin{align}
 | {f^{R}}(t,z)|^p & = f_0(z)^p  -p \int_0^t  | {f^{R}}(s,z)|^{p-1} \Big[ v \cdot \nabla_x {f^{R}}(s,z) +
 \nabla_v \cdot \Big( B_{R}[{f^{R}}(s)]  {f^{R}}(s,z)  \Big) \Big] ds
 \nonumber \\ &
 + \frac{p}{2} \int_0^t    | {f^{R}}(s,z)|^{p-1}  \nabla_v \cdot \nabla_v \cdot \Big( K_R[{f^{R}}(s)] K_R[{f^{R}}(s)]^T {f^{R}}(s,z) \Big)  ds
\nonumber  \\
 & + \frac{p(p-1)}{2} \int_0^t | {f^{R}}(s,z)|^{p-2} \Big| \nabla_v \cdot \Big( K_R[{f^{R}}(s)] {f^{R}}(s,z) \Big) \Big|^2 ds
\nonumber  \\
 & - p \int_0^t | {f^{R}}(s,z)|^{p-1} \nabla_v \cdot \Big( K_R[{f^{R}}(s)] {f^{R}}(s,z) \Big) d\beta_s,
 \label{itop}
\end{align}
where $B_{R}[f]$ denotes the drift coefficient $L^{CS}_R[f] + L^{MT}_R[f] + S_R[f]$.
We can now integrate \myref{itop} with respect to $dz$ and interchange the integrals: 
$$
 \int_{z \in \mathbb{R}^{2d}}  \int_0^t \cdot \; \; ds dz = 
  \int_0^t  \int_{z \in \mathbb{R}^{2d}}  \cdot \; \; dz ds, 
  \hspace{5mm}
   \int_{z \in \mathbb{R}^{2d}}  \int_0^t \cdot \; \; d\beta_s dz = 
  \int_0^t  \int_{z \in \mathbb{R}^{2d}}  \cdot \; \; dz d\beta_s,
$$
Since all the integrands in \myref{itop} are compactly supported  in $z$ uniformly for $t \in [0,T]$ (for fixed $\omega \in \Omega$), the integrals with respect to $ds$ cause no issue. As for the stochastic integral, we may use a stochastic Fubnini theorem if we can justify that
\begin{align}
{\cal E} := \E \Big[ \int_0^T \int_{z \in \mathbb{R}^{2d}}
 \Big| {f^{R}}(s,z)^{p-1}  \nabla_v \cdot \Big( K_R[{f^{R}}(s)] {f^{R}}(s,z) \Big)  \Big|^2 dz ds \Big] < \infty.
 \label{fubini}
\end{align}
As in the proof of Proposition \ref{regular}, one can see that (with $\mu(dz) = f^R(z) dz$)
$$
\Big| {f^{R}}(s,z)^{p-1}  \nabla_v \cdot \Big( K_R[{f^{R}}(s)] {f^{R}}(s,z) \Big)  \Big|^2
\lesssim 
1 + \max_{k=1,2}  \Big| D^k_z((Z^\mu_t)^{-1})(z) \Big|^q 
$$
for some $q = q(d,p) \ge 1$. Denoting $S =  \displaystyle{ \sup_{t \in [0,T]} \sup_{z' \in K} |Z_t^\mu(z')| }$, it follows that
\begin{align*}
\int_{z \in \mathbb{R}^{2d}}
 \Big| {f^{R}}(s,z)^{p-1}  \nabla_v \cdot \Big( K_R[{f^{R}}(s)] {f^{R}}(s,z) \Big)  \Big|^2 dz
 \lesssim \int_{|z| \le S} \max_{k=1,2} \Big| D^k_z((Z^\mu_t)^{-1})(z) \Big|^q  dz
\end{align*}
and, fixing some $m > 2d$,
\begin{align*}
&  \int_{|z| \le S}  \max_{k=1,2} \Big| D^k_z((Z^\mu_t)^{-1})(z) \Big|^q  dz
 =  \int_{|z| \le S}  (1+ |z|)^{m/2} \frac{\max_{k=1,2} \Big| D^k_z((Z^\mu_t)^{-1})(z) \Big|^q }{(1+ |z|)^{m/2}}    dz
\\
& \hspace{10mm} \lesssim \int_{|z| \le S} (1+|z|)^m dz + 
\int_{z \in \mathbb{R}^{2d}}  \max_{k=1,2} \Big| D^k_z((Z^\mu_t)^{-1})(z) \Big|^{2q} \frac{dz}{(1+|z|)^m}
\\
& \hspace{10mm} \lesssim (1 + S^{m+1}) + 
\int_{z \in \mathbb{R}^{2d}}  \max_{k=1,2} \Big| D^k_z((Z^\mu_t)^{-1})(z) \Big|^{2q} \frac{dz}{(1+|z|)^m}.
\end{align*}
Using  \myref{ad_estimate} and \myref{bound2}, we deduce
$$
{\cal E}\lesssim 1+ \E \Big[ S^{ m+1} \Big] + \int_{z \in \mathbb{R}^{2d}} 
\E \Big[ \max_{k=1,2} \Big| D^k_z((Z^\mu_t)^{-1})(z) \Big|^{2q} \Big] \frac{dz}{(1+|z|)^m} < \infty
$$
We may hence integrate \myref{itop} with respect to $z$, which leads to
\begin{align}
\int_{\mathbb{R}^{2d}} | {f^{R}}(t,z)|^p dz & = 
\int_{\mathbb{R}^{2d}} f_0(z)^p dz  + I_1(t) + I_2(t) + I_3(t) + I_4(t), \label{chap3-split}
\end{align}
where
\begin{align*}
& I_1(t) =  -p \int_0^t \int_{\mathbb{R}^{2d}}  | {f^{R}}(s,z)|^{p-1} \Big[ v \cdot \nabla_x {f^{R}}(s,z) +
 \nabla_v \cdot \Big( B_{R}[{f^{R}}(s)]  {f^{R}}(s,z)  \Big) \Big] dz ds,
 \\ 
&  I_2(t) =  \frac{p}{2} \int_0^t  \int_{\mathbb{R}^{2d}}   | {f^{R}}(s,z)|^{p-1}  \nabla_v^2 \Big( K_R[{f^{R}}(s)] K_R[{f^{R}}(s)]^T {f^{R}}(s,z) \Big)  dz ds,
 \\
&  I_3(t) =  \frac{p(p-1)}{2} \int_0^t \int_{\mathbb{R}^{2d}} | {f^{R}}(s,z)|^{p-2} \Big| \nabla_v \cdot \Big( K_R[{f^{R}}(s)] {f^{R}}(s,z) \Big) \Big|^2 dz ds, 
 \\
&  I_4(t) =  - p \int_0^t \int_{\mathbb{R}^{2d}} | {f^{R}}(s,z)|^{p-1} \nabla_v \cdot \Big( K_R[{f^{R}}(s)] {f^{R}}(s,z) \Big) dz d\beta_s. 
\end{align*}
Simple calculations lead to the classical identity
\begin{align}
\int |f^R|^{p-1} \nabla_v \cdot ( B_R f^R) dz = \frac{p-1}{p} \int |f^R|^p ( \nabla_v \cdot B_R ) dz
\label{simple_identity}
\end{align}
so that 
\begin{align*}
I_1(t) & =  \frac{p-1}{p} \int_0^t \int_{\mathbb{R}^{2d}} {|f^{R}(s,z)|}^p ( \nabla_v \cdot B_{R} [f(s)](z)) dz ds 
\\
& \le \Big\| \nabla_v \cdot B_{R}[{f^{R}}] \Big\|_{L_{t,z}^\infty([0,T] \times \mathbb{R}^{2d})} \int_0^t \int_{\mathbb{R}^{2d}} |f^{R}(s,z)|^p dz ds.
\end{align*}
From expressions \myref{reg}, we derive
\begin{align*}
& \nabla_v \cdot L^{CS}_R[f](z) =  \int_{\mathbb{R}^{d} \times \mathbb{R}^{d}} \chi_R(x-y) \psi(x-y) \nabla_v \cdot \Big( \theta_R(w-v) \Big) f(y,w) dy dw,
\\
& \nabla_v \cdot L^{MT}_R(z) = -d,
\\
& \nabla_v \cdot K_R[f](z) = \int_{\mathbb{R}^{d} \times \mathbb{R}^{d}} \chi_R(x-y) \tld \psi(x-y) \nabla_v \cdot \Big( \theta_R(w-v) \Big) f(y,w) dy dw,
\\
& \nabla_v \cdot S_R[f](z) = - \frac{1}{2} \Big(  \int_{\mathbb{R}^d \times \mathbb{R}^d}
 \tld \psi(x-y) f(y,w) dy dw \Big) \nabla_v \cdot K_R[f](z)
\end{align*}
Assumptions \myref{psi}, \myref{theta} guarantee that these terms are bounded uniformly in $t \in [0,T]$, $z \in \mathbb{R}^{2d}$, $\omega \in \Omega$, $R > 0$, so that
$$
I_1(t) \lesssim \int_0^t \int_{\mathbb{R}^{2d}} |f^{R}(s,z)|^p dz ds.
$$
Similarly, we have the following identity:
\begin{align*}
& \int |f^R|^{p-1} \nabla_v^2 (K_R K_R^T f^R ) dz + (p-1) \int |f^R|^{p-2} | \nabla_v \cdot (K_R  f^R ) |^2 dz 
\\
& \; \;  = (p-1) \int |f^R|^{p-2} \Big[ | \nabla_v \cdot (K_R f^R)|^2 - \nabla_v f^R \cdot \nabla_v \cdot (K_R K_R^T f^R)  \Big] dz
\\
& \; \; = (p-1) \int |f^R|^p | \nabla_v \cdot K_R |^2
\end{align*}
so that, again,
$$
I_2(t) + I_3(t) = \frac{p(p-1)}{2} \int_0^t \int_{\mathbb{R}^{2d}} |f^{R}(s,z)|^p \Big| \nabla_v \cdot K_R[f^{R}(s)] \Big|^2 dz ds \lesssim  \int_0^t \int_{\mathbb{R}^{2d}} |f^{R}(s,z)|^p  dz ds.
$$
Since \myref{fubini} guarantees that $I_4(t)$ defines a (square integrable) martingale, we may take the expectation in \myref{chap3-split} and apply Gronwall's lemma to derive
\begin{align*}
\mathbb{E} \Big[ \int_{\mathbb{R}^{2d}} | {f^{R}}(t,z) |^p dz  \Big] \lesssim \| f_0 \|_{L^p}^p.
\end{align*}
It follows that
$$
\E \Big[ \sup_{\sigma \in [0,t]} \int_{\mathbb{R}^{2d}} | {f^{R}}(\sigma,z) |^p dz \Big] \lesssim \int_0^t 
\E \Big[ \int_{\mathbb{R}^{2d}}
|{f^{R}}(s,z) |^p dz \Big] ds + \E \Big[ \sup_{\sigma \in [0,t]} I_4(\sigma) \Big]
$$
and Burkholder-Davis-Gundy's inequality yields (making use of \myref{simple_identity} again)
\begin{align*}
\E \Big[  \sup_{\sigma \in [0,t]} I_4(\sigma) \Big] & \lesssim 
 \E\Big[ \Big(  \int_0^t \Big| \int_{\mathbb{R}^{2d}} | {f^{R}} |^{p-1} \nabla_v \cdot (K_R[{f^{R}}] {f^{R}}) dz \Big|^2 d\sigma \Big)^{1/2} \Big]
 \\
 & \lesssim 
  \E\Big[ \Big(  \int_0^t \Big| \int_{\mathbb{R}^{2d}} | {f^{R}}(\sigma,z) |^{p} dz \Big|^2 d\sigma \Big)^{1/2} \Big]
\\
& \le \frac{1}{2} \E \Big[  \sup_{\sigma \in [0,t]} \int_{\mathbb{R}^{2d}} | {f^{R}}(\sigma,z) |^p dz \Big] 
+ C \int_0^t \E \Big[  \int_{\mathbb{R}^{2d}} | {f^{R}}(\sigma,z) |^{p} dz \Big]  d\sigma 
\end{align*}
which gives the expected result.
We now extend the estimate to any $f_0 \in L^p(\mathbb{R}^{2d})$ satisfying $\int |z|^\delta f_0(z) dz < \infty$  by considering a sequence of densities $f_0^k~\in~C_c^2(\mathbb{R}^{2d})$ such that, as $k$ goes to infinity, 
\begin{align*}
& f_0^k \to f_0 \text{ a.s and in } L^p(\mathbb{R}^{2d}),
\\
& \sup_{k \ge 1} \int_{z \in \mathbb{R}^{2d}} |z|^\delta f_k(z) dz < \infty.
\end{align*}
It is easy to see that these assumptions imply in particular
$$
f_0^k \to f_0 \text{ in } {\cal P}_r(\mathbb{R}^{2d})
$$
for any $1 < r < \delta$. Denoting by $f^k$ the solution of \myref{spde_reg} with initial data $f_0^k$ constructed in Proposition \ref{mean_field}, we may deduce (as in the proof of Proposition \ref{mean_field}), 
\begin{align*}
\E \Big[ \sup_{t \in [0,T]} W_r^r[ f^k_t, f_t ]  \Big] \le \E \Big[ W_r^r[ f^k, f ]  \Big] \lesssim W_r^r[f_0^k, f_0]
\to 0.
\end{align*}
Up to a subsequence, we may hence assume that 
\begin{align}
\sup_{t \in [0,T]} W_r[f_t^k, f_t] \to 0, \; \Proba - a.s.
\label{W_as}
\end{align}
From the estimates
$$
\E \Big[ \sup_{t \in [0,T]} \| f^k(t) \|_{L^p}^p \Big] \lesssim \| f_0^k \|_{L^p}^p, \; k \ge 1,
$$
we derive that $(f^k)_{k \ge 1}$ is bounded in $L^p_\omega L^\infty_t L^p_z$ and therefore, up to a subsequence
\begin{align}
f^k \rightharpoonup g \text{ weak $*$ in } L^p\Big(\Omega ;  L^\infty([0,T] ;  L^p(\mathbb{R}^{2d}) ) \Big)
\label{this_weak}
\end{align}
where $g$ satisfies the bound
\begin{align}
\E \Big[ \sup_{t \in [0,T]} \| g(t) \|_{L^p}^p \Big] \lesssim \limsup_k \| f_0^k \|_{L^p}^p = \| f_0 \|_{L^p}^p.
\label{bound_g}
\end{align}
Let us consider $\Psi \in C_c([0,T] \times \mathbb{R}^{2d})$, $\xi \in L^\infty(\Omega)$ and introduce $\Phi(\omega,t,z) = \xi(\omega) \Psi(t,z)$. 
From \myref{W_as} we deduce
\begin{align*}
\Big[ \forall t \in [0,T], \; \int_z f^k(t,z) \Psi(t,z) dz \to \int_z f(t,z) \Psi(t,z) dz \Big], \; \Proba - a.s.
\end{align*}
and the bound 
$\Big| \int_z f^k(t,z) \Psi(t,z) dz  \Big| \le \| \Psi \|_{L^\infty_{t,z}}$ guarantees
\begin{align*}
\int_{t=0}^T \int_z f^k(t,z) \Psi(t,z) dz dt  \to \int_{t=0}^T \int_z f(t,z) \Psi(t,z) dz dt, \; \Proba - a.s.
\end{align*}
Finally, the bound 
$
\Big| \int_{t=0}^T \int_z \xi(\omega) f^k(\omega, t,z) \Psi(t,z) dz dt \Big|
\le T \| \xi \|_{L^\infty_\omega} \| \Psi \|_{L^\infty_{t,z}}
$
guarantees
\begin{align*}
\E \Big[ \xi(\omega) \int_{t=0}^T \int_z f^k(\omega, t,z) \Psi(t,z) dz dt  \Big]  \to & \E \Big[ \xi(\omega) \int_{t=0}^T \int_z f(\omega, t,z) \Psi(t,z) dz dt \Big]
\end{align*}
so that, according to \myref{this_weak},
\begin{align*}
\E \Big[ \xi \int_{t=0}^T \int_z g(t,z) \Psi(t,z) dz dt \Big]
= 
\E \Big[ \xi \int_{t=0}^T  \int_z f(t,z) \Psi(t,z) dz dt \Big].
\end{align*}
We easily derive that $f=g$ in $L^p_\omega L^\infty_t L^p_z$ and the bound \myref{bound_g} concludes the proof.
\end{proof}

\begin{proposition} \label{ineq_u}
For all $f : \mathbb{R}^{2d} \to \mathbb{R}^+$ and $p \ge 1$,
\begin{align*}
\int_{z \in \mathbb{R}^{2d}} |u_{R}[f](x)|^p f(z) dz 
\lesssim \int_{z \in \mathbb{R}^{2d}}  |v|^p f(z) dz.
\end{align*}
The constant involved in $\lesssim$ depends on $\phi$ only.

\end{proposition}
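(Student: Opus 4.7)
The plan is to reduce the estimate to a scalar inequality via Jensen and Fubini, then bound the remaining kernel integral by a covering argument. Setting $\rho_\phi(x) := \int \phi(x-y) f(y,w)\,dy\,dw$, the measure
$$d\mu_x(y,w) := \frac{\phi(x-y) f(y,w)}{R^{-1} + \rho_\phi(x)}\,dy\,dw$$
is a sub-probability on $\mathbb{R}^{2d}$. Since $|\theta_R(w)| \le |w|$ by \myref{theta}, Jensen's inequality applied after normalising $\mu_x$ gives
$$|u_R[f](x)|^p = \Big| \int \theta_R(w)\, d\mu_x \Big|^p \le \int |w|^p\, d\mu_x = \frac{\int \phi(x-y) |w|^p f(y,w)\, dy\, dw}{R^{-1} + \rho_\phi(x)}.$$
Multiplying by $f(x,v)$, integrating in $v$ so that $\rho(x) := \int f(x,v)\, dv$ appears, and swapping the order of integration, one reduces the claim to the uniform bound
$$I(y) := \int \frac{\phi(x-y)\, \rho(x)}{R^{-1}+\rho_\phi(x)}\, dx \lesssim 1, \qquad y \in \mathbb{R}^d,$$
with constant depending only on $\phi$.

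To prove this, I would drop $R^{-1}$ in the denominator for an upper bound. Assumption \myref{phi} gives $\phi(x-y) \le \|\phi\|_\infty \mathbf{1}_{B(y,r_2)}(x)$ and the pointwise lower bound $\rho_\phi(x) \ge c \int_{B(x,r_1)} \rho$ with $c := \inf_{B(0,r_1)} \phi > 0$, so that
$$I(y) \le \frac{\|\phi\|_\infty}{c} \int_{B(y, r_2)} \frac{\rho(x)}{\int_{B(x,r_1)} \rho(x')\, dx'}\, dx.$$
Now I cover $B(y, r_2)$ by a finite family of balls $B(y_i, r_1/2)$, $1 \le i \le N$, where $N = N(r_1, r_2, d)$ is a covering number depending on $\phi$ only. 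For any $x \in B(y_i, r_1/2)$, the triangle inequality gives $B(y_i, r_1/2) \subset B(x, r_1)$, hence $\int_{B(x, r_1)} \rho \ge \int_{B(y_i, r_1/2)} \rho =: s_i$. Consequently $\int_{B(y_i, r_1/2)} \rho(x) / \int_{B(x, r_1)} \rho \, dx \le s_i/s_i = 1$ (with the convention $0/0 = 0$, which is consistent since $\rho_\phi(x) = 0$ forces $\rho = 0$ a.e.\ on $B(x, r_1)$), and summing over $i$ bounds the scalar integral by $N$, giving $I(y) \le N \|\phi\|_\infty / c$.

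The main obstacle is this last step: the pointwise ratio $\rho(x)/\rho_\phi(x)$ is \emph{not} bounded in general (take $\rho$ concentrated on a scale much finer than $r_1$), so no pointwise estimate is available. The point is that, after integrating the ratio against $\phi$, the denominator $\rho_\phi$ is compared to $\rho$ only at the averaged scale controlled by the support of $\phi$; the lower bound $\inf_{B(0, r_1)} \phi > 0$ in \myref{phi} makes this averaged comparison quantitative, and the covering converts it into a finite sum of local averages of $\rho$ divided by itself.
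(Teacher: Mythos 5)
Your first step—applying Jensen's inequality to the sub-probability measure $d\mu_x \propto \phi(x-y)f(y,w)\,dy\,dw$ and using $|\theta_R(w)| \le |w|$ to reduce the claim to the scalar inequality
$\int \phi(x-y)\,\rho(x)/\tld\rho(x)\,dx \lesssim 1$ (with $\tld\rho = \phi * \rho$)
uniformly in $y$—is exactly the paper's reduction. The paper then stops there and cites Lemma~5.2 of \cite{KMT1} for the scalar bound \myref{ineq_kmt}, whereas you supply a self-contained proof of that lemma. Your covering argument is correct: bounding $\phi(x-y)$ by $\|\phi\|_\infty \mathbb{1}_{B(y,r_2)}(x)$, lower-bounding $\tld\rho(x) \ge \big(\inf_{B(0,r_1)}\phi\big)\int_{B(x,r_1)}\rho$, covering $B(y,r_2)$ by $N = N(r_1, r_2, d)$ balls of radius $r_1/2$, and observing that $B(y_i, r_1/2) \subset B(x, r_1)$ for every $x \in B(y_i, r_1/2)$ makes each local contribution at most $1$. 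The $0/0$ convention is justified because $\tld\rho(x) = 0$ forces $\rho$ to vanish a.e.\ near $x$, so the set where the ratio is formally ill-posed is $\rho$-null. Notably, your constant $N\|\phi\|_\infty / \inf_{B(0,r_1)}\phi$ reproduces the $\propto \frac{\sup_{B(0,r_2)}\phi}{\inf_{B(0,r_1)}\phi}(r_2/r_1)^d$ dependence the paper records in \myref{C_phi}, which is the key point the paper needs later to make the estimate uniform in the strong-local-alignment limit $\phi = \phi_r$. So the net effect of your version is a fully self-contained proof where the paper delegates the crucial step to a reference; the route is the same, but you have filled in the missing lemma.
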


\begin{proof}
It is clear from the expression of $u_R[f]$ \myref{reg} and Jensen's inequality that
\begin{align*}
\int_{z \in \mathbb{R}^{2d}} |u_{R}[f](x)|^p f(z) dz
& \le \int_{x} \int_{v} \frac{ \int_{y} \int_{w} |w|^p \phi(x-y) f(y,w) dy dw }{ \int_y \int_w \phi(x-y) f(y,w) dy dw} f(x,v) dx dv 
\\
&=  \int_y \int_w  \Big( \int_x \frac{\rho(x)}{\tld \rho(x)} \phi(x-y) dx \Big) |w|^p f(y,w) dx dy dw
\end{align*}
where
\begin{align*}
\rho(x) = \int_{v \in \mathbb{R}^d} f(x,v), \hspace{5mm} \tld \rho(x) = (\phi * \rho)(x) = \int_{y \in \mathbb{R}^d} \int_{w \in \mathbb{R}^d} \phi(x-y) f(y,w) dy dw.
\end{align*}
The desired estimate hence follows from the inequality
\begin{align}
\forall y \in \mathbb{R}^d, \; \; \int_{x \in \mathbb{R}^d} \frac{\rho(x)}{\tld \rho(x)} \phi(x-y) dx \le C(\phi)
\label{ineq_kmt}
\end{align}
where, with assumption \myref{phi} in mind, $C(\phi)$ is some constant proportional to 
\begin{align}
C(\phi) \propto \frac{\sup_{B(0,r_2)} \phi}{\inf_{B(0,r_1)} \phi} (R/r)^d.
\label{C_phi}
\end{align}
The proof of \myref{ineq_kmt} is given in \cite{KMT1}, Lemma 5.2.

\end{proof}

\begin{proposition} \label{chap3-moments}
Let $k \ge 2$, $1 < \delta \le 2$ and $f_0$ be a density satisfying $\int_z (|x|^\delta + |v|^{k}) f_0(z) dz < \infty$. Then,
\begin{align*}
& \E \Big[ \sup_{t \in [0,T]} \int_{\mathbb{R}^{2d}} |v|^{k} {f^{R}}(t,z) dz  \Big] \lesssim  1+
\int_{\mathbb{R}^{2d}} |v|^{k} f_0(z) dz.
\\
& \E \Big[ \sup_{t \in [0,T]} \int_{\mathbb{R}^{2d}} |x|^{\delta} {f^{R}}(t,z) dz  \Big] \lesssim  1+
\int_{\mathbb{R}^{2d}} (|x|^\delta + |v|^{2}) f_0(z) dz.
\end{align*}
The constants involved in $\lesssim$ depends on $k$, $\delta$, $T$ and $\phi$ only.
\end{proposition}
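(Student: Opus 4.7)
The plan is to apply It\^o's formula to the moments $A(t) := \int|v|^k f^R(t,z)\,dz$ and $B(t) := \int|x|^\delta f^R(t,z)\,dz$, bound the resulting drifts using the sublinearity of the coefficients together with Proposition~\ref{ineq_u}, and close by a joint Gronwall argument. Since $|v|^k$ and $|x|^\delta$ are not compactly supported, I would first apply It\^o to truncated test functions $|v|^k \chi_M(z)$ and $|x|^\delta \chi_M(z)$ and pass $M \to \infty$; this is justified by the regularity of $f^R$ from Proposition~\ref{regular} and the $L^p$ estimate of Proposition~\ref{Lp_estimate}. Equivalently, one may work along the characteristic flow $Z^\mu_t(z_0)$, apply It\^o pointwise to $|V^\mu_t(z_0)|^k$, and integrate against $f_0(z_0)\,dz_0$ using the push-forward identity $\mu_t = (Z^\mu_t)^*f_0$ from Proposition~\ref{mean_field}.

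For $A(t)$, the drift produced by $L^{CS}_R$ is $\lesssim A(s)$ via $|L^{CS}_R(z)| \lesssim |v| + A(s)^{1/k}$ and H\"older. The $L^{MT}_R$-drift yields the favourable dissipation $-k A(s)$ plus a residual $k\int v|v|^{k-2} \cdot u_R[f^R] f^R\,dz$ dominated by a multiple of $A(s)$ thanks to Proposition~\ref{ineq_u}. The $S_R$-drift and the second-order term $\tfrac{1}{2} \int K_R^i K_R^j \partial^2_{v_i v_j} |v|^k f^R\,dz$ are bounded using the sublinear control $|K_R(z)|^2 \lesssim 1 + |z|^2 + A(s)^{2/k}$ and Young's inequality, producing a bound of the form $\lesssim 1 + A(s) + B(s)$. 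The martingale coefficient is controlled by Burkholder--Davis--Gundy together with the absorbing inequality $\E[\sup X \cdot Y] \le \tfrac{1}{2}\E \sup X + C\,\E Y$. For $B(t)$, since $|x|^\delta$ has no $v$-dependence, no stochastic term arises and only the transport drift $\delta \int |x|^{\delta-2} x \cdot v f^R\,dz$ contributes; Young's inequality gives $|x|^{\delta-1} |v| \le \tfrac{\delta-1}{\delta}|x|^\delta + \tfrac{1}{\delta}|v|^\delta$, and Jensen (using $\delta \le 2 \le k$ and $\int f^R \le 1$) yields $\int |v|^\delta f^R \le 1 + A(s)$. This leads to $B(t) \le B(0) + C\int_0^t (1 + A(s) + B(s))\,ds$. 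A Gronwall applied to the coupled pair $(\E\sup A, \E\sup B)$ then produces both announced bounds.

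The main obstacle is handling the $|x|$-growth of $F$ inside $K_R$: the second-order drift and $S_R$ generate cross-moments such as $\int |x|^2 |v|^{k-2} f^R\,dz$ that do not split cleanly by H\"older when $\delta < k$. I would absorb them by Young's inequality, trading $x$- and $v$-exponents under the constraint $\delta \le 2 \le k$, invoking the $L^p$ bound of Proposition~\ref{Lp_estimate} where needed to control the residual integrability in $z$. The resulting drift estimate $\lesssim 1 + A + B$ is precisely what permits the coupled Gronwall to close and explains why the $x$-moment $\int|x|^\delta f_0$ of the initial datum must participate---at least implicitly through $B(t)$---in the propagation of the $v$-moment.
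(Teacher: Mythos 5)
Your plan is in the same spirit as the paper's proof (work along the characteristic flow $Z^\mu_t$, apply It\^o to $|V_t(z)|^k$, integrate against $f_0\,dz$, control the alignment term via Proposition~\ref{ineq_u}, close with Gronwall and BDG), and your treatment of the $L^{MT}_R$ term is essentially equivalent to the paper's clean identity $v\cdot(u-v)\le\frac14|u|^2$. The $x$-moment, however, is handled quite differently: the paper does not apply It\^o to $B(t)$ at all, but uses the elementary pathwise bound $|X_t(z)|^\delta=\big|x+\int_0^tV_s(z)\,ds\big|^\delta\lesssim 1+|x|^\delta+\int_0^t|V_s(z)|^2\,ds$ (valid since $\delta\le 2$) and then feeds in the already-established $k=2$ velocity estimate. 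This avoids the transport drift $\delta\int|x|^{\delta-2}x\cdot v\,f\,dz$ and the Young/Jensen step altogether, and is shorter.

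The more substantive difference is your proposal to run a \emph{coupled} Gronwall argument on $(A,B)$. The paper's proof is deliberately \emph{sequential}: it bounds $A$ first, with a bound depending only on $\int|v|^kf_0\,dz$, and then uses $A_2$ to bound $B$. Any genuinely coupled iteration would inevitably contaminate the first estimate with $B(0)=\int|x|^\delta f_0\,dz$, yielding $\E\sup_tA\lesssim 1+\int(|x|^\delta+|v|^k)f_0\,dz$, which is strictly weaker than the statement and would be insufficient for the later use of the lemma. Moreover, the coupled Gronwall as you sketch it does not actually close for $\delta<k$: the cross-terms of the type $\int|V_t|^{k-1}|X_t|\,f_0\,dz$ coming from $\chi_R(x)F(x)$ inside $K_R$ can be split by Young only into $\int|V_t|^{(k-1)\delta/(\delta-1)}f_0+\int|X_t|^\delta f_0$, and the velocity exponent $(k-1)\delta/(\delta-1)\ge k$ (with equality only if $k=\delta$), so the term is not dominated by $A+B$ when $\delta\le 2<k$. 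You are right that the $|x|$-growth of $F$ is a real obstacle---this is a genuine observation, since the sublinearity estimate (\ref{sublinear}) does produce an $|x|$-dependence in $a^1$ and $a^3$, while the paper's displayed bound $|a^1_t|+|a^3_t|\lesssim 1+|V_t|^k+\int|v'|^kf_t$ silently suppresses it, which is only legitimate if $F$ is in fact bounded (or if one accepts $R$-dependence of the constant through the cutoff $\chi_RF$). But the fix is not a coupled Gronwall; that route both weakens the result and fails to absorb the cross-terms. You should either retain the sequential structure as in the paper (and justify the claimed bound on $a^1,a^3$ under a boundedness hypothesis on $F$), or accept a weaker proposition.
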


\begin{proof}
The first estimate should first be established with the stopping time 
$$ \tau_M = \inf \left\{ t \ge 0, \; \; \int_{z \in \mathbb{R}^{2d}} |v|^{k} f_t(z) dz \ge M \right\} \wedge T,$$
which should then be sent to $T$, as $M$ goes to infinity. For the sake of simplicity, we omit this stopping time in the following.
Let us denote $f(t,z) = {f^{R}}(t,z)$ and the associated characteristics $Z_t(z)~=~Z_t^{{\mu^{R}}}(z)$ (with $\mu^R(dz) = f^R(z) dz$), satisfying \myref{chap3-chara}. We have
$$
 \int_{z \in \mathbb{R}^{2d}} |z|^{k} f(t,z) dz = \int_{z \in \mathbb{R}^{2d}} \Big|  Z_t(z) \Big|^{k} f_0(z) dz
$$
and Itô's formula gives, for fixed $z \in \mathbb{R}^{2d}$,
\begin{align*}
d \Big[ |V_t(z) |^{k} \Big] =&  k | V_t(z) |^{k - 2} V_t(z) \cdot \Big(  L^{CS}_R[f_t] + L^{MT}_R[f_t] + S_R[f_t]   \Big)(Z_t(z)) dt 
\\
& + k | V_t(z) |^{k-2} V_t(z) \cdot K_R[f_t](Z_t(z)) d\beta_t
\\
& + k(k/2-1) |V_t|^{k-4} \Big| V_t(z) \cdot K_R[f_t](Z_t(z)) \Big|^2 dt + k |V_t(z) |^{k-2} \Big| K_R[f_t](Z_t(z)) \Big|^2 dt.
\end{align*}
From the uniform sublinearity \myref{sublinear}, we derive
\begin{align}
 |V_t(z) |^{k} = |v|^{k} + \int_0^t \Big( a^1_s(z) + a^2_s(z)  \Big) ds + \int_0^t a^3_s(z) d\beta_s,
 \label{this_sde}
\end{align}
where
\begin{align*}
& | a^1_t(z)| + |a^3_t(z)| \lesssim \Big( 1 + |V_t(z)|^{k} + \int_{z'} |v'|^{k} f_t(z') dz' \Big),
\\
& a^2_t(z) = k |V_t(z)|^{k-2} V_t(z) \cdot \Big( u_{R}[f_t](X_t(z)) - V_t(z) \Big).
\end{align*}
We may then integrate \myref{this_sde} with respect to $f_0(z) dz$ using a stochastic Fubini theorem, which leads to
\begin{align}
 \int_{z \in \mathbb{R}^{2d}} |v|^{k} f(t,z) dz = \int_{z \in \mathbb{R}^{2d}} |v |^{k} f_0(z) dz + 
 \int_0^t \Big( A_s^1 + A_s^2 \Big) ds + \int_0^t A_s^3 d\beta_s,
 \label{this_sde2}
\end{align}
where
\begin{align}
& | A^1_t| + |A^3_t| \lesssim \Big( 1 + \int_{z} |v|^{k} f_t(z) dz \Big), \label{sublin_A}
\\
& A^2_t = k \int_{z \in \mathbb{R}^{2d}} |v|^{k-2} v \cdot \Big( u_{R}[f_t](x) - v \Big) f_t(z) dz. \nonumber
\end{align}
To deal with $A^2_t$, one may write
$$
v \cdot (u-v) = \frac{1}{4} \Big( |v + (u-v) |^2 - |v-(u-v)|^2  \Big) \le \frac{1}{4} |u|^2
$$
so that 
\begin{align*}
\int_z |v|^{k-2} v \cdot (u(x) - v) f(z) dz & \le \frac{1}{4} \int_z |v|^{k-2} |u(x)|^2 f(z) dz
\\ &
 \le  \frac{1}{4} \int_z |v|^{k} f(z) dz +  \frac{1}{4}  \int_z |u(x)|^{k} f(z) dz.
\end{align*}
From Proposition \ref{ineq_u}, we hence deduce
\begin{align}
| A^2_t | \lesssim \int |v|^{k} f_t(z) dz.
\label{sublin_A2}
\end{align}
From SDE \myref{this_sde2} with the sublinear terms \myref{sublin_A} and \myref{sublin_A2}, using Gronwall's lemma and Burkholder-Davis-Gundy's inequality, we classically obtain the first estimate.
Moreover, from
\begin{align*}
| X_t(z) |^{\delta} = \Big| x + \int_0^t V_s(z) ds \Big|^{\delta} \lesssim 1+ |x|^{\delta} + \int_0^t |V_s(z)|^{2} ds
\end{align*}
since $\delta \le 2$, we derive the second estimate, which concludes the proof.
\end{proof}

\end{subsection}

\begin{subsection}{Stochastic averaging lemma} \label{sec:av_lemma}

\begin{proposition} \label{averaging_lemma}
Let us assume that the initial data $f_0$ satisfies, for some $\theta \in (0,1)$,
\begin{align}
\int_{z \in \mathbb{R}^{2d}} |f_0(z)|^{p} dz + \int_{z \in \mathbb{R}^{2d}} (1 + |v|^k) f_0(z) dz <  \infty \; \text{ with }  \; p = 1 + \frac{1}{\theta}, \hspace{4mm} k > \frac{4}{1-\theta}.
\label{compromise1}
\end{align}
For all, $\varphi \in C_c^\infty(\mathbb{R}^d)$, letting $\eta = 1/6$, the averaged quantity
\begin{align*}
\rho_{\varphi}^R(t,x) = \int_{\mathbb{R}^d} \varphi(v) f^R(t,x,v) dx
\end{align*}
lies in $L^2([0,T] ; H^\eta(\mathbb{R}^d))$ almost surely, with the estimate
\begin{align}
\E \Big[ \| {\rho_\varphi^{R}} \|_{L^2_t H^\eta_x}^2   \Big] 
 \lesssim 1.
\label{averaging}
\end{align}
The constant involved in $\lesssim$ in \myref{averaging} depends on $f_0$, $\varphi$, $T$ and $\phi$ only.

\end{proposition}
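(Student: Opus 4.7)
My approach is a Fourier-side averaging argument in the spirit of the classical DiPerna--Lions--Meyer lemma, adapted to accommodate the stochastic source. Taking the Fourier transform of \myref{spde_reg} in $x$ yields, for each $\xi \in \mathbb{R}^d$, a $v$-indexed SDE
\begin{equation*}
d\hat f^R + i(v\cdot\xi)\, \hat f^R\, dt = \hat G_0(t,\xi,v)\, dt + \hat G_1(t,\xi,v)\, d\beta_t,
\end{equation*}
whose sources $\hat G_0, \hat G_1$ are $v$-derivatives (of order up to two) of products of $\hat f^R$ with smooth symbols arising from the coefficients $L_R^{CS}$, $L_R^{MT}$, $S_R$, $K_R$. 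Integrating along the streaming characteristics $e^{-itv\cdot\xi}$ expresses $\hat f^R$ as a sum of three oscillatory integrals in $v$, and testing against $\varphi$ yields a corresponding formula for $\hat\rho_\varphi^R(t,\xi)$.

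The gain of spatial regularity stems from the one-dimensional Plancherel estimate, valid for any bounded $\psi(v)$ compactly supported in $v$:
\begin{equation*}
\int_{\mathbb{R}} \left|\int_{\mathbb{R}^d} \varphi(v)\psi(v)\, e^{-i\tau v\cdot\xi}\, dv\right|^2 d\tau \lesssim |\xi|^{-1} \|\varphi\psi\|_{L^2_v}^2,
\end{equation*}
obtained by reducing to the coordinate along $\xi/|\xi|$. It gives a $|\xi|^{-1/2}$ decay in $L^2_t$. For the deterministic terms in the Duhamel formula this applies directly; for the stochastic term, It\^o isometry first turns the expectation of the squared $L^2_t$-norm into a deterministic double integral, onto which the same oscillation bound applies. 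Multiplying by $|\xi|^{2\eta}$ and integrating in $\xi$ then yields an $L^2_t H^\eta_x$ bound on $\rho_\varphi^R$.

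The exponents $p$, $k$ and $\eta = 1/6$ enter through the $L^2_{t,x,v}$-control of the source terms $G_0, G_1$: by the uniform sublinearity \myref{sublinear} together with Proposition \ref{ineq_u} (essential in order to bound the singular Motsch--Tadmor ratio in an averaged sense rather than pointwise), the drift and diffusion coefficients are controlled by $\lesssim 1+|v|$ in $L^\infty_{t,x}$, so that the task reduces to an $L^2_{t,x,v}$-estimate of $(1+|v|)^2 f^R$. With a $|\xi|$-dependent velocity cutoff $|v| \le M$, the truncated $L^2_{t,x,v}$-norm is controlled by $M^{4 + c(p)d}\,\|f^R\|_{L^\infty_t L^p_{x,v}}^2$ (Proposition \ref{Lp_estimate}), while the tail $\int_{|v|>M} (1+|v|)^2 f^R\,dz$ is bounded by $M^{-(k-2)} \E\int |v|^k f^R\,dz$ (Proposition \ref{chap3-moments}); optimizing $M = M(|\xi|)$ to balance these two losses against the oscillation gain $|\xi|^{-1/2}$ produces precisely the compromise $p = 1+1/\theta$, $k > 4/(1-\theta)$ and the resulting regularity $\eta = 1/6$.

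The main difficulty will lie in the handling of the stochastic source: unlike the deterministic setting, a joint $(t,x)$-Fourier transform is unavailable since the It\^o integral is only $1/2^{-}$-H\"older in time, so one must run the $v$-oscillation analysis while keeping $t$ as a real variable, and defer It\^o isometry to the final step in order to close the $L^2_{\omega,t}$-estimate. A secondary subtlety is that $L_R^{MT}[f^R]$ is not pointwise-controllable but only $L^p_x(d\mu^R_t)$-controllable through Proposition \ref{ineq_u}, which forces working with averaged rather than pointwise bounds on the sources throughout.
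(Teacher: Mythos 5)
Your plan correctly identifies the architecture of the paper's argument: $x$-Fourier transform of \myref{spde_reg}, Duhamel along the free streaming, a one-dimensional trace/Plancherel estimate producing a $|\xi|^{-1/2}$ gain, It\^o isometry for the stochastic integral, and the use of Propositions \ref{Lp_estimate}, \ref{ineq_u}, \ref{chap3-moments} together with the $p=1+1/\theta$, $k>4/(1-\theta)$ compromise to control the source terms $G^i$, $G^{ij}$, $H^i$. But there is a genuine gap in how you propose to extract $\eta=1/6$. After integrating $\varphi(v)$ against, say, $e^{-i(t-s)v\cdot\xi}\,\widehat{\partial^2_{v_iv_j}G^{ij}}$ and integrating by parts in $v$, a factor $|\xi|^2(t-s)^2$ appears (similarly $|\xi|(t-s)$ for the stochastic source $\partial_{v_i}H^i$). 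The trace estimate only gives $|\xi|^{-1/2}$ in $L^2_t$, so without further input the second-order deterministic source alone yields a net $|\xi|^{3/2}$ per square root, i.e.\ a \emph{loss} of regularity, and even the first-order stochastic source gives a net loss. The paper resolves this by rewriting the Fourier SDE as $d\hat f + (\lambda + iv\cdot\xi)\hat f\,dt = \lambda\hat f\,dt + \cdots$ and using the damped kernel $e^{-\lambda(t-s)}$ to convert $(t-s)^k$ into $\lambda^{-k}$; with $\lambda(\xi)=|\xi|^r$ the four terms give the constraints $2\eta-1\le 0$, $2\eta-1+r\le 0$, $2\eta+3-5r\le 0$, $2\eta+1-2r\le 0$, whose intersection is exactly $\eta\le 1/6$. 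Your plan never introduces this damping parameter.

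The $|\xi|$-dependent velocity cutoff $M(|\xi|)$ you propose does not substitute for the $\lambda$-trick: splitting $\int(1+|v|^4)|f|^2\,dz$ between a truncated region (controlled by $\|f\|_{L^p}$) and a tail (controlled by $\int|v|^kf\,dz$), then optimizing over $M$, is equivalent to the Young-inequality bound the paper applies and it is where $p=1+1/\theta$, $k>4/(1-\theta)$ come from — but it produces a \emph{$\xi$-independent} bound on $\|G^\beta\|_{L^2_{t,z}}$ and $\|H^i\|_{L^2_{t,z}}$. It cannot cancel the $|\xi|^k(t-s)^k$ factors coming from the $v$-derivatives, so attributing $\eta=1/6$ to the $M$-optimization is incorrect. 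You need the $\lambda(\xi)$-damping in addition to the source bound; with it, the rest of your outline (trace estimate, It\^o isometry, treatment of $L^{MT}_R$ through Proposition \ref{ineq_u} in an averaged rather than pointwise sense) matches the paper.
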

\begin{proof}
The proof of this result is based on a classical $L^2$ averaging lemma (see e.g \cite{bouchut} for the deterministic case), which we adapt here to the stochastic case, in a similar fashion to \cite{debussche}, Lemma 4.3.
Let us first consider some initial data $f_0 \in C_c^2(\mathbb{R}^{2d})$ so that $f = {f^{R}}$ is a regular solution of \myref{spde_reg} in the sense of Proposition \ref{regular}, which may be written as follows: $\Proba$-a.s, for all $z \in \mathbb{R}^{2d}$,
\begin{align}
d f(t,z) + v \cdot \nabla_x f(t,z) dt = & \Big( \sum_{1 \le i \le d} \partial_{v_i} G^i(t,z) + \sum_{1 \le i,j \le d} \partial^2_{v_i v_j} G^{i j}(t,z) \Big) dt 
\nonumber \\ &
+ \Big( \sum_{1 \le i \le d} \partial_{v_i} H^i(t,z) \Big) d\beta_t
\label{regular_sde}
\end{align} 
where we have introduced the coefficients
\begin{equation}
\left\{
\begin{array}{l}
G^i(t,z) = \Big(L^{CS}_R[f(t)] + L^{MT}_R[f(t)] + S_R[f(t)] \Big)(z)^i f(t,z),
\vspace{1mm}
\\
G^{ij}(t,z) = K_R[f(t)](z)^i  K_R[f(t)](z)^j f(t,z),
\vspace{1mm}
\\
H^i(t,z) = K_R[f(t)]^i f(t,z).
\end{array}
\right.
\label{GH}
\end{equation}
Let us fix $\xi \in \mathbb{R}^d$. For simplicity, let us drop the summation signs in \myref{regular_sde} and integrate it with respect to $e^{-i \xi \cdot x} dx$. This is possible (for every $v \in \mathbb{R}^d$) thanks to the bound \myref{fubini} with $p=1$ established previously. Denoting the $x$-Fourier transform
$$
\hat f(t,\xi,v) = \int_{x \in \mathbb{R}^d} e^{-i \xi \cdot x} f(t,x,v) dx,
$$
we are led to
\begin{align*}
d \hat f(t,\xi,v) + i v \cdot \xi \hat f(t,\xi,v) dt = 
+ \Big( \widehat{\partial_{v_i}G^i} + \widehat{\partial^2_{v_i v_j }G^{ij}} \Big)(t,\xi,v) dt.
+ \widehat{ \partial_{v_i} H^i}(t,\xi,v) d\beta_t
\end{align*}
Therefore, introducing some $\lambda = \lambda(\xi) > 0$, we get
\begin{align*}
d \hat f(t,\xi,v) + (\lambda + i v \cdot \xi) \hat f(t,\xi,v) dt =  \lambda \hat f(t,\xi,v) dt 
& + \Big( \widehat{\partial_{v_i}G^i} + \widehat{\partial^2_{v_i v_j }G^{ij}} \Big)(t,\xi,v) dt
\nonumber \\ &
+ \widehat{ \partial_{v_i} H^i}(t,\xi,v) d\beta_t,
\end{align*}
from which we deduce the expression
\begin{align}
\hat f(t,\xi,v) = & \;  e^{-(\lambda + i v \cdot \xi)t} \hat{f_0}(\xi,v)
+ \lambda \int_0^t  e^{-(\lambda + i v \cdot \xi)(t-s)} \hat f(s,\xi,v) ds
\nonumber \\ &
+ \int_0^t  e^{-(\lambda + i v \cdot \xi)(t-s)}  \Big( \widehat{\partial_{v_i}G^i} + \widehat{\partial^2_{v_i v_j }G^{ij}} \Big)(s,\xi,v)ds
\nonumber \\ &
+  \int_0^t  e^{-(\lambda + i v \cdot \xi)(t-s)}  \widehat{ \partial_{v_i} H^i}(s,\xi,v) d\beta_s.
\label{sde_lambda2}
\end{align}
We now integrate \myref{sde_lambda2} with respect to $\varphi(v) dv$. This is possible since one could show that
\begin{align*}
& \E \int_0^t \int_{v \in \mathbb{R}^d} \Big| \varphi(v) e^{-(\lambda + i v \cdot \xi)(t-s)}  \widehat{ \partial_{v_i} H^i}(s,\xi,v)  \Big|^2 dv ds
\\
& \hspace{5mm} \lesssim \E \int_0^T \int_{v \in \mathbb{R}^d} (1 + |v|^2) \Big|( \int_{x \in \mathbb{R}^d} \Big| \nabla_v \cdot (K_R[f(s)] f(s)) \Big| dx  \Big)^2 dv ds < \infty
\end{align*}
for fixed $R > 0$, by a method similar  to the one employed to establish \myref{fubini} in the proof Proposition \ref{Lp_estimate}. Introducing the $(x,v)$-Fourier transform
$$
({\cal F}f) (t,\xi, \zeta) = \int_{v \in \mathbb{R}^d} \int_{x \in \mathbb{R}^d} e^{-i (\xi \cdot x + \zeta \cdot v)} f(t,x,v) dx dv,
$$
equation \myref{sde_lambda2} leads to
\begin{align*}
\widehat{\rho_\varphi}(t,\xi) = & e^{-\lambda t} {\cal F}(f_0 \varphi)(\xi, \xi t) +
 \lambda \int_0^t e^{-\lambda(t-s)} {\cal F}(f \varphi)(s,\xi, \xi(t-s)) ds
 \\ &
 + \int_0^t e^{-\lambda(t-s)} \Big( {\cal F}((\partial_{v_i} G^i) \varphi) + {\cal F}((\partial^2_{v_i v_j} G^{i j}) \varphi)  \Big)(s, \xi, \xi(t-s)) ds
 \\ &
 + \int_0^t e^{-\lambda(t-s)} {\cal F}((\partial_{v_i} H^i) \varphi)(s, \xi, \xi(t-s)) d\beta_s.
\end{align*}
Note that since 
\begin{align}
& (\partial_{v_i} G^i) \varphi = \partial_{v_i}( G^i \varphi) - G^i \partial_{v_i} \varphi,
\label{identity_diff_1}
\\
& (\partial_{v_i v_j}^2 G^{ij}) \varphi = \partial_{v_i v_j}^2 (G^{ij} \varphi) - \partial_{v_i}(G^{ij} \partial_j \varphi) -  \partial_{v_j}(G^{ij} \partial_i \varphi) + G^{ij} \partial^2_{v_i v_j} \varphi
\label{identity_diff_2}
\end{align}
forgetting the summation signs again, we may as well only consider terms of the form
\begin{align}
\widehat{\rho_\varphi}(t,\xi) = & e^{-\lambda t} {\cal F}(f_0 \Psi)(\xi, \xi t) +
 \lambda \int_0^t e^{-\lambda(t-s)} {\cal F}(f \Psi)(s,\xi, \xi(t-s)) ds
 \nonumber \\ &
 + \int_0^t e^{-\lambda(t-s)} {\cal F}\Big( \partial_v^\alpha(G^\beta \Psi)  \Big)(s, \xi, \xi(t-s)) ds
  \nonumber \\ &
 + \int_0^t e^{-\lambda(t-s)} {\cal F} \Big( \partial_v^\gamma(H^i \Psi) \Big)(s, \xi, \xi(t-s)) d\beta_s
 \label{spde_rho}
\end{align}
where $\Psi \equiv \Psi(v) \in C_c^\infty(\mathbb{R}^d)$ and $\alpha, \beta$ and $\gamma$ are multi-indexes of order $0 \le |\alpha| \le 2$, $1 \le |\beta| \le 2$ and $0 \le |\gamma| \le 1$. From \myref{spde_rho} Itô's isometry gives
\begin{align}
\E \int_0^T | \widehat{\rho_\varphi}(t,\xi) |^2 dt \lesssim J_1(\xi) + J_2(\xi) + J_3(\xi) + J_4(\xi) 
\label{rho_isometry}
\end{align}
where
\begin{align*}
& J_1(\xi) = \int_0^T e^{-2\lambda t} | {\cal F}(f_0 \Psi)(\xi, \xi t) |^2 dt, 
\\
& J_2(\xi) = \lambda^2 \E \int_0^T \Big| \int_0^t e^{-\lambda(t-s)} {\cal F}(f \Psi)(s,\xi, \xi(t-s)) ds  \Big|^2 dt,
 \\
& J_3(\xi) = \E \int_0^T \Big|  \int_0^t e^{-\lambda(t-s)} {\cal F}\Big( \partial_v^\alpha(G^\beta \Psi)  \Big)(s, \xi, \xi(t-s)) ds  \Big|^2 dt,
\\
& J_4(\xi) = \E \int_0^T \int_0^t e^{-2 \lambda(t-s)}\Big| {\cal F} \Big( \partial_v^\gamma(H^i \Psi) \Big)(s, \xi, \xi(t-s)) \Big|^2 ds \, dt.
\end{align*}
Using a trace lemma (see e.g \cite{triebel}, Theorem 2.7.2)  we get, for some $s > (d-1)/2$,
\begin{align*}
J_1(\xi) & \le  \int_{\mathbb{R}}  \Big| {\cal F}(f_0 \Psi)(\xi,\xi t) \Big|^2 dt
= |\xi|^{-1} \int_{\mathbb{R}}  \Big| {\cal F}(f_0 \Psi)(\xi,\frac{\xi}{|\xi|} t) \Big|^2 dt
\\ &
\lesssim |\xi|^{-1} \int_{\mathbb{R}^d} \Big| (Id - \Delta_w)^s {\cal F}(f_0 \Psi)(\xi, w)  \Big|^2 dw.
\end{align*}
It then follows by Plancherel's identity, since $\Psi$ is compactly supported, that
\begin{align}
J_1(\xi) \lesssim  |\xi|^{-1} \int_{\mathbb{R}^d} (1 + |v|^2)^s | \Psi(v)|^2 | \hat f_0(\xi,v) |^2 dv
\lesssim  |\xi|^{-1}  \int_{\mathbb{R}^d}  | \hat f_0(\xi,v) |^2 dv.
\label{bound_J1}
\end{align}
For the second term $J_2(\xi)$, Jensen's inequality gives
\begin{align*}
J_2(\xi) & \le \lambda \E \int_0^T \int_0^t e^{-\lambda(t-s)} \Big| {\cal F}(f \Psi)(s,\xi,\xi(t-s)) \Big|^2 ds dt
\\
& \le  \lambda \E \int_0^T \int_s^T \Big| {\cal F}(f \Psi)(s,\xi,\xi(t-s)) \Big|^2 dt ds 
\le  \lambda \E \int_0^T \Big( \int_{\mathbb{R}} \Big| {\cal F}(f \Psi)(s,\xi,\xi t ) \Big|^2 dt  \Big) ds 
\end{align*}
and the same manipulation leads to
\begin{align}
J_2(\xi) \lesssim \lambda  |\xi|^{-1} \E \int_0^T \int_{\mathbb{R}^d} |\hat f(s, \xi, v) |^2 dv ds.
\label{bound_J2}
\end{align}
Similarly, for the third term $J_3(\xi)$, Jensen's inequality gives
\begin{align*}
J_3(\xi) & \le \lambda^{-1} \E \int_0^T \int_0^t e^{-\lambda(t-s)} \Big| {\cal F}(\partial^\alpha_v(G^\beta \Psi))(s,\xi,\xi(t-s)) \Big|^2 ds dt
\\
& \le \lambda^{-1}  \E \int_0^T \int_0^t e^{-\lambda(t-s)} |\xi|^4 (t-s)^4 \Big| {\cal F}(G^\beta \Psi)(s,\xi,\xi(t-s)) \Big|^2 ds dt
\\
& \lesssim \lambda^{-5} |\xi|^4  \E \int_0^T \Big( \int_{\mathbb{R}}  \Big| {\cal F}(G^\beta \Psi)(s,\xi,\xi t) \Big|^2 dt \Big) ds
\end{align*}
where we have used $e^{-\lambda(t-s)} (t-s)^4 \lesssim \lambda^{-4}$. 
The same manipulation hence leads to
\begin{align}
J_3(\xi) \lesssim \lambda^{-5} |\xi|^3 \E \int_0^T \int_{\mathbb{R}^d} 
| \widehat{G^\beta}(s,\xi,v) |^2 dv ds.
\label{bound_J3}
\end{align}
Finally, for the fourth term $J_4(\xi)$, we get
\begin{align*}
J_4(\xi) & \le \E \int_0^T \int_0^t e^{-2 \lambda(t-s)} |\xi|^2 (t-s)^2 \Big| {\cal F}(H^i \Psi)(s,\xi,\xi(t-s)) \Big|^2 ds dt
\\ & \lesssim \lambda^{-2} |\xi|^2 \E \int_0^T \Big( \int_{\mathbb{R}} \Big| {\cal F}(H^i \Psi)(s,\xi,\xi(t-s)) \Big|^2 dt \Big) ds
\end{align*}
and the same manipulation leads to
\begin{align}
J_4(\xi) \lesssim \lambda^{-2} |\xi| \E \int_0^T \int_{\mathbb{R}^d} | \widehat{H^i}(s,\xi,v) |^2 dv ds .
\label{bound_J4}
\end{align}
Now fixing some $\eta > 0$, let us consider
\begin{align*}
\E \int_0^T \int_{\mathbb{R}^d} (1 + |\xi|^{2\eta}) | \widehat{\rho_\varphi}(t,\xi) |^2 d\xi dt
= I_1 + I_2
\end{align*}
where
\begin{align}
& I_1 = \E \int_0^T \int_{\mathbb{R}^d}  | \widehat{\rho_\varphi}(t,\xi) |^2 d\xi dt +  \E \int_0^T \int_{|\xi | \le 1} |\xi|^{2\eta} | \widehat{\rho_\varphi}(t,\xi) |^2 d\xi dt,
\label{I1}
\\
& I_2 = \E \int_0^T \int_{|\xi | > 1} |\xi|^{2\eta} | \widehat{\rho_\varphi}(t,\xi) |^2 d\xi dt.
\label{I2}
\end{align}
On one hand, making use of Plancherel's identity, and since $\varphi$ is compactly supported,
\begin{align*}
I_1  \le 2
 \E \int_0^T \int_{\mathbb{R}^d} | \widehat{\rho_\varphi}(t,\xi) |^2 d\xi dt
 & \lesssim  \E \int_0^T \int_{\mathbb{R}^d} |\rho_\varphi(t,x) |^2 dx dt
 \\ &
  \lesssim \E \int_0^T \int_{\mathbb{R}^d} \int_{\mathbb{R}^d} |f(t,x,v)|^2 dv dx dt \lesssim 1
\end{align*}
thanks to Proposition \ref{Lp_estimate}, and the initial bound \myref{compromise1}.
On the other hand, from the bounds \myref{bound_J1}, \myref{bound_J2}, \myref{bound_J3} and \myref{bound_J4} we deduce (using Plancherel's identity once again)
\begin{align}
I_2 \lesssim & \sup_{|\xi > 1} \Big[  |\xi|^{2 \eta - 1} + \lambda(\xi) |\xi|^{2 \eta - 1} + \lambda(\xi)^{-5} |\xi|^{2 \eta + 3} + \lambda(\xi)^{-2} |\xi|^{2 \eta + 1}  \Big]
\nonumber
\\
& \times \Big[
\int_{z \in \mathbb{R}^{2d}}  |f_0|^2 dz
+ \E \int_0^T \int_{z \in \mathbb{R}^{2d}}  \Big(|f|^2 + |G^\beta|^2 + |H^i|^2 \Big) dz dt
\Big].
\label{second_part}
\end{align}
Expressions \myref{GH} and the sublinearity estimate \myref{sublinear} immediately give
\begin{align}
& |G^i(t,z)|^2 \lesssim \Big( 1 + |v|^2 + \int |v'|^2 f(t,z') dz' + \Big| u_{R}[f(t)](x) \Big|^2 \Big) |f(t,z)|^2,
\label{ineq_G1}
\\
& |G^{ij}(t,z)|^2 \lesssim \Big( 1 + |v|^4 + \int |v'|^4 f(t,z') dz' \Big) |f(t,z)|^2,
\label{ineq_G2}
\\
& |H^i(t,z)|^2 \lesssim  \Big( 1 + |v|^2 + \int |v'|^2 f(t,z') dz' \Big) |f(t,z)|^2,
\label{ineq_G3}
\end{align}
from which we easily deduce (using the same method as in the proof of Proposition \ref{chap3-moments} for the term involving $u_{R}[f(t)]$), for $\theta \in (0,1)$,
\begin{align*}
& \int_{z \in \mathbb{R}^{2d}} |G^i|^2 dz \lesssim \int_{z \in \mathbb{R}^{2d}} (1 + |v|^{\frac{2}{1-\theta}}) f(t) dz + \int_{z \in \mathbb{R}^{2d}}  |f(t)|^{1 + \frac{1}{\theta}} dz,
\\
& \int_{z \in \mathbb{R}^{2d}} |G^{ij}|^2 dz \lesssim \int_{z \in \mathbb{R}^{2d}} (1 + |v|^{\frac{4}{1-\theta}}) f(t) dz + \int_{z \in \mathbb{R}^{2d}}  |f(t)|^{1 + \frac{1}{\theta}} dz,
\\
& \int_{z \in \mathbb{R}^{2d}} |H^i|^2 dz \lesssim \int_{z \in \mathbb{R}^{2d}} (1 + |v|^{\frac{2}{1-\theta}}) f(t) dz + \int_{z \in \mathbb{R}^{2d}}  |f(t)|^{1 + \frac{1}{\theta}} dz,
\end{align*}
so that, thanks again to Proposition \ref{Lp_estimate}, Proposition \ref{chap3-moments} and the initial bound \myref{compromise1}, \myref{second_part} yields
\begin{align*}
I_2 \lesssim \sup_{|\xi > 1} \Big[  |\xi|^{2 \eta - 1} + \lambda(\xi) |\xi|^{2 \eta - 1} + \lambda(\xi)^{-5} |\xi|^{2 \eta + 3} + \lambda(\xi)^{-2} |\xi|^{2 \eta + 1}  \Big].
\end{align*}
Considering $\lambda(\xi) = |\xi|^r$, this supremum is bounded under the requirements
\begin{align*}
2 \eta - 1 \le 0, \hspace{5mm} 2 \eta - 1 + r \le 0,  \hspace{5mm} 2 \eta + 3 - 5 r \le 0,  \hspace{5mm} 2 \eta + 1 - 2 r \le 0,
\end{align*}
which can be met as soon as $\eta \le 1/6$. For such $\eta$, we have shown that
\begin{align*}
\E \Big[ \| \rho_\varphi \|_{L^2_t H^\eta_x}^2  \Big] \lesssim \E \int_0^T \int_{\mathbb{R}^d} (1 +| \xi |^{2\eta})  | \widehat{\rho_\varphi}(t,\xi) |^2 d\xi dt \lesssim 1.
\end{align*}
which concludes the proof in the case of a regular initial data $f_0 \in C_c^2(\mathbb{R}^{2d})$. We may extend this last inequality to general initial data similarly to the proof of Proposition~\ref{Lp_estimate}.
\end{proof}

\end{subsection}

\begin{subsection}{Tightness}

Given an increasing weight function, say
\begin{align*}
W(x,v) = 1 + |x| + |v|,
\end{align*}
let us introduce the weighted Sobolev space
\begin{align}
H^2_{W}(\mathbb{R}^{2d}) = \left\{ \Psi \in D'(\mathbb{R}^{2d}), \; \| \Psi \|_{H^2_{W}}^2 := \max_{|\beta| \le 2} \int |\partial_z^\beta \Psi |^2(z) W(z) dz < \infty \right\}
\label{weight1}
\end{align}
and the dual space
\begin{align}
H^{-2}_{W^{-1}}(\mathbb{R}^{2d}) = \Big(  H^2_{W^{-1}}(\mathbb{R}^{2d}) \Big)' \text{ with }
\| h \|_{H^{-2}_{W^{-1}}} = \sup \left\{ \langle h, \Psi \rangle, \; \| \Psi \|_{H^2_{W}} = 1 \right\}.
\label{weight2}
\end{align}
We may also define the intermediate Sobolev spaces $H^\sigma_W(\mathbb{R}^{2d})$,  $H^{-\sigma}_W(\mathbb{R}^{2d})$ for non-integer $0 < \sigma < 2$.
Note that $\| \Psi \|_{H^2} \le \| \Psi \|_{H^2_W}$ so that $\| h \|_{H^{-2}_{W^{-1}}} \le \| h \|_{H^{-2}}$.

\begin{proposition} \label{tight1}
Let us assume that the initial data $f_0$ satisfies, for some $\delta > 1$ and $\theta \in (0,1)$,
\begin{align}
\int_{z \in \mathbb{R}^{2d}} |f_0(z)|^{p} dz + \int_{z \in \mathbb{R}^{2d}} ( |x|^\delta + |v|^k) f_0(z) dz <  \infty \; \text{ with }  \; p = 1 + \frac{1}{\theta}, \hspace{4mm} k > \frac{4}{1-\theta}.
\label{compromise2}
\end{align}
Then for all $\sigma > 0$, the family of random variables $(f^R)_{R > 0}$ is tight in 
$C([0,T] ; H^{-\sigma}_{W^{-1}}(\mathbb{R}^{2d}))$ .
\end{proposition}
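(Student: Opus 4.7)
The plan is a classical tightness argument combining uniform-in-$R$ moment bounds, which place the values $f^R(t)$ in a compact subset of $H^{-\sigma}_{W^{-1}}(\mathbb{R}^{2d})$ for every $t$, with a uniform Hölder-in-time estimate in a weaker weighted Sobolev norm derived from the Itô form of the SPDE. Interpolating the two would yield equicontinuity of the sample paths in $H^{-\sigma}_{W^{-1}}$, and Kolmogorov's continuity criterion together with Prokhorov's theorem then deliver the tightness in $C([0,T]; H^{-\sigma}_{W^{-1}})$.

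For the first ingredient, I would combine Propositions \ref{Lp_estimate} and \ref{chap3-moments}, which apply under \myref{compromise2}, to obtain
$$\E \Big[ \sup_{t \in [0,T]} \Big( \|f^R(t)\|_{L^p}^p + \int_{\mathbb{R}^{2d}} (|x|^\delta + |v|^k) f^R(t,z) dz \Big) \Big] \lesssim 1$$
uniformly in $R > 0$. For any $\varepsilon > 0$, this confines $\{f^R(t,\cdot) : t \in [0,T]\}$, with probability at least $1 - \varepsilon$, to a set $B_{C_\varepsilon} \subset L^p \cap L^1_W$ bounded in $L^p$-norm and in $W$-moment. A weighted Rellich-type argument shows that $B_{C_\varepsilon}$ is relatively compact in $H^{-\sigma}_{W^{-1}}$ for every $\sigma > 0$: tightness as a set of measures follows from the $W$-moment bound, the $L^p$-control rules out concentration, and duality against the compact embedding of $H^\sigma_W$ into an appropriate space of continuous functions with weighted decay yields the compactness in $H^{-\sigma}_{W^{-1}}$.

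For the second ingredient, I would test the Itô form \myref{chap3-strong} against $\Psi \in H^m_W$, choosing $m$ large enough that $H^m_W$ controls the relevant weighted $L^\infty$-norms, to get
$$\langle f^R(t) - f^R(s), \Psi\rangle = \int_s^t \langle {\cal L}_R[f^R(\sigma)] \Psi, f^R(\sigma)\rangle d\sigma + \int_s^t \langle K_R[f^R(\sigma)] \cdot \nabla_v \Psi, f^R(\sigma)\rangle d\beta_\sigma.$$
The uniform sublinearity \myref{sublinear} of the coefficients $L^{CS}_R, S_R, K_R$, together with Proposition \ref{ineq_u} controlling the Motsch--Tadmor contribution and the moment bounds of Proposition \ref{chap3-moments}, shows that each bracket is bounded, uniformly in $R$, by $\|\Psi\|_{H^m_W}$ times a quantity in $L^{2q}(\Omega)$. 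Taking the supremum over $\|\Psi\|_{H^m_W} \le 1$ and invoking the Burkholder--Davis--Gundy inequality for the stochastic integral would yield
$$\E \|f^R(t) - f^R(s)\|_{H^{-m}_{W^{-1}}}^{2q} \lesssim |t-s|^q, \hspace{5mm} q \ge 1,$$
uniformly in $R$.

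Interpolating the a.s.-boundedness of $f^R$ in $B_{C_\varepsilon}$ (compactly embedded in $H^{-\sigma_0}_{W^{-1}}$ for any $\sigma_0 > 0$) with this Hölder-in-time estimate in $H^{-m}_{W^{-1}}$ would then give, for any $\sigma > 0$, a fractional Hölder bound directly in $H^{-\sigma}_{W^{-1}}$. Kolmogorov's continuity criterion produces continuous modifications with uniformly bounded moments, and combining this equicontinuity with the pointwise relative compactness delivers tightness in $C([0,T]; H^{-\sigma}_{W^{-1}})$ via Arzelà--Ascoli and Prokhorov. The main technical difficulties I anticipate are the weighted Rellich compactness of $B_{C_\varepsilon}$ and the associated interpolation inequality between weighted Sobolev spaces, together with the uniform-in-$R$ control of the $L^{MT}_R$ contribution when testing against $\Psi$, which relies essentially on Proposition \ref{ineq_u} since the Lipschitz constant appearing in \myref{glob_lip} blows up as $R \to \infty$.
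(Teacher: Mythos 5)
Your proposal follows essentially the same strategy as the paper: a uniform-in-$R$ $L^2$-type bound giving pointwise relative compactness of $f^R(t)$ in $H^{-\sigma}_{W^{-1}}$, a uniform Hölder-in-time estimate in a weaker negative Sobolev norm, interpolation to lift the Hölder control to $H^{-\sigma}_{W^{-1}}$, and then Arzelà--Ascoli plus Markov to conclude tightness. The key lemma both arguments hinge on is the time-increment bound
$$\E \Big[ \| f^R(t) - f^R(s) \|_{H^{-2}}^{2q} \Big] \lesssim |t-s|^q,$$
which in the paper is Lemma \ref{kolmo_f}.

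Two places where the paper's execution is cleaner and worth comparing. First, for the pointwise compactness you invoke a weighted Rellich argument on the set $B_{C_\varepsilon}$ bounded in $L^p$ and in $W$-moment. The paper does not need the moments at this stage: it observes that $H^\sigma_W \hookrightarrow L^2$ is compact (the linear weight already forces decay), hence by duality $L^2 \hookrightarrow H^{-\sigma}_{W^{-1}}$ is compact, so a uniform $L^\infty_t L^2_z$ bound on $f^R$ (which follows from Proposition \ref{Lp_estimate} with $p=2$, available since $f_0\in L^1\cap L^p$ with $p>2$) suffices. The moments of Proposition \ref{chap3-moments} are used only inside the increment lemma, not for the compactness set.

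Second, the way you would prove the Hölder estimate --- test the Itô form against $\Psi\in H^m_W$, take the supremum over $\|\Psi\|_{H^m_W}\le 1$, then apply Burkholder--Davis--Gundy --- as written contains a step that doesn't commute: BDG controls the expectation of a fixed martingale, not a supremum over an uncountable family of stochastic integrals. The fix is routine (expand $\|f(t)-f(s)\|_{H^{-m}_{W^{-1}}}^2$ on an orthonormal basis and apply Itô to the sum), and this is essentially what the paper does, just in a Fourier basis: it applies the $(x,v)$-Fourier transform to \myref{regular_sde}, uses Itô's formula on $|\mathcal{F}f(t)-\mathcal{F}f(s)|^2$, and integrates against $(1+|\xi|^4+|\zeta|^4)^{-1}d\xi d\zeta$. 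Working in the unweighted $H^{-2}$ is harmless because $\|\cdot\|_{H^{-2}_{W^{-1}}}\le\|\cdot\|_{H^{-2}}$. Finally, Kolmogorov's continuity criterion requires $q>1$ strictly, not $q\ge 1$ as you write; producing some $q>1$ is precisely where the moment hypothesis $k>4/(1-\theta)$ enters, via the estimate on $\int(1+|v|^4)|f^R|^2\,dz$ raised to the power $q$ --- the paper verifies at the end of Lemma \ref{kolmo_f} that the exponent bookkeeping closes for $q$ slightly above $1$. You should make the same verification explicit; it is where \myref{compromise2} is actually consumed.
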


\begin{proof}
Without loss of generality, let $\sigma \in (0, 2)$. For some $\alpha \in (0,1/2)$ and $M > 0$, let us introduce the set
$$
K_M = \left\{ f \in C([0,T] ; H^{-\sigma}_{W^{-1}}(\mathbb{R}^{2d})) \; \Big| \; \| f \|_{L^\infty_t L^2_z} \le M,  \; \| f \|_{C^\alpha_t H_{W^{-1}}^{-2}} \le M \right\},
$$
where $\| \cdot \|_{C^\alpha_t H^{-2}_W}$ denotes the $\alpha$-Hölder semi-norm
\begin{align*}
\| f \|_{C^\alpha_t H^{-2}_{W^{-1}}} := \sup_{t \neq s} \frac{ \| ft) - f(s) \|_{H^{-2}_{W^{-1}}}} {|t-s|^\alpha}
\le \| f \|_{C^\alpha_t H^{-2}}.
\end{align*}
Since the embedding
$H^\sigma_{W}(\mathbb{R}^{2d})  \subset L^2(\mathbb{R}^{2d})$ is compact,  the dual embedding $L^2(\mathbb{R}^{2d})  \subset H^{-\sigma}_{W^{-1}}(\mathbb{R}^{2d})$ is compact. Additionally, for $f \in K_M$, an interpolation inequality (in weighted Sobolev spaces) yields, for some $\tau \in (0,1)$,
 $$
 \| f \|_{C^\alpha_t H^{-\sigma}_{W^{-1}}} \lesssim 
\| f \|_{L^\infty_t L^2_{W^{-1}}}^\tau  \| f \|_{C^\alpha_t H^{-2}_{W^{-1}}}^{1-\tau}
\lesssim M^2.
$$
Consequently,  Arzelà-Ascoli's theorem guarantees that $K_M$ is a relatively compact subset of the separable, complete space
 $C([0,T] ; H^{-\sigma}_{W^{-1}}(\mathbb{R}^{2d}))$.
Markov's inequality gives
$$
\Proba \Big[ f^R \notin K_M \Big] \le M^{-2} \E \Big[ \sup_{t \in [0,T]} \| f^R(t) \|^2_{L^2}  \Big] + M^{-\gamma} \E \Big[  \| f^R \|^\gamma_{C^\alpha_t H^{-2}} \Big] \Big).
$$
for $\gamma > 0$.
The first term is bounded uniformly in $R$ thanks to Proposition \ref{Lp_estimate}. The bound on the second term results directly from Kolmogorov's continuity theorem and the following lemma.

\begin{lemma} \label{kolmo_f}
For some $q > 1$, for all $t,s \in [0,T]$,
\begin{align*}
\E \Big[ \| f^R(t) - f^R(s) \|_{H^{-2}}^{2q}  \Big] \lesssim |t-s|^{q}.
\end{align*}
The constant involved in $\lesssim$ depends on $q$, $f_0$ and $T$ only.
\end{lemma}
We now prove this lemma: let $0 \le s \le t \le T$. Let us simply denote $f = f^R$ and note that
\begin{align*}
\E \Big[ \| f(t) - f(s) \|_{H^{-2}}^{2q}  \Big] =
 \E \Big[ \Big|  \int_{\mathbb{R}^{d}} \int_{\mathbb{R}^d} (1 + |\xi|^4 + |\zeta|^4)^{-1} \Big| {\cal F}f(t,\xi,\zeta) - {\cal F}f(s,\xi,\zeta)  \Big|^2 d\xi d\zeta \Big|^q \Big].
\end{align*}
As in the proof of Proposition \myref{averaging_lemma}, we may apply the 
$(x,v)$-Fourier transform ${\cal F} f(t,\xi,\zeta)$ to equation \myref{regular_sde} to get (forgetting the summation signs)
\begin{align*}
{\cal F}f(t) - {\cal F}f(s) =  \int_s^t \Big(  -i \xi \cdot {\cal F}(v f) + i \zeta^i {\cal F}(G^i) 
-\zeta^i \zeta^j {\cal F}(G^{ij}) \Big) d\sigma + i \int_s^t \zeta^i {\cal F}(H^i) d\beta_\sigma.
\end{align*}
Itô's formula results in
\begin{align*}
 \Big| {\cal F}f(t) - {\cal F}f(s)  \Big|^2  = & 2 \int_s^t   \overline{ {\cal F} f (\sigma) - {\cal F} t(s) }  \Big(  -i \xi \cdot {\cal F}(v f) + i \zeta^i {\cal F}(G^i) 
-\zeta^i \zeta^j {\cal F}(G^{ij}) \Big) d\sigma
\\
& + \int_s^t | \zeta^i |^2 | {\cal F}(H^i) |^2 d\sigma 
+ 2 i \int_s^t \overline{ {\cal F} f (\sigma) - {\cal F} t(s) } \zeta^i {\cal F}(H^i) d \beta_\sigma
\end{align*}
so that, integrating against $(1 + |\xi|^4 + |\zeta|^4)^{-1} d\xi d\zeta$, we get
\begin{align}
\| f(t) - f(s) \|^2_{H^{-2}} \lesssim & \int_s^t \| f(\sigma) - f(s) \|^2_{H^{-2}} d \sigma + D_t + M_t
\label{D_M}
\end{align}
where
\begin{align*}
& D_t = 
\int_s^t \int_{\mathbb{R}^{2d}} \Big( |{\cal F}(vf)|^2 + |{\cal F}(G^i)|^2 + |{\cal F}(G^{ij})|^2 + |{\cal F}(H^i)|^2 \Big) d\xi d\zeta d\sigma,
\\
& M_t = 2i \int_s^t \Big( \int_{\mathbb{R}^{2d}} (1 + |\xi|^4 + |\zeta|^4)^{-1} \overline{ {\cal F} f (\sigma) - {\cal F} t(s) } \zeta^i {\cal F}(H^i) d\xi d\zeta \Big) d\beta_\sigma.
\end{align*}
From \myref{D_M} we derive
\begin{align}
\E \| f(t) - f(s) \|^{2q}_{H^{-2}} \lesssim  \int_s^t \E \| f(\sigma) - f(s) \|^{2q}_{H^{-2}} d \sigma + \E \Big[ | D_t |^{q}  \Big] + \E \Big[ | M_t |^{q} \Big].
\label{D_M2}
\end{align}
Plancherel's identity gives
\begin{align*}
|D_t|^{q} & = \Big| \int_s^t \Big( \| v f \|_{L^2}^2 + \| G^i \|^2_{L^2} + \| G^{ij} \|^2_{L^2} + \| H^i \|^2_{L^2} \Big) d\sigma \Big|^{q} 
\\
& \lesssim |t-s|^{q-1} 
\int_s^t  \Big( \| v f \|_{L^2}^{2q} + \| G^i \|^{2q}_{L^2} + \| G^{ij} \|^{2q}_{L^2} + \| H^i \|^{2q}_{L^2} \Big) d\sigma
\end{align*}
so that
\begin{align*}
\E \Big[ |D_t|^q \Big] \lesssim |t-s|^q \E \Big[ \sup_{t \in [0,T]} 
\Big( 
 \| v f(t) \|_{L^2}^{2q} + \| G^i(t) \|^{2q}_{L^2} + \| G^{ij}(t) \|^{2q}_{L^2} + \| H^i(t) \|^{2q}_{L^2}
  \Big)   \Big]
\end{align*}
Considering \myref{ineq_G1}, \myref{ineq_G2}, \myref{ineq_G3}, we see that we essentially need a bound (uniform in $R$) on
\begin{align}
\E \Big[ \sup_{t \in [0,T]} \Big| \int_{\mathbb{R}^{2d}} (1 + |v|^4) |f(t,z)|^2 dz \Big|^q \Big].
\label{bound_needed}
\end{align}
This is possible since, for any $m > 2d$,
\begin{align*}
\Big| \int_{\mathbb{R}^{2d}} (1 + |v|^4) |f(t,z)|^2 dz \Big|^q & \lesssim 
\int_{\mathbb{R}^{2d}} (1 + |v|^{4q}) (1 + |z|)^{m(q-1)} |f|^{2q} dz
\\ &
\lesssim \int_{\mathbb{R}^{2d}} (1 + |v|^{4q+m(q-1)} + |x|^{m(q-1)} ) |f|^{2q} dz
\end{align*}
so that, for $\tau \in (0,1)$,
\begin{align*}
\Big| \int_{\mathbb{R}^{2d}} (1 + |v|^4) |f(t,z)|^2 dz \Big|^q & \lesssim 
\int_{\mathbb{R}^{2d}} (1 + |x|^{\frac{m(q-1)}{1-\tau}} + |v|^{\frac{4q+m(q-1)}{1-\tau}}  ) f dz
+ \int_{\mathbb{R}^{2d}} |f|^{1 + \frac{2q-1}{\tau}} dz
\\
& \lesssim \int_{\mathbb{R}^{2d}} (1 + |x|^\delta + |v|^k) f dz + \int_{\mathbb{R}^{2d}} |f|^p dz
\end{align*}
whenever, recalling \myref{compromise2}, for some $\gamma > 0$,
\begin{align*}
\frac{2q-1}{\tau} = \frac{1}{\theta}, \hspace{10mm} \frac{m(q-1)}{1-\tau} \le \delta, \hspace{10mm}
\frac{4q + m(q-1)}{1- \tau} \le k := \frac{4 + \gamma}{1-\theta}.
\end{align*}
These requirements can be met for some $q=q(\gamma) > 1$ close enough to $1$ 
(and $\tau$ close to $\theta$). As for the martingale term, Burkholder-Davis-Gundy's inequality gives
\begin{align}
\E \Big[ |M_t|^q \Big] & \lesssim \E \Big[ \Big| \int_s^t \Big( \int_{\mathbb{R}^{2d}} (1 + |\xi|^4 + |\zeta|^4)^{-1} | {\cal F} f (\sigma) - {\cal F} t(s) |  | \zeta |  | {\cal F}(H^i) | d\xi d\zeta \Big)^2 d\sigma \Big|^{q/2}  \Big]
\nonumber \\
& \lesssim \E \Big[ \Big| \int_s^t  \| f(\sigma) - f(s) \|_{H^{-2}}^2 \| H^i \|_{L^2}^2 d\sigma \Big|^{q/2} \Big]
\nonumber \\
&
\lesssim \E \Big[ \Big| \sup_{t \in [0,T]} \| H^i(t) \|^2_{L^2} \Big|^{q/2} \Big|   \int_s^t  \| f(\sigma) - f(s) \|_{H^{-2}}^2 d\sigma \Big|^{q/2} \Big]
\nonumber  \\
&
\lesssim \E \Big[ \sup_{t \in [0,T]} \| H^i(t) \|^{2q}_{L^2}  \Big] +   \int_s^t  \E \| f(\sigma) - f(s) \|_{H^{-2}}^{2q} d\sigma.
\label{H_term}
\end{align}
Considering \myref{ineq_G3}, the first term in \myref{H_term} is again controlled by the bound on \myref{bound_needed}. We may then come back to \myref{D_M2} and use Grönwall's lemma to conclude.

\end{proof}

\begin{proposition} \label{tight2}
Let us assume that the initial data $f_0$ satisfies \myref{compromise2}. 
\\
Then, for all $\varphi \in C_c^\infty(\mathbb{R}^d)$, the family of random variables $(\rho^{R}_\varphi)_{R > 0}$ is tight in 
$L^2([0,T] ; L^2(\mathbb{R}^d))$.

\end{proposition}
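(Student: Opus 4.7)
The plan is to show that $(\rho_\varphi^R)_{R>0}$ lies, uniformly in $R$ with high probability, in a fixed relatively compact subset of $L^2([0,T];L^2(\mathbb{R}^d))$. Following the pattern of Proposition \ref{tight1}, this will rest on Markov's inequality combined with three uniform-in-$R$ expectation estimates on $\rho_\varphi^R$: spatial fractional regularity, Hölder-in-time regularity in a negative-order space, and decay at infinity in $x$.

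The spatial bound $\E \| \rho_\varphi^R \|_{L^2_t H^\eta_x}^2 \lesssim 1$ with $\eta = 1/6$ is exactly the content of Proposition \ref{averaging_lemma}. For time regularity, I would test $\rho_\varphi^R$ against $\psi \in H^2(\mathbb{R}^d)$: the identity $\langle \rho_\varphi^R(t) - \rho_\varphi^R(s), \psi \rangle_{L^2_x} = \langle f^R(t) - f^R(s), \varphi \otimes \psi \rangle_{L^2_{x,v}}$ and the product bound $\| \varphi \otimes \psi \|_{H^2(\mathbb{R}^{2d})} \lesssim \| \psi \|_{H^2(\mathbb{R}^d)}$ (valid since $\varphi \in C_c^\infty$) transfer Lemma \ref{kolmo_f} to $\E \| \rho_\varphi^R(t) - \rho_\varphi^R(s) \|_{H^{-2}_x}^{2q} \lesssim |t-s|^q$ for some $q > 1$. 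Kolmogorov's continuity theorem then delivers $\E \| \rho_\varphi^R \|_{C^\alpha_t H^{-2}_x}^{2q} \lesssim 1$ for some $\alpha \in (0,1/2)$.

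For spatial decay, Cauchy-Schwarz in $v$ using the compact support of $\varphi$ gives $|\rho_\varphi^R(t,x)|^2 \lesssim \int |f^R(t,x,v)|^2 dv$. Under the hypothesis, $p = 1 + 1/\theta > 2$ since $\theta < 1$, so setting $\epsilon = \delta(p-2)/(p-1) > 0$, $a = (p-2)/(p-1)$, $b = 1/(p-1)$ yields $a + b = 1$, $a\delta = \epsilon$ and $a + pb = 2$. The factorization $|x|^\epsilon |f^R|^2 = (|x|^\delta f^R)^a (|f^R|^p)^b$ and Hölder's inequality then give
\[
\int_{\mathbb{R}^d} |x|^\epsilon |\rho_\varphi^R(t,x)|^2 dx \lesssim \Big( \int_{\mathbb{R}^{2d}} |x|^\delta f^R(t,z) dz \Big)^a \Big( \int_{\mathbb{R}^{2d}} |f^R(t,z)|^p dz \Big)^b.
\]
A further application of Hölder on $\Omega$ combined with Propositions \ref{Lp_estimate} and \ref{chap3-moments} then yields $\E \int_0^T \int_{\mathbb{R}^d} |x|^\epsilon |\rho_\varphi^R|^2 dx dt \lesssim 1$.

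Finally, for $M > 0$ the set $K_M$ of $g$ satisfying $\| g \|_{L^2_t H^\eta_x} \le M$, $\| g \|_{C^\alpha_t H^{-2}_x} \le M$ and $\int_0^T \int (1 + |x|^\epsilon)|g|^2 \le M$ is relatively compact in $L^2([0,T];L^2(\mathbb{R}^d))$: the Aubin-Lions-Simon lemma applied on each ball $B_r$ (using $H^\eta(B_r) \hookrightarrow L^2(B_r) \hookrightarrow H^{-2}(B_r)$ with the first embedding compact) together with a diagonal extraction gives a subsequence convergent in $L^2_t L^2(B_r)$ for every $r$, and the uniform tail bound $\int_{|x|>r}|g|^2 dx \le M r^{-\epsilon}$ promotes this to convergence in $L^2_t L^2(\mathbb{R}^d)$. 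Three Markov inequalities with the bounds above then conclude. The chief subtlety lies in the tail estimate, where the interpolation exponents must be carefully matched to $p$ and $\delta$ to exploit both available moment bounds; the spatial and time regularity bounds follow almost directly from earlier results.
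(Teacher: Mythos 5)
Your proposal is correct and rests on the same three uniform-in-$R$ ingredients as the paper's proof: the $L^2_t H^\eta_x$ regularity from the averaging lemma, a Kolmogorov-type $C^\alpha_t H^{-2}_x$ estimate on $\rho_\varphi^R$, and a spatial tail estimate that makes local compactness into global compactness. Your tail exponent $\epsilon = \delta(p-2)/(p-1)$ coincides exactly with the paper's $\gamma = \delta(1-\theta)$ once one substitutes $\theta = 1/(p-1)$, so the interpolation step is the same bound written via Hölder rather than Young. Where you diverge is mostly in presentation: you obtain the time-regularity estimate by explicitly transferring Lemma \ref{kolmo_f} through the tensor-product bound $\|\varphi \otimes \psi\|_{H^2(\mathbb{R}^{2d})} \lesssim \|\psi\|_{H^2(\mathbb{R}^d)}$, whereas the paper simply states the analogous Lemma \ref{kolmo_rho} as provable ``as previously''; your argument is the more self-contained of the two. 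For the compactness of $K_M$, you invoke Aubin--Lions--Simon on each ball $B_r$ with the chain $H^\eta(B_r) \hookrightarrow L^2(B_r) \hookrightarrow H^{-2}(B_r)$ and a diagonal extraction, while the paper goes through Arzelà--Ascoli in $C([0,T];H^{-2}_{V^{-1}}(\mathbb{R}^d))$ with the weight $V(x)=1+|x|$, followed by interpolation $H^\eta_{V^{-1}} \subset L^2_{V^{-1}} \subset H^{-2}_{V^{-1}}$; these are two standard routes to the same conclusion, and neither buys anything the other lacks here. One structural difference worth noting: your $K_M$ is phrased directly as a set of functions $g=\rho_\varphi$, while the paper's $K_M$ is parametrized by conditions on the underlying $f$, which lets it reuse the $L^p$ and moment bounds from Propositions \ref{Lp_estimate} and \ref{chap3-moments} without re-deriving a decay estimate on $\rho_\varphi$ itself --- but your derivation of the decoupled tail bound via Cauchy--Schwarz in $v$ and Hölder in $z$ is a clean substitute.
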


\begin{proof}
Let us introduce the weight function
$$
V(x) = 1 + |x|
$$
and the associated weighted spaces $H^2_{V}(\mathbb{R}^d)$ and $H^{-2}_{V^{-1}}(\mathbb{R}^d)$ as in \myref{weight1} and \myref{weight2}. 
One could prove the following lemma as previously.
\begin{lemma} \label{kolmo_rho}
For some $q > 1$, for all $t,s \in [0,T]$,
\begin{align*}
\E \Big[ \| \rho^{R}_\varphi(t) - \rho^{R}_\varphi(s)  \|_{H^{-2}}^{2q}  \Big]  \lesssim |t-s|^q.
\end{align*}
The constant involved in $\lesssim$ depends on $q$, $f_0$, $\varphi$ and $T$ only.\end{lemma}
We may now fix some $M > 0$ and naturally introduce the set
\begin{align*}
K_M = \left\{\rho_\varphi := \int \varphi(v) f dv \; | \; f \in {\cal F}_M \right\},
\end{align*}
where ${\cal F}_M$ denotes the set of functions $f \equiv f(t,x,v)$ satisfying, for $\eta = 1/6$ and some $\alpha \in (0,1/2)$,
\begin{align}
& \sup_{t \in [0,T]} \| f(t) \|_{L^p_z}^p \le M, \label{Lp_M}
\\
& \sup_{t \in [0,T]}  \int ( 1 + |x|^\delta + |v|^k) f(t) dz \le M, \label{moment_M}
\\
& \| \rho_\varphi \|_{L^2_t H^\eta_x}^2 \le M, \label{average_M}
\\
& \| \rho_\varphi \|_{C^\alpha_t H^{-2}} \le M. \nonumber
\end{align}
Markov's inequality, gives, for some $\gamma > 0$,
\begin{align*}
\Proba \Big[ \rho^{R}_\varphi \notin K_M \Big] & \le 
M^{-1} \E \Big[  \sup_{t \in [0,T]} \| f^R(t) \|_{L^p_z}^p   \Big]
+ M^{-1} \E \Big[  \sup_{t \in [0,T]}  \int ( 1 + |x|^\delta + |v|^k) f^R(t) dz \Big]
\\ & \; \;
+ M^{-1} \E \Big[ \| {\rho^{R}_\varphi} \|_{L^2_t H^\eta_x}^2 \Big]
+ M^{-\gamma} \E \Big[ \| {\rho^{R}_\varphi} \|_{C^\alpha_t H^{-2}}^\gamma \Big]
\end{align*}
which tends to zero uniformly in $R > 0$ as $M$ goes to infinity, thanks to Proposition \ref{Lp_estimate}, \ref{chap3-moments}, \ref{averaging_lemma} and Lemma \ref{kolmo_rho}. It only remains to prove that $K_M$ is a relatively compact subset of 
$ L^{2}([0,T] ; L^{2}(\mathbb{R}^d)) $. 
Let us introduce a sequence $(\rho_\varphi^n)_{n \ge 1}$ in $K_M$.
\vspace{3mm}

First, let us show that $(\rho_\varphi^n)_n$ is compact locally in space, that is  in 
$L^{2} \Big( [0,T] ; L^{2}(B(0,r)) \Big)$ for any $r > 0$. Since 
$ \| \rho^n_\varphi \|_{L^\infty_t L^1_x} \le \| \varphi \|_{L^\infty}$, we deduce from \myref{Lp_M} that
$$
\| \rho^n_\varphi \|_{L^\infty_t L^2_x} \lesssim M. 
$$
Similarly to the proof of Proposition \ref{tight1}, we may then use Arzelà-Ascoli's theorem to deduce that  $(\rho_\varphi^n)_{n}$ converges in $C([0,T] ; H^{-2}_{V^{-1}}(\mathbb{R}^d))$ up to some subsequence (which we omit for clarity). An interpolation inequality (in weighted Sobolev spaces: $H^\eta_{V^{-1}} \subset L^2_{V^{-1}} \subset H^{-2}_{V^{-1}}$) yields
\begin{align*}
\| \rho \|_{L^2(B(0,r))} \lesssim \| \rho \|_{L^2_{V^{-1}}} \lesssim \| \rho \|_{H^{-2}_{V^{-1}}}^\tau 
\| \rho \|_{H^\eta_{V^{-1}}}^{1-\tau}
\lesssim \| \rho \|_{H^{-2}_{V^{-1}}}^\tau 
\| \rho \|_{H^\eta}^{1-\tau}
\end{align*}
where $\tau = \frac{\eta}{\eta + 2} \in (0,1)$. It follows that
\begin{align*}
\int_0^T \| \rho_\varphi^n(t) - \rho_\varphi^m(t) \|_{L^2(B(0,r))}^{2} dt 
& \le \int_0^T \| \rho_\varphi^n(t) - \rho_\varphi^m(t) \|_{H^{-2}_{V^{-1}}}^{2 \tau } \times  
\| \rho_\varphi^n(t) - \rho_\varphi^m(t) \|_{H^{\eta}}^{2 (1-\tau) }
dt 
\\ & 
\le \Big( \int_0^T  \| \rho_\varphi^n(t) - \rho_\varphi^m(t) \|_{H^{-2}_{V^{-1}}}^{2 } dt  \Big)^\tau
\Big( \int_0^T  \| \rho_\varphi^n(t) - \rho_\varphi^m(t) \|_{H^{\eta}}^{2 } dt  \Big)^{1-\tau}
\\ &
\le T^\tau \| \rho_\varphi^n - \rho_\varphi^m \|_{C([0,T] ; H^{-2}_{V^{-1}})}^{2 \lambda} \times 
\| \rho_\varphi^n - \rho_\varphi^m \|^{2 (1-\tau)}_{L^{ 2 }_t H^\eta}.
\end{align*}
Thanks to \myref{average_M}, we deduce that
\begin{align*}
\| \rho_\varphi^n - \rho_\varphi^m \|_{L^{2}([0,T] ;  L^2(B(0,r))} \to 0 \; \text{ as } n,m \to \infty.
\end{align*}
To derive compactness globally in space, that is in $L^{2}([0,T] ; L^{2}(\mathbb{R}^d))$, it is enough to establish a uniform integrability estimate of the form
$$
\sup_{n \ge 1} \Big[ \int_0^T  \int_{|x| \ge r} |\rho_\varphi^n(t,x)|^{2} dx  dt \Big] \to 0 \; \text{ as } r \to \infty.
$$
To this intent, since $\varphi$ is compactly supported, we may simply write
\begin{align*}
\int_{|x| \ge r} | \rho^n_\varphi(x) |^2 dx \lesssim \int_{\mathbb{R}^d} \int_{|x| \ge r} |f^n(x,v)|^2 dx dv
& \le r^{-\gamma} \int_{\mathbb{R}^d} \int_{\mathbb{R}^d} (1 + |x|^\gamma) |f^n(x,v)|^2 dx dv
\\
& \lesssim r^{-\gamma} \Big( \int_{\mathbb{R}^{2d}} (1 + |x|^\delta) f^n(z) dz + \int_{\mathbb{R}^{2d}} |f^n(z)|^p dz \Big)
\end{align*}
where $\gamma = \delta(1-\theta) > 0$, according to \myref{compromise2}, and  then use the bounds \myref{Lp_M} and \myref{moment_M} to conclude.
\end{proof}

\end{subsection}

\begin{subsection}{Convergence of the martingale problem}
Let us introduce a sequence $R_n \to \infty$ and  a countable subset ${\cal D}$ of $C^\infty_c(\mathbb{R}^d)$, which we assume to contain the truncation functions
\begin{align}
\left\{ \theta_{R_n}, n \ge 1  \right\} \subset {\cal D} \subset C_c^\infty(\mathbb{R}^d).
\label{calD}
\end{align}
Recall that the function $\theta_R(v)$ has been introduced in \myref{theta}. Since ${\cal D}$ is countable, it follows from Proposition~\ref{tight1}, Proposition \ref{tight2} and Tykhonov's theorem that the family of random variables 
$(f^{R_n}, (\rho^{R_n}_\varphi)_{\varphi \in {\cal D}})_{n \ge 1}$ is tight in the space
$$C([0,T] ; H^{-\sigma}_{W^{-1}}(\mathbb{R}^{2d})) \times \Big( L^2([0,T] ; L^2(\mathbb{R}^d) \Big)^{\cal D}$$ for $\sigma > 0$.
Using Skorokhod's representation theorem, up to a subsequence of $(R_n)_n$ which we omit for simplicity, we may introduce random variables $\myhat f^n, \myhat \rho_\varphi^n, \myhat f, \myhat \rho_\varphi$ defined on some other probability space $(\myhat \Omega, \myhat{\cal F}, \myhat \Proba)$ such that, for all $n \ge 1$,
\begin{align*}
\hspace{-3mm}
 (\myhat f^n,  (\myhat \rho^n_\varphi)_{\varphi \in {\cal D}}) \sim 
 (f^{R_n},  ( \rho^{R_n}_\varphi)_{\varphi \in {\cal D}})
 \text{ in law, in }
C([0,T] ; H^{-\sigma}_{W^{-1}}(\mathbb{R}^{2d})) \times \Big( L^2([0,T] ; L^2(\mathbb{R}^d))\Big)^{\cal D},
\end{align*}
and  the following convergences hold $\myhat \Proba$-almost surely:
\begin{align*}
& \myhat f^n \to \myhat f \text{ in } C([0,T] ; H^{-\sigma}_{W^{-1}}(\mathbb{R}^{2d})) \text{ a.s}, 
\\
& \forall \varphi \in {\cal D}, \; \;  \myhat \rho^n_\varphi \to \myhat \rho_{\varphi} \text{ in } L^2([0,T] ; L^2(\mathbb{R}^d)) \text{ a.s}.
\end{align*}
More precisely, since Proposition \ref{Lp_estimate} and Proposition \ref{averaging_lemma} provide the bounds
\begin{align}
\myhat \E \Big[ \| \myhat f^n \|_{L^\infty_t L^2_x}^2 \Big] =  \E \Big[ \|  f^n \|_{L^\infty_t L^2_x}^2 \Big] \lesssim 1, \hspace{5mm}
\myhat \E \Big[ \| \myhat \rho^n_\varphi \|^2_{L^2_t H^\eta_x} \Big] = \E \Big[ \|  \rho^n_\varphi \|^2_{L^2_t H^\eta_x} \Big] \lesssim 1
\label{uniform_e}
\end{align}
uniformly in $n \ge 1$, we derive in particular that the families of random variables $(\myhat f^n)_n$ and $(\myhat \rho^n_\varphi)_n$ are uniformly integrable, and therefore
\begin{align}
& \myhat f^n \to \myhat f \text{ a.s and in } L^1\Big(\myhat \Omega ; C([0,T] ; H^{-\sigma}_{W^{-1}}(\mathbb{R}^{2d})) \Big).
\label{skoro3}
\\
& \forall \varphi \in {\cal D}, \; \;  \myhat \rho^n_\varphi \to \myhat \rho_{\varphi} \text{ a.s and in } L^1\Big(\myhat \Omega ; L^2([0,T] ; L^2(\mathbb{R}^d)) \Big).
\label{skoro2}
\end{align}

Consequently, up to a subsequence, we may also assume that
\begin{align}
\forall \varphi \in {\cal D}, \; \; \Big[ \myhat \rho^n_\varphi(t,x) \to \myhat \rho_\varphi (t,x), \; dt \otimes dx\text{-a.e} \Big], \; \; \myhat \Proba\text{-a.s}.
\label{ae_as_1}
\end{align}

\begin{remark}
For all $\varphi \in {\cal D}$, from the equality
$\rho^n_\varphi = \int \varphi(v) f^n dv$,  $\Proba$-a.s, it is clear that
$\myhat \rho^n_\varphi = \int_{\mathbb{R}^d} \varphi(v) \myhat f^n dv$,  $\myhat \Proba$-a.s.
The convergence \myref{skoro3} then guarantees that, for all $\varphi \in {\cal D}$, $\myhat \rho_\varphi$ is indeed given by
$$
\myhat \rho_\varphi = \int_{\mathbb{R}^d} \varphi(v) \myhat f dv,  \; \text{ $\myhat \Proba$-a.s}.
$$
\end{remark}
Let us now introduce the averaged quantities
\begin{align*}
\rho = \int_{\mathbb{R}^d} f dv, \hspace{10mm} j = \int_{\mathbb{R}^d} v f dv.
\end{align*}
By requiring some greater moments for the initial data, we may extend the convergence \myref{skoro2} to $\rho$ and $j$.

\begin{lemma} \label{rho_j}
Assume that the initial data $f_0$ satisfies, for some $\theta \in (0,1)$,
\begin{align}
\int_{z \in \mathbb{R}^{2d}} |f_0(z)|^{p} dz + \int_{z \in \mathbb{R}^{2d}} (1 + |v|^k) f_0(z) dz <  \infty \; \text{ with }  \; p = 1 + \frac{1}{\theta}, \hspace{4mm} k > \frac{d + 2}{1-\theta}.
\label{compromise3}
\end{align}
Then the following convergences hold in $L^1\Big(\myhat \Omega ; L^2([0,T] ; L^2(\mathbb{R}^d)) \Big)$:
\begin{align*}
& \myhat \rho^n \to \myhat \rho, \hspace{10mm} \myhat j^n \to \myhat j, \hspace{10mm}
\myhat \rho^n_{\theta_{R_n}} \to \myhat j,
\\
& \phi * \myhat \rho^n \to \phi * \myhat \rho, \hspace{10mm} \phi * \myhat j^n \to \phi * \myhat j,
 \hspace{10mm} \phi * \myhat \rho^n_{\theta_{R_n}} \to \phi * \myhat j
\end{align*}
Consequently, up to a subsequence, we may also assume that these convergences hold
$dt \otimes dx$ almost everywhere, $\myhat \Proba$ almost surely.
\end{lemma}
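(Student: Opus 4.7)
The plan is to transfer the already-established convergence $\myhat\rho^n_\varphi \to \myhat\rho_\varphi$ for $\varphi \in \mathcal{D}$ to the unweighted averages $\rho = \int f\, dv$ and $j = \int v f\, dv$ by means of a smooth truncation in $v$, controlling the tails via the uniform moment bound from Proposition \ref{chap3-moments} and the uniform $L^p$ bound from Proposition \ref{Lp_estimate}. Since only $\{\theta_{R_n}, n \ge 1\} \subset \mathcal{D}$ is prescribed by \myref{calD}, we may enlarge the countable set $\mathcal{D}$ so as to also contain a sequence of cut-offs $\varphi_L \in C_c^\infty(\mathbb{R}^d)$, $L \in \mathbb{N}$, with $\varphi_L \equiv 1$ on $|v| \le L$, $\mathrm{Supp}(\varphi_L) \subset B(0, 2L)$ and $0 \le \varphi_L \le 1$, together with their weighted analogues $v \mapsto v_i\, \varphi_L(v)$, $i = 1, \ldots, d$. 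I then decompose
$$
\myhat\rho^n - \myhat\rho = (\myhat\rho^n_{\varphi_L} - \myhat\rho_{\varphi_L}) + (\myhat\rho^n_{1-\varphi_L} - \myhat\rho_{1-\varphi_L}),
$$
and similarly $\myhat j^n_i - \myhat j_i = (\myhat \rho^n_{v_i \varphi_L} - \myhat \rho_{v_i \varphi_L}) + (\myhat \rho^n_{v_i(1-\varphi_L)} - \myhat \rho_{v_i(1-\varphi_L)})$ for each component.

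The smooth pieces $\myhat\rho^n_{\varphi_L} - \myhat\rho_{\varphi_L}$ and $\myhat \rho^n_{v_i \varphi_L} - \myhat \rho_{v_i \varphi_L}$ converge to zero in $L^1(\myhat\Omega; L^2([0,T]; L^2(\mathbb{R}^d)))$ as $n \to \infty$ for each fixed $L$, directly by \myref{skoro2} applied to the test functions $\varphi_L, v_i \varphi_L \in \mathcal{D}$. It therefore suffices to show that the tails $\myhat \E \| \myhat \rho^n_{1-\varphi_L} \|_{L^2_{t,x}}$ and $\myhat \E \| \myhat \rho^n_{v_i(1-\varphi_L)} \|_{L^2_{t,x}}$, as well as their analogues with $\myhat\rho^n$ replaced by $\myhat\rho$, tend to zero as $L \to \infty$, uniformly in $n$.

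This uniform tail control is the main technical obstacle. The plan is to combine the elementary bound $\int_{|v|>L} |v|^s f\, dv \le L^{-\gamma} \int |v|^{s+\gamma} f\, dv$ with a Hölder/interpolation argument in $v$ that trades the $L^p_v$ norm of $f^n$ against its weighted moments, in order to produce an estimate of the form
$$
\int_{\mathbb{R}^d_x} \Big( \int_{|v|>L} |v|^s f^n(t,x,v)\, dv \Big)^2 dx \lesssim L^{-2\gamma}\, \Big( 1 + \| f^n(t) \|_{L^p_z}^p + \int (1 + |v|^k) f^n(t,z)\, dz \Big)^\kappa
$$
valid for $s \in \{0,1\}$ and some $\gamma, \kappa > 0$; taking expectation and invoking Propositions \ref{Lp_estimate} and \ref{chap3-moments} then yields the desired uniform decay. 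The strengthening from $k > 4/(1-\theta)$ in Proposition \ref{averaging_lemma} to the present $k > (d+2)/(1-\theta)$ is precisely what produces a positive exponent $\gamma$ in this interpolation once the extra factor $|v|$ required by $j$ is absorbed.

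For the convergence $\myhat \rho^n_{\theta_{R_n}} \to \myhat j$, I write, for $L$ fixed and $n$ large enough that $R_n > 2L$,
$$
\myhat \rho^n_{\theta_{R_n}} - \myhat j = \myhat \rho^n_{\theta_{R_n} - v \varphi_L} + \big( \myhat \rho^n_{v \varphi_L} - \myhat j_{\varphi_L} \big) + \big( \myhat j_{\varphi_L} - \myhat j \big),
$$
where $\myhat j_{\varphi_L} := \int v\, \varphi_L(v) \myhat f\, dv$. Since $\theta_{R_n}(v) = v$ on $|v| \le R_n$ and $|\theta_{R_n}(v)| \le |v|$ everywhere, the first term is pointwise dominated by $\int |v| (1 - \varphi_L(v)) \myhat f^n\, dv$, i.e. a tail of the type just estimated; the middle term vanishes as $n \to \infty$ by \myref{skoro2} applied componentwise to each $v_i \varphi_L \in \mathcal{D}$; and the last term is again a tail for $j$, handled exactly as the $\rho$ case. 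Finally, the convolved statements $\phi * \myhat\rho^n \to \phi * \myhat\rho$, $\phi * \myhat j^n \to \phi * \myhat j$ and $\phi * \myhat \rho^n_{\theta_{R_n}} \to \phi * \myhat j$ follow at once from Young's inequality $\| \phi * g \|_{L^2_x} \le \| \phi \|_{L^1} \| g \|_{L^2_x}$, which transfers any $L^2_{t,x}$-convergence of $\myhat\rho^n, \myhat j^n, \myhat\rho^n_{\theta_{R_n}}$ to their convolutions with $\phi$.
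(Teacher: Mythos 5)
Your proposal follows essentially the same strategy as the paper's proof: split each difference into a bulk part that converges via the Skorokhod convergence \myref{skoro2} and a velocity tail that is controlled uniformly in $n$ by combining the Cauchy--Schwarz/Young interpolation in $v$ with the uniform $L^p$ and moment bounds of Propositions \ref{Lp_estimate} and \ref{chap3-moments}, the strengthened assumption $k>(d+2)/(1-\theta)$ entering precisely as you say to absorb the extra factor $|v|$ and the dimensional weight $(1+|v|)^m$, $m>d$, needed by Cauchy--Schwarz, and then pass the convolved statements through Young's inequality. The only cosmetic difference is that you enlarge $\mathcal{D}$ with custom cut-offs $\varphi_L$ and $v_i\varphi_L$, whereas the paper reuses the truncations $\theta_{R_N}$ already in $\mathcal{D}$ as the intermediate test functions in the three-term decomposition $\|\myhat j^n-\myhat j\| \le \|\myhat j^n-\myhat\rho^n_{\theta_{R_N}}\|+\|\myhat\rho^n_{\theta_{R_N}}-\myhat\rho_{\theta_{R_N}}\|+\|\myhat\rho_{\theta_{R_N}}-\myhat j\|$.
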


\begin{proof} Let us, for instance, prove the convergence of $\myhat j^n$.
For fixed $N \ge 1$, denoting $\varphi = \theta_{R_N}$,
\begin{align}
\| \myhat j^n - \myhat j \|_{L^2_t L^2_x} \le \| \myhat j^n -  \myhat \rho^n_{\varphi} \|_{L^2_t L^2_x}
+ \| \myhat \rho^n_\varphi - \myhat \rho_\varphi  \|_{L^2_t L^2_x} + \| \myhat \rho_\varphi - \myhat j \|_{L^2_t L^2_x}
\label{split1}
\end{align}
and we note that, for any $\gamma > 0$ and $m > d$,
\begin{align*}
| \myhat j^n -  \myhat \rho^n_{\varphi}|^2 & = \Big| \int |v- \theta_{R_N}(v)| \myhat f^n dv \Big|^2
\lesssim \Big| \int_{|v| \ge R_N} |v| \myhat f^n dv \Big|^2 \le R_N^{-2\gamma} \Big| \int_{\mathbb{R}^d} |v|^{1 + \gamma} \myhat f^n dv  \Big|^2
\\
& \lesssim R_N^{-2\gamma} \int_{\mathbb{R}^d} (1 + |v|)^{2 + 2 \gamma + m} |\myhat f^n|^2 dv
\\
&
\lesssim R_N^{-2\gamma} \Big( \int_{\mathbb{R}^d} (1 + |v|)^{\frac{2 + 2\gamma + m}{1- \theta}} \myhat f^n dv 
+ \int_{\mathbb{R}^d} |\myhat f^n|^{1 + \frac{1}{\theta}} dv  \Big)
\\
& \lesssim R_N^{-2\gamma} \Big( \int_{\mathbb{R}^d} (1 + |v|)^{k} \myhat f^n dv 
+ \int_{\mathbb{R}^d} |\myhat f^n|^{p} dv  \Big)
\end{align*}
for $(\gamma,m)$ close enough to $(0,d)$. As a result,
\begin{align*}
\sup_{n \ge 1} \myhat \E \Big[  \| \myhat j^n -  \myhat \rho^n_{\varphi} \|_{L^2_t L^2_x}^2  \Big] \lesssim  R_N^{-2 \gamma} \to 0 \text{ as } N \to \infty.
\end{align*}
Since, from \myref{skoro3}, we classically derive
$$
\myhat \E \Big[ \int_0^T \int_{\mathbb{R}^{2d}} \Big(  (1 + |v|)^{k} \myhat f + |\myhat f|^p \Big) dz dt
 \Big] 
 \le \sup_{n \ge 1} \myhat \E \Big[ \int_0^T \int_{\mathbb{R}^{2d}} \Big(  (1 + |v|)^{k} \myhat f^n + |\myhat f^n|^p \Big) dz dt
 \Big]  \lesssim 1
$$
we deduce similarly that $ \myhat \E \Big[  \| \myhat j -  \myhat \rho_{\varphi} \|_{L^2_t L^2_x}^2  \Big] \to 0$ as $N$ goes to infinity. We may hence come back to \myref{split1} and conclude. The convergence of the convoluted functions is easily deduced.

\end{proof}

For fixed $n \ge 1$, $f^{R_n}$ defines a (strong) solution of \myref{spde_reg} on $(\Omega, {\cal F}, \Proba)$. In particular, it satisfies the associated martingale problem: recalling the operator ${\cal L}_R[f]$ defined in \myref{dual_operator},
for all $\Psi \in C_c^\infty(\mathbb{R}^{2d})$, the process
\begin{align}
& M^n_\Psi(t) = \langle \Psi, f^{R_n}(t) \rangle -  \langle \Psi, f_0 \rangle  - \int_0^t \Bigl< {\cal L}_{R_n}[f^{R_n}(s)] \Psi, f^{R_n}(s) \Bigr> ds, \hspace{5mm} t \in [0,T]
\label{chap3-martingale}
\end{align}
defines a continuous, real valued $L^2$ martingale on $(\Omega, {\cal F}, \Proba)$ with respect to the filtration
$$
{\cal F}^n_t = \sigma \Big( f^{R_n}(s) \in H^{-\sigma}_{W^{-1}}(\mathbb{R}^{2d}), \; \; s \in [0,t]  \Big), \; \; \; t \in [0,T].
$$
Its quadratic variation is given by
\begin{align}
\Big[ M^n_\Psi \Big](t) = V^n_\Psi(t) :=  \int_0^t \Big| \Bigl< K_{R_n}[f^{R_n}(s)] \cdot \nabla_v \Psi, f^{R_n}(s) \Bigr>  \Big|^2 ds.
\label{quad_var}
\end{align}

We are now ready to state the following result.

\begin{proposition} \label{martingale_problem}
Let us introduce, on $(\myhat \Omega, \myhat {\cal F}, \myhat \Proba)$, the filtration
$$
\myhat {\cal F}_t = \sigma \Big( \myhat f(s) \in H^{-\sigma}_{W^{-1}}(\mathbb{R}^{2d}), \; \; s \in [0,t]  \Big), \; \; \; t \in [0,T].
$$
Recalling the operator ${\cal L}[f]$ defined in \myref{limit_operator}, for all test functions $\Psi$ of the form
\begin{align}
\Psi(x,v) = \Psi_1(x) \Psi_2(v), \hspace{5mm} \Psi_1, \Psi_2 \in C_c^\infty(\mathbb{R}^d) \text{ with } \nabla_v \Psi_2 \in {\cal D},
\label{form_psi}
\end{align}
the process
\begin{align*}
& \myhat M_\Psi(t) = \langle \Psi, \myhat f(t) \rangle -  \langle \Psi, f_0 \rangle  - \int_0^t \Bigl< {\cal L}[\myhat f(s)] \Psi, \myhat f(s) \Bigr> ds, \hspace{5mm} t \in [0,T]
\end{align*}
defines a continuous, real-valued $L^2$ martingale with respect to $(\myhat {\cal F}_t)_{t \ge 0}$, with quadratic variation
\begin{align*}
\Big[ \myhat M_\Psi \Big](t) = \myhat V_\Psi(t) := \int_0^t \Big| \Bigl< K[ \myhat f(s)] \cdot \nabla_v \Psi, \myhat f(s) \Bigr>  \Big|^2 ds.
\end{align*}

\end{proposition}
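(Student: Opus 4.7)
The plan is to identify $\widehat M_\Psi$ as the limit of the martingales $\widehat M^n_\Psi$ associated with the regularized solutions $f^{R_n}$, relying on the Skorokhod-type convergences \eqref{skoro3}--\eqref{skoro2} and the reinforced strong convergences of Lemma~\ref{rho_j}. First, by construction (Proposition~\ref{regular}), the process $M^n_\Psi$ built in \eqref{chap3-martingale} from $f^{R_n}$ is a continuous $L^2$-martingale on $(\Omega, (\mathcal F^n_t), \mathbb P)$ with quadratic variation $V^n_\Psi$ given by \eqref{quad_var}. Because both $M^n_\Psi(t)$ and $V^n_\Psi(t)$ are measurable functionals of the augmented path $(f^{R_n}, (\rho^{R_n}_\varphi)_{\varphi\in\mathcal D})$, the equality in law furnished by Skorokhod's representation transfers the martingale property to the corresponding processes $\widehat M^n_\Psi, \widehat V^n_\Psi$ on $(\widehat\Omega, (\widehat{\mathcal F}^n_t), \widehat{\mathbb P})$ with $\widehat{\mathcal F}^n_t = \sigma(\widehat f^n(s), s\le t)$. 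Equivalently, for any $k\ge 1$, $0\le s_1<\ldots<s_k\le s<t$ and bounded continuous $\Phi$ on $(H^{-\sigma}_{W^{-1}})^k$,
$$\widehat{\mathbb E}\bigl[(\widehat M^n_\Psi(t) - \widehat M^n_\Psi(s))\,\Phi(\widehat f^n(s_1),\ldots,\widehat f^n(s_k))\bigr]=0,$$
and similarly for $(\widehat M^n_\Psi)^2-\widehat V^n_\Psi$.

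The next step is to show $\widehat M^n_\Psi(t)\to \widehat M_\Psi(t)$ and $\widehat V^n_\Psi(t)\to \widehat V_\Psi(t)$ in $L^1(\widehat\Omega)$. The pointwise evaluation $\langle\Psi,\widehat f^n(t)\rangle\to\langle\Psi,\widehat f(t)\rangle$ follows from \eqref{skoro3} together with the uniform bound \eqref{uniform_e} (for uniform integrability). For the drift and quadratic-variation integrands, the transport piece $\langle v\cdot\nabla_x\Psi,\widehat f^n\rangle$ is handled directly by the duality pairing $H^{-\sigma}_{W^{-1}}/H^\sigma_W$, since $v\cdot\nabla_x\Psi\in C_c^\infty\subset H^\sigma_W$. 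For the Cucker--Smale, $S$ and quadratic diffusion contributions, the plan is to expand the convolution structure so that each integrand becomes a sum of products over $(t,x)$ of factors of the form $\psi*\widehat\rho^n$, $\psi*\widehat j^n$, $\widetilde\psi*\widehat\rho^n$, $\widetilde\psi*\widehat j^n$ on one side and averaged quantities $\widehat\rho^n_{\nabla_v\Psi_2}$, $\widehat\rho^n_{\Psi_2}$ (or mixed quadratic analogues) on the other. The first family converges strongly in $L^2_tL^2_{x,\mathrm{loc}}$ by Lemma~\ref{rho_j}; the second converges strongly in $L^2$ when the test function lies in $\mathcal D$ (hypothesis $\nabla_v\Psi_2\in\mathcal D$) and, for the remaining test functions that appear, at least weakly in $L^2_{t,x}$ via the uniform bound of Proposition~\ref{Lp_estimate}. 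A strong--weak product in $L^2$ converges in the sense of distributions in $L^1_{t,x}$, which is enough when integrated against the bounded compactly supported factor $\Psi_1$.

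The main obstacle is the Motsch--Tadmor contribution: after factorization one obtains integrals of the form
$$\int_0^t\int_{\mathbb R^d}\Psi_1(x)\,\frac{(\phi*\widehat\rho^n_{\theta_{R_n}})(s,x)}{R_n^{-1}+(\phi*\widehat\rho^n)(s,x)}\cdot\widehat\rho^n_{\nabla_v\Psi_2}(s,x)\,dx\,ds,$$
in which the ratio is a nonlinear functional of $\widehat f^n$. By Lemma~\ref{rho_j}, up to extraction, the numerator and denominator converge $\widehat{\mathbb P}\otimes dt\otimes dx$-almost everywhere to $\phi*\widehat j$ and $\phi*\widehat\rho$ respectively, while $R_n^{-1}\to 0$; on $\{\phi*\widehat\rho>0\}$ the ratio tends pointwise to the Motsch--Tadmor velocity $u[\widehat f]$ appearing in $\mathcal L[\widehat f]$, and on the complement the compatibility $\widehat\rho^n_{\theta_{R_n}}\to\widehat j$ forces the numerator to vanish at the limit. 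To upgrade this pointwise convergence to convergence in $L^1(\widehat\Omega\times[0,t]\times\mathbb R^d)$, the plan is to invoke Vitali's theorem, using as equi-integrability estimate the bound $\int|u_{R_n}[\widehat f^n]|^2\widehat\rho^n\,dx\lesssim\int|v|^2\widehat f^n\,dz$ from Proposition~\ref{ineq_u} combined with the moment and $L^p$ estimates of Propositions~\ref{chap3-moments} and~\ref{Lp_estimate}.

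Combining these convergences yields $\widehat M^n_\Psi(t)\to\widehat M_\Psi(t)$ and $\widehat V^n_\Psi(t)\to\widehat V_\Psi(t)$ in $L^1(\widehat\Omega)$. Passing to the limit in the martingale identity above, using the continuity of $\Phi$ and the almost-sure convergence $\widehat f^n\to\widehat f$ in $C([0,T];H^{-\sigma}_{W^{-1}})$, gives the martingale property of $\widehat M_\Psi$ with respect to $(\widehat{\mathcal F}_t)$; the same argument applied to $(\widehat M^n_\Psi)^2-\widehat V^n_\Psi$ identifies the quadratic variation. Continuity of $\widehat M_\Psi$ is inherited from that of $t\mapsto\widehat f(t)$ in $H^{-\sigma}_{W^{-1}}$ and from the absolute continuity of the Bochner integrals, while $L^2$-integrability of $\widehat M_\Psi(t)$ follows from the uniform bounds, now passed to the limit.
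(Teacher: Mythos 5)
Your overall architecture matches the paper's: transfer the martingale identities from $\Omega$ to $\widehat\Omega$ via equality in law and Skorokhod's representation, write them as expectation equalities against bounded continuous functionals of finite-dimensional marginals, and pass each piece of the drift and quadratic variation to the limit. The handling of the linear/transport pieces, the $L^2$ conclusion at the end, and the use of uniform bounds for integrability are all in line with the paper.

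Two points deserve closer scrutiny. First, for the Cucker--Smale, $S_R$ and diffusion pieces, you propose to ``expand the convolution structure'' into products of $x$-convolutions $\psi*\widehat\rho^n$, $\psi*\widehat j^n$, $\widetilde\psi*\widehat\rho^n$, $\widetilde\psi*\widehat j^n$ against averaged densities. The regularized coefficients carry the truncation $\theta_{R_n}(w-v)$, which prevents the microscopic kernel from factoring into an $x$-convolution times an affine function of $v$ --- the decomposition you describe is only available \emph{after} the truncation has been removed, which is itself part of what must be proved. Moreover, $\psi$ and $\widetilde\psi$ are merely bounded, not in $L^1$ or compactly supported, so the strong $L^2_tL^2_x$ convergence of $\rho^n$, $j^n$ from Lemma~\ref{rho_j} does not transfer to $\psi*\rho^n$, $\psi*j^n$ by any convolution mapping property; you would need an extra tightness/dominated-convergence argument in $x$. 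The paper avoids both issues by keeping the convolutions $\Phi^i_n*\widehat f^n$ intact, proving pointwise convergence at a fixed $(x,v)\in\mathrm{Supp}\,\Psi$ by interpolating the truncation level ($\Phi^1_n$ vs.\ a fixed $\Phi^1_m$) and pairing against the $C([0,T];H^{-\sigma}_{W^{-1}})$ convergence; a uniform $L^{2+\gamma}$ bound then upgrades this to convergence in expectation.

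The genuine gap is in the Motsch--Tadmor term. You propose pointwise a.e.\ convergence of $\widehat J^n = u_{R_n}[\widehat f^n]\cdot\widehat\rho^n_{\nabla_v\Psi_2}$ on all of $[0,T]\times\mathbb R^d$ and then invoke Vitali. But on the set $A_r=\{\phi*\widehat\rho=0\}$ the ratio is a genuine $0/0$ indeterminacy: the denominator $R_n^{-1}+\phi*\widehat\rho^n\to 0$ and, by Cauchy--Schwarz, the numerator $\phi*\widehat\rho^n_{\theta_{R_n}}\to\phi*\widehat j=0$, so the pointwise limit of $u_{R_n}[\widehat f^n]$ on $A_r$ is undetermined. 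Your sentence that ``the compatibility forces the numerator to vanish at the limit'' does not resolve this --- both terms vanish, and $\widehat J^n$ could oscillate on $A_r$. Vitali's theorem therefore cannot be applied as stated: the convergence-in-measure hypothesis fails precisely on $A_r$. The paper's route is different and essential here: extract a weak-$*$ limit $\widehat J$ of $\widehat J^n$ in $L^q(\widehat\Omega;L^\infty_t L^q_x)$ (Proposition~\ref{ineq_u} and Proposition~\ref{Lp_estimate} give the uniform bound), prove by the integral estimate $\int_{A_r}|\widehat J^n|\lesssim\bigl(\int_{A_r}\widehat\rho^n\bigr)^{1/2}\to 0$ that $\widehat J=0$ a.e.\ on $A_r$, and only then use the pointwise a.e.\ convergence from Lemma~\ref{rho_j} on the complement $\{\phi*\widehat\rho>0\}$ to identify $\widehat J=u[\widehat f]\cdot\widehat\rho_{\nabla_v\Psi_2}$. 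The same device is needed again for the quadratic variation (on products $\widehat J^n(\sigma_1,x)\widehat J^n(\sigma_2,y)$), which your outline does not address.
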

\begin{remark} \label{rem_psi}
The assumption $\nabla_v \Psi_2 \in {\cal D}$ in \myref{form_psi} is only technical: for any given countable family $F~=~(\Psi_2)_{\Psi_2 \in F}$ of test functions in $C_c^\infty(\mathbb{R}^{d})$, on can initially choose the countable subset ${\cal D}$ such that $\{ \nabla_v \Psi_2, \; \Psi_2 \in F \} \subset {\cal D}$, so that the conclusion of Proposition \ref{martingale_problem} holds true for all $\Psi(x,v) = \Psi_1(x) \Psi_2(v)$ with $\Psi_1 \in C_c^\infty(\mathbb{R}^d)$ and $\Psi_2 \in F$.
\end{remark}

\begin{proof}[Proof of Proposition \ref{martingale_problem}]
The martingale problem set on $\Omega$ may be expressed as 
\begin{align*}
& \E \Big[ \Big( M^n_\Psi(t) - M^n_\Psi(s) \Big) H^n \Big] = 0,
\\
& \E \Big[ \Big| M^n_\Psi(t) - M^n_\Psi(s) \Big|^2 H^n  \Big]
 = \E \Big[ \Big( V^n_\Psi(t) - V^n_\Psi(s)  \Big) H^n  \Big]
\end{align*}
for all $H^n =  h \Big( f^{R_n}(t_i), 1 \le i \le m \Big) $, where
$ 0 \le t_1, \ldots, t_m \le s \le t$,
and $h : (H^{-\sigma}_{W^{-1}}(\mathbb{R}^{2d}))^m \to \mathbb{R}$ is continuous and bounded,
Since the laws of $f^n$ and $\myhat f^n$ coincide, it follows that, on $\myhat \Omega$,
\begin{align}
& \myhat \E \Big[ \Big( \myhat M^n_\Psi(t) - \myhat M^n_\Psi(s) \Big) \myhat H^n \Big] = 0,
\label{chap3-mart1}
\\
& \myhat \E \Big[ \Big| \myhat M^n_\Psi(t) - \myhat M^n_\Psi(s) \Big|^2 \myhat H^n  \Big]
 = \myhat \E \Big[ \Big( \myhat V_\Psi^n(t) - \myhat V_\Psi^n(s)  \Big) \myhat H^n  \Big]
 \label{mart2}
\end{align}
where $ \myhat H^n =  h \Big( \myhat f^{n}(t_i), 1 \le i  \le m \Big)$ and $\myhat M^n_\Psi$ and $\myhat V^n_\Psi(t)$ are naturally defined on $\myhat \Omega$, as $\myhat M_\Psi$ and $\myhat V_\Psi(t)$ . We may decompose these into
\begin{align}
& \myhat M_\Psi^n(t) = \langle \Psi, \myhat f^{n}(t) \rangle - \langle \Psi, f_0 \rangle 
- \int_0^t \Big(  \Bigl< v \cdot \nabla_x \Psi, \myhat f^{n}(s) \Bigr> + \sum_{i=1}^4 {\cal M}^i_n(\myhat f^{n}(s))  \Big) ds,
\label{mart3}
\\
& \myhat V^n_\Psi(t) = \int_0^t \Big| {\cal M}^5(\myhat f^n(s)) \Big|^2 ds, \nonumber
\end{align}
where, recalling expressions \myref{reg},
\begin{align*}
 {\cal M}^1_n(f)  & = \int_x \int_v L^{CS}_{R_n} [f] \cdot \nabla_v \Psi f dx dv 
= \int_x \int_v (\Phi^1_n * f) \cdot \nabla_v \Psi f dx dv
, 
\\
 {\cal M}^2_n(f) &  = \int_x \int_v S_{R_n} [f] \cdot \nabla_v \Psi f dx dv 
 \nonumber \\
& = \frac{1}{2} \int_x \int_v \Big( \tld \psi * ( \Phi^2_n +  \Phi^3_n * f) - 
\| \tld \psi \|_{L^1}  ( \Phi^2_n +  \Phi^3_n * f) \Big) \cdot \nabla_v \Psi f dx dv
,
\\
 {\cal M}^3_n(f)  & = \frac{1}{2} \sum_{1 \le i,j \le d} \int_x \int_v K_{R_n}^i [f] K_{R_n}^j [f] \partial^2_{v_i v_j} \Psi f dx dv 
\nonumber \\
&  =  \frac{1}{2} \sum_{1 \le i,j \le d} \int_x \int_v 
\Big( \Phi^2_n + \Phi^3_n * f  \Big)^i  \Big( \Phi^2_n + \Phi^3_n * f \Big)^j  
\partial^2_{v_i v_j} \Psi f dx dv
, 
\\
 {\cal M}^4_n(f)  &=  \int_x \int_v  L^{MT}_R[f] \cdot \nabla_v \Psi f dx dv 
 = \int_x \int_v (u_R[f] - v) \cdot \nabla_v \Psi f dx dv
 \\ 
&  = \int_x  \frac{ \phi * \rho_{\theta_{R_n}}}{R^{-1} + \phi * \rho} \cdot \rho_{\nabla \Psi_2} \Psi_1 dx  - \int_x \int_v v \cdot \nabla_v \Psi f dx dv 
\\
{\cal M}^5_n(f) &= \int_x \int_v K_{R_n}[f] \cdot \nabla_v \Psi f dx dv
= \int_x \int_v (\Phi^2_n + \Phi^3_n * f) \cdot \nabla_v \Psi f dx dv 
\end{align*}
with
\begin{align*}
& \Phi^1_n(x,v) = \chi_{R_n}(x) \psi(x) \theta_{R_n}(-v),
\\
& \Phi^2_n(x) = \chi_{R_n}(x) F(x),
\\
& \Phi^3_n(x,v) = \chi_{R_n}(x) \tld \psi(x) \theta_{R_n}(-v),
\end{align*}
We wish to send $n$ to infinity in \myref{chap3-mart1} and \myref{mart2}. Thanks to the convergence \myref{skoro3}, the first three linear terms in \myref{mart3} cause no issue ; let us hence focus on the remaining terms. Let us consider the term involving ${\cal M}^1_n(f)$: defining the natural limiting term 
$${\cal M}^1(f) = \int_x \int_v (\Phi^1 * f) \cdot \nabla_v \Psi f dx dv$$ 
with $\Phi^1(x,v) = \psi(x) (-v)$, we have
\begin{align}
& \Big| \int_s^t {\cal M}^1_n(\myhat f^n(\sigma)) d\sigma - \int_s^t {\cal M}^1(\myhat f(\sigma)) d\sigma \Big|
\le {\cal J}^1_n + {\cal J}^2_n 
\label{M_split}
\end{align}
where
\begin{align*}
& {\cal J}^1_n =  \int_s^t \int_x \int_v \Big| (\Phi^1_n * \myhat f^n) - (\Phi^1 * \myhat f) \Big| | \nabla_v \Psi  || \myhat f | dx dv d\sigma,
\\
& {\cal J}^2_n =  \Big| \int_s^t \int_x \int_v  (\Phi^1_n * \myhat f^n) \cdot \nabla_v \Psi (\myhat f^n - \myhat f) dx dv d\sigma \Big|. 
\end{align*}
First, for ${\cal J}^1_n$ we have, thanks to \myref{uniform_e},
\begin{align}
\E \Big[ {\cal J}^1 \Big] & \lesssim \E \Big[ \int_s^t \int_{z \in Supp(\Psi)} \Big| (\Phi^1_n * \myhat f^n) - (\Phi^1 * \myhat f) \Big|^2 dz d\sigma \Big]^{1/2} \E \Big[ \int_s^t \int_z |\myhat f|^2 dz d\sigma \Big]^{1/2}
\nonumber \\ &
 \lesssim \E \Big[ \int_s^t \int_{z \in Supp(\Psi)} \Big| (\Phi^1_n * \myhat f^n) - (\Phi^1 * \myhat f) \Big|^2 dz d\sigma \Big]^{1/2}. \label{J1_1}
\end{align}
For all $n \ge m \ge 1$, for fixed $(x,v) \in Supp(\Psi)$, we may write
\begin{align*}
\myhat \E \Big[ \Big| \Phi^1_n * \myhat f^n - \Phi^1 * \myhat f \Big|^2  \Big] & \lesssim
\myhat  \E  \Big[ \Big| \Phi^1_m * \myhat f^n - \Phi^1_m * \myhat f \Big|^2  \Big]
+\myhat  \E \Big[ \Big| (\Phi^1_n - \Phi^1_m) * \myhat f^n  \Big|^2  \Big]
\\ & \hspace{5mm}
+ \myhat  \E \Big[ \Big| (\Phi^1 - \Phi^1_m) * \myhat f \Big|^2 \Big]
\end{align*}
which converges to $0$ as $n$ goes to infinity thanks to the convergence \myref{skoro3}, and the bounds
\begin{align*}
\myhat \E \Big[ \Big| (\Phi^1_n - \Phi^1_m) * \myhat f^n  \Big|^2  \Big] & \lesssim
\myhat  \E \Big[ \Big| \int \int_{|v-w| \ge R_m} |v-w| f^n(y,w) dy dw \Big|^2 \Big] 
\\ &
 \lesssim R_m^{-2} \Big( |v|^4 + \E \Big[ \int |w|^4 f^n dy dw \Big] \Big)
  \lesssim R_m^{-2} (1 + |v|^4)
\end{align*}
and, for some $\gamma > 0$, recalling that $(x,v) \in Supp(\Psi)$,
\begin{align*}
\myhat \E \Big[ \Big| \Phi_m^1 * \myhat f^n \Big|^{2 + \gamma}(x,v)  \Big] \lesssim 1 + \myhat \E \Big[  \int |w|^{2 + \gamma} \myhat f^n dy fw  \Big] \lesssim 1
\end{align*}
for all $n \ge m$. Note that this last bound also guarantees the uniform integrability in $(\omega, \sigma, x, v)$ of the integrand in \myref{J1_1}, so that $\myhat \E \Big[ {\cal J}^1_n \Big] \to 0$. 
Additionally, for $\gamma >0$ small enough, the bound
\begin{align*}
& \myhat \E \Big[ \int_s^t \int_{z \in Supp(\Psi)} \Big| \Phi^1_n * \myhat f^n  \Big|^{2 + \gamma} |\myhat f|^{2+\gamma} dz d\sigma  \Big] 
\\
& \hspace{30mm} \lesssim 
\myhat \E \Big[ \int_s^t \int_{z \in Supp(\Psi)} \Big( 1 + \int |w|^{2+ \gamma} \myhat f^n dy dw  \Big) |\myhat f|^{2+\gamma} dz d\sigma  \Big]
\nonumber \\
&  \hspace{50mm} \lesssim \E \Big[\int_s^t \int_{\mathbb{R}^{2d}} |v|^k \myhat f^n + |\myhat f|^p dz d\sigma \Big] \lesssim 1 
\end{align*}
guarantees that
$
\E \Big[ \Big| {\cal J}^1_n \Big|^{2 + \gamma} \Big] \lesssim 1.
$
Similarly, for ${\cal J}^2_n$, we write, for all $n \ge m \ge 1$,
\begin{align*}
{\cal J}^2_n \le &  \Big| \int_s^t \int_x \int_v (\Phi^1_m * \myhat f^m) \cdot \nabla_v \Psi (\myhat f^n - \myhat f) dx dv d\sigma \Big| 
\\
& \; \;  + \int_s^t \int_x \int_v \Big|(\Phi^1_m * \myhat f^m) - (\Phi^1_n * \myhat f^n) \Big| |\nabla_v \Psi | (|\myhat f^n| + |\myhat f|) dx dv d\sigma
\end{align*}
so that
\begin{align*}
\E \Big[ {\cal J}_n^2 \Big]  \lesssim & \; \E \Big[ \Big| \int_s^t \int_x \int_v \Big( (\Phi^1_m * \myhat f^m) \cdot \nabla_v \Psi \Big) (\myhat f^n - \myhat f) dx dv d\sigma \Big|  \Big] 
\\
&
+ \E \Big[ \int_s^t \int_{z \in Supp(\Psi)} \Big| (\Phi^1_m * \myhat f^m) - (\Phi^1_n * \myhat f^n) \Big|^2 dz d\sigma  \Big].
\end{align*}
Since, for fixed $m \ge 1$, $(\Phi^1_m * \myhat f^m) \cdot \nabla_v \Psi \in C_c^\infty(\mathbb{R}^{2d})$, we conclude in a similar fashion that $\E \Big[ {\cal J}^2_n \Big] \to 0$ and 
$\E \Big[ \Big| {\cal J}^2_n \Big|^{2 + \gamma} \Big] \lesssim 1$. Coming back to \myref{M_split}, we have shown that
\begin{align*}
& \int_s^t {\cal M}^1_n(\myhat f^n(\sigma)) d\sigma \to \int_s^t  {\cal M}^1(\myhat f(\sigma)) d\sigma \text{ in probability},
\\
& \myhat \E \Big[ \Big|  \int_s^t {\cal M}^1_n(\myhat f^n(\sigma)) d\sigma  \Big|^{2 + \gamma} \Big] \lesssim 1,
\end{align*}
which is sufficient to pass to the limit in the corresponding term of \myref{chap3-mart1} and the left-hand side of \myref{mart2}. 

The terms involving ${\cal M}^i_n(f)$ for $i=2,3,5$ can be treated with similar arguments.
Let us now handle the more delicate term, involving ${\cal M}^4_n(f)$: we wish to prove that
\begin{align}
\int_s^t \int_x \frac{ \phi * \myhat \rho^n_{\theta_{R_n}}}{ R_n^{-1} + \phi * \myhat \rho^n} \cdot \myhat \rho^n_{\nabla \Psi_2} \Psi_1 dx d\sigma = 
\int_s^t \int_x u_{R}[\myhat f^n]  \cdot \myhat \rho^n_{\nabla \Psi_2} \Psi_1 dx d\sigma
\label{delicate_term}
\end{align}
converges to the expected limiting term. Let us introduce
\begin{align}
\myhat J^n := u_{R_n}[\myhat f^n]  \cdot  \myhat \rho^n_{\nabla \Psi_2} = u_{R_n}[ \myhat f^n] \cdot \int_v  \nabla_v \Psi_2(v) \myhat f^n dv.
\label{Jn}
\end{align}
For some $q > 2$ and $\tau \in (0,1)$, we have
\begin{align*}
\int |\myhat J^n|^q dx  \lesssim \int_x \int_v  |u_{R_n}[\myhat f^n] |^q | \myhat f^n |^q dx dv 
& \lesssim \int_x \int_v |u_{R_n}[\myhat f^n] |^{\frac{q}{1-\tau}} \myhat f^n dx dv + \int_x \int_v |\myhat f^n|^{1 + \frac{q-1}{\tau}} dx dv
\\
& \lesssim \int_x \int_v (1 + |u_{R_n}[\myhat f^n] |^k) \myhat f^n dx dv + \int_x \int_v | \myhat f^n |^p dx dv
\end{align*} 
as soon as $ \tau = \theta (q-1)$ and $\frac{q}{1-\tau} \le \frac{4}{1- \theta}$, which can be met for $q$ close enough to $2$. Hence, thanks to Proposition \ref{ineq_u}, we derive
\begin{align}
\myhat \E \Big[ \sup_{t \in [0,T]} \int_x |\myhat J^n(t,x)|^q dx \Big] \lesssim \E \Big[ \sup_{t \in [0,T]} \Big( \int_z ( 1 + |v|)^k f^n(t,z) dz + \int_z  |f^n(t,z)|^p dz \Big) \Big]  \lesssim 1.
\label{bound_J}
\end{align}
As a consequence, up to some subsequence which we omit for simplicity,
\begin{align}
\myhat J^n \rightharpoonup \myhat J \; \; \text{ weak $*$ in} L^q \Big( \myhat \Omega ; L^\infty([0,T] ; L^q(\mathbb{R}^d)) \Big).
\label{weak_star}
\end{align}
It is clear that this weak convergence is enough for the term \myref{delicate_term} to pass to the limit in \myref{chap3-mart1}. Therefore, it only remains to identify the limit as
\begin{align}
\myhat J = \myhat u \cdot \myhat \rho_{\nabla \Psi_2}, \;  \text{ where }  \;
\myhat u(x,t) := u[\myhat f](x,t) = 
\left\{
\begin{array}{l l}
\displaystyle{ \frac{( \phi * \myhat j)(t,x)}{ (\phi * \myhat \rho) (t,x)} } & \text{ if } (\phi * \myhat \rho) (t,x) \neq 0
\\
0 & \text{ if } (\phi * \myhat \rho) (t,x) = 0.
\end{array}
\right.
\label{limit_u}
\end{align}
First, considering the set, for $r > 0$, $\omega \in \myhat \Omega$,
$$
A_r(\omega) = \{ (t,x) \in [0,T] \times B(0,r) \; | \; (\phi * \myhat \rho)(\omega,t,x) = 0 \},
$$
we have
\begin{align}
\int_{A_r} | \myhat J^n | dx dt & \lesssim \Big( \int_{(t,x) \in A_r} \int_{v \in Supp(\Psi_2)} |u_{R_n}[\myhat f^n] |^2 \myhat f^n dx dt \Big)^{1/2} \Big( \int_{A_r} \myhat \rho^n dx dt \Big)^{1/2}
\label{bound_J_2}
\end{align}
so that, with Proposition \ref{ineq_u},
\begin{align*}
\myhat \E \Big[ \int_{A_r(\omega)} |\myhat J^n| dx dt \Big] \lesssim
 \E \Big[  \int_0^T \int_{\mathbb{R}^{2d}} |v ]|^2 f^{R_n} dz dt  \Big]^{1/2}
 \myhat \E \Big[ \int_{A_r(\omega)} \myhat \rho^n dx dt \Big]^{1/2} \lesssim 
  \myhat \E \Big[ \int_{A_r(\omega)} \myhat \rho^n dx dt \Big]^{1/2}.
\end{align*}
From \myref{weak_star} and Lemma \ref{rho_j}, we deduce
\begin{align*}
\myhat \E \Big[ \int_0^T \int_{A_r(\omega)     }  |\myhat J| dx dt \Big] \lesssim \myhat \E \Big[\int_0^T \int_{ \{ x \in B(0,r) \; | \; \phi * \myhat \rho = 0 \}    }  \myhat \rho dx dt \Big]^{1/2} = 0.
\end{align*}
Indeed, it is easy to see that, when $\rho \in C(\mathbb{R}^d)$,
$$
 \int_{ \{ x \in B(0,r) \; | \; \phi * \myhat \rho = 0 \} }  \rho(x) dx = 0
$$
and the same equality is deduced for any $\rho \in L^2(\mathbb{R}^d)$ by density. Hence, $\myhat \Proba$ almost surely, $\myhat J = 0$ a.e on $A_r$. Since this holds for any $r > 0$, we deduce that, $\myhat \Proba$ almost surely, $\myhat J = 0$ holds whenever $\phi * \myhat \rho = 0$, so that we only have to check equality \myref{limit_u} whenever $( \phi * \myhat \rho) (t,x) \neq 0$.


Recalling 	\myref{ae_as_1} and Lemma \ref{rho_j}, we have
\begin{align*}
\myhat J^n = \frac{ \phi * \myhat \rho^n_{\theta_{R_n}}}{ R_n^{-1} + \phi * \myhat \rho^n} \cdot \myhat \rho^n_{\nabla \Psi_2} \to 
\frac{ \phi * \myhat j}{  \phi * \myhat \rho} \cdot \myhat \rho_{\nabla \Psi_2} = \myhat u \cdot \rho_{\nabla \Psi_2}
\end{align*}
almost everywhere on the set $\{ (t,x) \in [0,T] \times \mathbb{R}^d \; | \;  \phi * \myhat \rho \neq 0 \}$, $\myhat \Proba$ almost surely. It follows that equality \myref{limit_u} holds.

Finally, it remains to pass \myref{delicate_term} to the limit in the quadratic equality \myref{mart2}. To this intent, we simply notice that
\begin{align*}
\Big| \int_s^t \int_x \frac{ \phi * \myhat \rho^n_{\theta_{R_n}}}{ R_n^{-1} + \phi * \myhat \rho^n} \cdot \myhat \rho^n_{\nabla \Psi_2} \Psi_1 dx d\sigma \Big|^2 = 
\int_{\sigma_1} \int_{\sigma_2} \int_x \int_y \myhat J^n(\sigma_1, x) \myhat J^n(\sigma_2, y) \Psi_1(x) \Psi_2(y) dx dy d\sigma_1 d\sigma_2
\end{align*}
where $\myhat J^n(\sigma,x)$ is defined as in \myref{Jn}. We have the uniform bound
$$
\myhat \E \Big[ \Big| \sup_{\sigma_1 \in [0,T]} \sup_{\sigma_2 \in [0,T]} \int_x \int_y 
\Big|
\myhat J^n(\sigma_1,x) \myhat J^n(\sigma_2,y)  
  \Big|^q dx dy \Big|^{1/2}  \Big] = 
  \myhat \E \Big[ \Big| \sup_{\sigma \in [0,T]}\int_x \Big|
\myhat J^n(\sigma,x)  \Big|^q dx  \Big] \lesssim 1
$$
with $q/2 > 1$ so that, up to some subsequence
\begin{align*}
\myhat J^n(\sigma_1,x) \myhat J^n(\sigma_2,y)  \rightharpoonup
\myhat J(\sigma_1, \sigma_2, x, y)
 \text{ weak $*$ in } L^{q/2}\Big( \myhat \Omega ; L^\infty([0,T]^2 ; L^q((\mathbb{R}^d)^2)) \Big).
\end{align*}
This time, thanks to \myref{bound_J}
\begin{align*}
& \myhat \E \Big[ \int_{(x, \sigma_1) \in A_r} \int_{(y,\sigma_2) \in B(0,r) \times [0,T]} \Big| \myhat J^n(\sigma_1,x) \myhat J^n(\sigma_2,y) \Big| dx d\sigma_1 dy d\sigma_2  \Big]
\\
& \hspace{20mm} \lesssim \myhat \E \Big[ \Big( \int_{A_r} \Big| \myhat J^n \Big| dx dt \Big)^2 \Big]^{1/2} 
\myhat \E \Big[ \int_0^T \int_{B(0,r)} \Big| \myhat J^n \Big|^2 dx dt \Big]^{1/2}
\lesssim \myhat \E \Big[ \Big( \int_{A_r} \Big| \myhat J^n \Big| dx dt \Big)^2 \Big]^{1/2}.
\end{align*}
Using \myref{bound_J_2}, it follows that
\begin{align*}
\myhat \E \Big[ \Big( \int_{A_r} \Big| \myhat J^n \Big| dx dt \Big)^2 \Big] &
\lesssim \E \Big[ \int_0^T \int_{\mathbb{R}^{2d}} |u_{R_n}[f^{R_n}]|^4 |f^{R_n}|^2 dz dt \Big]^{1/2}
\myhat \E \Big[ \int_{A_r} |\myhat \rho^n|^2 dx dt \Big]^{1/2}
\\
& \lesssim \E \Big[ \int_0^T \int_{\mathbb{R}^{2d}} |v|^k f^{R_n} + |f^{R_n}|^p dz dt \Big]^{1/2}
\myhat \E \Big[ \int_{A_r} |\myhat \rho^n|^2 dx dt \Big]^{1/2}
\\
& \lesssim \myhat \E \Big[ \int_{A_r} | \myhat \rho^n|^2 dx dt \Big]^{1/2}
\end{align*}
Using again Lemma \ref{rho_j}, we deduce that, $\myhat \Proba$ almost surely, $\myhat J = 0$ almost everywhere on
$$
\left\{ (\sigma_1, \sigma_2, x,y) \in [0,T]^2 \times (\mathbb{R}^d)^2 \; | \; (\phi*\myhat \rho)(\sigma_1,x) = 0 \text{ or } (\phi*\myhat \rho)(\sigma_2,y) = 0 \right\}.
$$
The limit is then determined to be $\myhat J(\sigma_1, \sigma_2, x,y) = \Big( \myhat u(x) \cdot \rho_{\nabla \Psi_2}(\sigma_1,x) \Big) \Big( \myhat u(y) \cdot \rho_{\nabla \Psi_2}(\sigma_2,y) \Big)$ on the complementary set using pointwise convergence, as done previously.

\end{proof}

From the martingale problem of Proposition \ref{martingale_problem}, we classically construct a martingale solution of \myref{chap3-spde}, in the sense of Definition \ref{martingale_def}, using a martingale representation theorem. 
First, note that estimates 
\myref{mart_est1} and \myref{mart_est2} are easily derived from the convergence \myref{skoro3} using Fatou's Lemma. Introducing the process
\begin{align}
\myhat M(t) = \myhat f(t) - f_0 - \int_0^t ({\cal L}[\myhat f(s)])^* \myhat f(s) ds, \; \; \; t \in [0,T]
\label{H_mart}
\end{align}
we see that, for all test function $\Psi \in C^\infty_c(\mathbb{R}^{2d})$ of the form \myref{form_psi},
$$
\Bigl< \myhat M(t), \Psi \Bigr> = \myhat M_\Psi(t), \; \; \; t \in [0,T]
$$
which is a continuous $L^2$ martingale with respect to the filtration $(\myhat {\cal F}_t)_{t \in [0,T]}$, with quadratic variation $\myhat V_\Psi$. With Remark \ref{rem_psi} in mind, 
by density, we may carefully extend this statement to any test function $\Psi$ in some separable Hilbert space ${\cal H}$. One may convince oneself for instance that the weighted Sobolev space
$$
{\cal H} = \left\{ \Psi \in D'(\mathbb{R}^{2d}), \; \; \max_{|\beta| \le 2} \int_{\mathbb{R}^{2d}} 
e^{|z|}  | \partial^\beta_z \Psi(z) |^2 dz \right\}
$$
is suitable.
Using a polarization formula, we deduce that for $\Psi_1, \Psi_2 \in {\cal H}$,
$$
\Bigl< \myhat M(t) , \Psi_1 \Bigr> \Bigl< \myhat M(t) , \Psi_2 \Bigr> - \Bigl< \myhat V(t) \Psi_1, \Psi_2  \Bigr>,
\; \; \; t \in [0,T]
$$
defines a continuous $(\myhat {\cal F}_t)_{t \in [0,T]}$-martingale, where the operator $\myhat V(t)$ is defined through
$$
\Bigl< \myhat V(t) \Psi_1, \Psi_2  \Bigr> = \int_0^t 
\Bigl<  K[\myhat f(s)] \cdot \nabla_v \Psi_1, \myhat f(s) \Bigr>
\Bigl<  K[\myhat f(s)] \cdot \nabla_v \Psi_2, \myhat f(s) \Bigr> ds.
$$
The martingale representation theorem from \cite{da_prato} (p222, Theorem 9.2) holds for the ${\cal H}'$-valued process \myref{H_mart}, giving another probability space $(\hat \Omega, \hat {\cal F}, \hat \Proba)$ equipped with a filtration $(\hat {\cal F}_t)_{t \in [0,T]}$, and a $(\myhat {\cal F}_t \times \hat {\cal F}_t)$-brownian motion $(W_t)_{t \in [0,T]}$ on 
$\myhat \Omega \times \hat \Omega$ such that
$$
\myhat M(t) (\myhat \omega, \hat \omega) := \myhat M(t) (\myhat \omega) = - \int_0^t 
\nabla_v \cdot \Big( K[\myhat f(s)] \myhat f(s) \Big) dW_s(\myhat \omega, \hat \omega).
$$
It follows that $(\myhat \omega, \hat \omega) \in \myhat \Omega \times \hat \Omega \mapsto (\myhat f(t)(\myhat \omega))_{t \in [0,T]}$ defines a solution of \myref{chap3-spde} on $\myhat \Omega \times \hat \Omega$.
This concludes the proof of Theorem \ref{chap3-thm1}.

\end{subsection}

\begin{subsection}{Strong local alignment}
The proof of Theorem \ref{chap3-thm2} can be established using the same arguments. Indeed, the only role played by the weight function $\phi$ in the estimates of sections \ref{sec:unif_estimates} and \ref{sec:av_lemma} is through the constant $C(\phi)$ in Proposition \ref{ineq_u}, given by \myref{C_phi}. Introducing $\phi_r$ of the form
$
\phi_r(x) = r^d \phi_1(x/r)
$
we notice that 
$$ 
C(\phi_r) \propto \frac{ r^d \sup_{B(0,r_2)} \phi_1  }{ \inf_{B(0,r_1)} \phi_1} \Big( \frac{r_2}{r_1 \times r} \Big)^d
=  \frac{\sup_{B(0,r_2)} \phi_1  }{ \inf_{B(0,r_1)} \phi_1} \Big( \frac{r_2}{r_1} \Big)^d \propto C(\phi_1)
$$
uniformly in $r > 0$. As a result, introducing $f^r$ a weak solution of \myref{chap3-spde} with $\phi=\phi^r$ constructed (in law) as previously, the following estimates hold uniformly in $r > 0$: for any $\varphi \in C_c^\infty(\mathbb{R}^d)$,
\begin{align*}
& \E \Big[ \sup_{t \in [0,T]} \int_{\mathbb{R}^{2d}} |f^r(t,z)|^p + (1 + |x|^{2} + |v|^k) f^r(t,z) dz  \Big] 
+ \E \Big[ \| \rho^r_\varphi \|^2_{L^2_t H^\eta_x}  \Big]  \lesssim 1
\end{align*}
and for some $q > 1$,
\begin{align*}
& E \Big[ \| f^r(t) - f^r(s) \|^{2q}_{H^{-2}}  \Big] +  
\E \Big[ \| \rho^r(t) - \rho^r(s) \|^{2q}_{H^{-2}}  \Big]  \le |t-s|^q.
\end{align*}
From here, we may use the same arguments as previously to establish the tightness of $(f^r)_{r > 0}$  
(resp. $(\rho^r_\varphi)_{r > 0}$) in 
$C([0,T] ; H^{-\sigma}_{W^{-1}}(\mathbb{R}^{2d}))$ for $\sigma > 0$
(resp. in $L^2([0,T] ; L^2(\mathbb{R}^d))$) and then pass to the limit in the martingale problem satisfied by $f^{r_n}$ as $n$ goes to infinity.

\end{subsection}

\end{section}

\nocite{*}
\bibliographystyle{plain}
\bibliography{stochastic_MT}


\end{document}